\newtheorem{thm}{Theorem}[section]
\newtheorem{lem}[thm]{Lemma}
\newtheorem{cor}[thm]{Corollary}
\newtheorem{prop}[thm]{Proposition}
\theoremstyle{definition}  
\newtheorem{example}[thm]{Example}
\newtheorem{definition}[thm]{Definition}
\numberwithin{equation}{section}
\newcommand{\Z}{\mathbb Z}
\newcommand{\N}{\mathbb N}
\newcommand{\Aa}{{\mathcal A}}
\newcommand{\Bb}{{\mathcal B}}
\newcommand{\Ff}{{\mathcal F}}
\newcommand{\Ll}{{\mathcal L}}
\newcommand{\Pp}{{\mathcal P}}
\newcommand{\Rr}{{\mathcal R}}
\newcommand{\Ss}{{\mathcal S}}
\newcommand{\id}{{\mathbf 1}}
\newcommand{\one}{{\mathbf 1}}
\newcommand{\fl}{\mathfrak f}
\newcommand{\im}{\mbox{\rm im}}
\newcommand{\rset}{I_\theta}
\renewcommand{\a}{{\color{blue}\boxdot}}
\renewcommand{\c}{{\color{red}\boxminus}}
\renewcommand{\b}{{\color{orange}\boxplus}}
\newcommand{\Fac}{\mathrm{Fac}}
\newcommand{\Conj}{\mathrm{Conj}}
\begin{document}
\title{Almost automorphic and bijective factors of substitution shifts}

\author{Alvaro Bustos-Gajardo}
\address{ School of Mathematics and Statistics, The Open University, U.K.  
 \& Facultad de Matem\'aticas, Pontificia Universidad Cat\'olica de Chile, Santiago, Chile
}
\email{abustos@dim.uchile.cl}

\author{Johannes Kellendonk}
\address{
	 Institut Camille Jordan, Universit\'{e} Lyon-1, France
	}
\email{kellendonk@math.univ-lyon1.fr}

\author{Reem Yassawi}
\address{ School of Mathematical Sciences , Queen Mary University of London, U.K.  
}
\email{r.yassawi@qmul.ac.uk}

\thanks{This work was supported by  the
   EPSRC  grant numbers EP/V007459/2 and EP/S010335/1.  The first author received funding from ANID/FONDECYT Postdoctorado 3230159 during the final phase of this work.}

\subjclass[2020]{37B02, 37B10, 37B52,  20M10, 20M35. }

\begin{abstract}
In this article we completely characterise constant length substitution shifts which have an almost automorphic factor, or which have a bijective substitution factor. 
Our approach is algebraic: we characterise these dynamical properties in terms of a finite semigroup defined by the substitution. We characterise the existence of  almost automorphic factors in terms of Green's $\mathcal R$-relation and the existence of bijective factors in terms of Green's ${\mathcal L}$-relation. Our results are constructive. 
\end{abstract}

\maketitle

\section{Introduction}
In this article we are interested in the existence of certain factors for substitutional dynamical systems. Factors with a specific spectral behaviour tell us something about the spectrum of the original system.  In particular, we are interested in characterising when a shift possesses an almost automorphic factor, or when it possesses factors with a  singular component  in their maximal spectral type. We are also interested in factors whose Ellis semigroup we can describe. We investigate these questions for constant length substitution shifts.

More specifically,
we would like to better understand when a substitution shift $(X_\theta,\sigma)$ defined by a primitive aperiodic  length-$\ell$  substitution $\theta$ has a 
factor with the following additional properties. We say that a map is {\em almost injective } if it is somewhere injective, and a dynamical system is {\em almost automorphic} if it is an almost injective extension of a group rotation.
We are interested in characterising two different scenarios:
\begin{enumerate}
\item $(X_\theta,\sigma)$ has a non-equicontinuous factor, which is almost automorphic over the maximal equicontinuous factor of $(X_\theta,\sigma)$, and
\item $(X_\theta,\sigma)$ is an almost injective extension of  a {\em bijective} substitution shift.
\end{enumerate}

In the first scenario, looking for almost automorphic shift factors already solves the problem. This is because the substitution systems we consider have odometers as maximal equicontinuous factors, and an almost automorphic extension of an odometer is necessarily conjugate to a shift \cite[Theorem 6.4]{Downarowicz}. We also note that for the systems we study,  we find an almost automorphic factor $\pi:(X_\theta,\sigma)\rightarrow (Y,\sigma)$ if and only if  the maximal equicontinuous factor map for $(X_\theta,\sigma)$ factors through $\pi$; see Corollary \ref{cor:factor-through}.

 The interest of the second question is twofold. Firstly,
  although ``most" length-$\ell$ substitution shifts should have a singular component in their maximal spectral type, it is only for a subfamily of bijective substitutions that this has been established in some generality; see \cite{Bartlett, Queffelec}. Secondly, at the moment, bijective substitution shifts are one of the few families for which an explicit description of the Ellis semigroup exists \cite{Kellendonk-Yassawi-ESBS}.
  
We answer both questions completely using a combination of two sets of tools. The first tool is algebraic,  the  semigroup $S_\theta$ of a substitution $\theta$; see Definition \ref{def:semigroup}. This semigroup has been extensively used in the case when it is a group $G_\theta$, i.e., when the substitution is bijective. For  example,  $G_\theta$ is used  to characterise the automorphism group \cite{L-M, M-Y}, and, if it is commutative, then $(X_\theta,\sigma,\mu)$ has a singular component in its maximal spectral type \cite{Bartlett, Queffelec}. Also, it is a fundamental building block of the Ellis semigroup of a bijective substitution  \cite{Kellendonk-Yassawi-ESBS}. It is interesting that we use  Green's $\mathcal R$ relation to prove Theorem \ref{thm:characterisation-a.a.-intro}, and  Green's $\mathcal L$ relation to prove Theorems \ref{thm:bijective-inner-intro} and \ref{thm:bijective-intro}.

 The second set of tools is classical and  involves building topologically conjugate versions of $(X_\theta, \sigma)$ using {\em collaring} and {\em k-shifting} (Definitions \ref{def:collaring} and  \ref{def:shifting}). 
  In particular, collaring allows us to control the radius of a putative factor map $F:X_\theta\rightarrow Y$, and $k$-shifting allows us  to compose $F$ with a  ``translation".
  We use both these constructions to limit and manipulate possible factor maps, and this results in theorem statements that are constructive, i.e., given a length-$\ell$ substitution shift, one can explicitly determine whether or not it has an almost automorphic or bijective shift factor.

Let $\theta^{(-l,r)}$ denote the $(-l,r)$-collaring of $\theta$, which is the model of  $\theta$ that we work with if $F:X_\theta\rightarrow Y$ has left and right radius $l$ and $r$. To state our first result,  we distinguish between different families of factor maps. The simplest factor maps are {\em inner encodings}; see Definition \ref{def:inner-encoding}. Equivalently,  a factor map $F:X_\theta\rightarrow Y$ is an  inner encoding if it has radius zero, if $Y$ is substitutional,  and if fixed points are mapped to fixed points. Inner encodings arise whenever there is an  equivalence relation on the alphabet $\mathcal A_\theta$ of the substitution $\theta$ such that if $a\sim b$, then as words, $\theta(a) \sim \theta(b)$. Thus inner encodings define partitions $\mathcal P$ of $\mathcal A_\theta$.  Conversely, given a substitution $\theta$, one can  define a partition $\mathcal P_\theta$, which we call {\em the coincidence partition of $\theta$} (Definition \ref{def:partition}), and 
 which yields an inner encoding of $\theta$, called the {\em inner encoding associated to $\theta$}. This inner encoding generates an almost automorphic shift. However  one cannot guarantee that the inner encoding is aperiodic, so that its shift space is infinite. Our first result is

\begin{thm}\label{thm:characterisation-a.a.-intro}
Let $\theta$ be a  length-$\ell$, primitive aperiodic substitution, with pure base $\tilde \theta$. Then $\theta$ has an aperiodic almost automorphic shift factor if and only if the inner encoding associated to  $\tilde{\theta}^{(-1,1)}$ 
is aperiodic.
\end{thm}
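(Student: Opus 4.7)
The plan is to use Corollary~\ref{cor:factor-through}, which asserts that any almost automorphic shift factor $\pi\colon X_\theta\to Y$ has the maximal equicontinuous factor map factoring through it. Combined with collaring (Definition~\ref{def:collaring}) and $k$-shifting (Definition~\ref{def:shifting}), this reduces the problem to the study of substitution-compatible partitions of a finite alphabet.

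For sufficiency, passing to the pure base $\tilde\theta$ and then performing $(-1,1)$-collaring each yield conjugate shift systems, so it suffices to exhibit an aperiodic almost automorphic shift factor of $X_{\tilde\theta^{(-1,1)}}$. By construction the inner encoding associated to the coincidence partition $\mathcal P_{\tilde\theta^{(-1,1)}}$ is a substitutional factor that generates an almost automorphic shift, and the aperiodicity hypothesis is exactly what is needed for it to be an aperiodic shift, hence the desired factor.

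For necessity, assume $\pi\colon X_\theta\to Y$ is an aperiodic almost automorphic shift factor. Replacing $\theta$ by $\tilde\theta$ preserves both the odometer MEF and the existence of such factors. By the Curtis--Hedlund--Lyndon theorem, $\pi$ is a sliding block code of some radius $(l,r)$; after $(-l,r)$-collaring, $\pi$ becomes a letter-to-letter map $\phi$ on alphabets. Applying Corollary~\ref{cor:factor-through} together with a suitable $k$-shift, one arranges $\pi$ to send a substitutive fixed point to one, so that $\pi$ becomes an inner encoding in the sense of Definition~\ref{def:inner-encoding}, and $\ker\phi$ is a substitution-compatible partition of $\mathcal A_{\tilde\theta^{(-l,r)}}$. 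The key algebraic input is that, among all partitions producing an inner encoding which is almost automorphic over the odometer, the coincidence partition $\mathcal P_{\tilde\theta^{(-l,r)}}$ is the finest, so that every such $\ker\phi$ is a coarsening of it. Granting this, the canonical inner encoding of $\tilde\theta^{(-l,r)}$ surjects onto the aperiodic $Y$, hence is itself aperiodic. One then descends from the $(-l,r)$-collaring to the $(-1,1)$-collaring by showing that the forgetful map of alphabets intertwines the two coincidence partitions and preserves aperiodicity of the associated inner encodings.

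The principal obstacle is this final descent: one must verify that the coincidence partition is detected by one-step data, so that no richer collaring can produce a genuinely new aperiodic quotient which is invisible at the $(-1,1)$-level. We expect this to rest on a structural description of Green's $\mathcal R$-classes in the semigroup $S_\theta$ (Definition~\ref{def:semigroup}), which, as advertised in the introduction, algebraically packages precisely the coincidence data and their iteration. A secondary subtlety is the minimality claim for $\mathcal P$ among almost-automorphic-inducing partitions, which again should be read off directly from the $\mathcal R$-relation on $S_\theta$.
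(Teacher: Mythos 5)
Your skeleton (reduce any factor to an inner encoding of a collared substitution, then invoke maximality of the coincidence partition) matches the paper's, and you correctly guess that the maximality statement rests on Green's $\mathcal R$-relation (it is Theorem~\ref{thm:coincidence-existence}, proved via the canonical outer encoding and Proposition~\ref{prop:factor-through}). But the step you flag as the ``principal obstacle'' is a genuine gap, and the resolution you sketch goes in the wrong direction. For $l,r\geq 1$ the forgetful code makes $\tilde\theta^{(-1,1)}$ an inner encoding of $\tilde\theta^{(-l,r)}$, so by the very maximality you invoke, the inner encoding associated to $\tilde\theta^{(-1,1)}$ is a \emph{factor} of the one associated to $\tilde\theta^{(-l,r)}$. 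Aperiodicity does not pass from a system to its factors (a factor of an infinite system can be finite), so ``intertwining the coincidence partitions'' cannot rule out an aperiodic quotient visible only at a deeper collaring. The paper closes this hole dynamically, not algebraically: Proposition~\ref{prop:radius-1} shows that any factor map between substitution shifts which preserves the fixed point fibre has left and right radius at most $1$ (via recognisability, injectivisation of the target, and a pigeonhole argument on the finite set of fibre-preserving factor maps), and Corollary~\ref{cor:factor-bij} then says one never needs $l,r>1$ at all. Without that radius bound the theorem as stated is not reachable by your route. Relatedly, your necessity argument should not need a $k$-shift: the paper obtains fixed-point-fibre preservation for free by replacing the target $Y$ with the inner encoding $\eta_\tau$ produced by Theorem~\ref{th:factor-bijective} (which also forces passage to a pair-aperiodic power, handled by Lemma~\ref{lem-encoding-power}); $k$-shifting changes the source to $\theta^{(+k)}$, which is not a collaring, and is the tool for the bijective-factor theorems, not this one.

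A second, smaller error: passing to the pure base does \emph{not} yield a conjugate shift when the height $h>1$. The maximal equicontinuous factor of $X_{\tilde\theta}$ is $\Z_\ell$ while that of $X_\theta$ is $\Z_{\bar\ell,h}$, and $X_\theta$ is a $\Z/h\Z$-suspension of $X_{\tilde\theta}$. Both directions of the reduction to the pure base therefore need the functoriality of the suspension (Proposition~\ref{lem:almost-automorphic-height.}), and the sufficiency direction additionally needs the explicit ``splitting'' construction that rebuilds a height-$h$ almost automorphic substitution factor of $X_\theta$ from the height-$1$ factor of $X_{\tilde\theta}$. As written, your proof establishes neither of the two load-bearing reductions.
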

The beauty of this result is that it is quite simple to verify its conditions for a fixed substitution. Both Martin \cite{martin} and later Herning \cite{Herning} have worked on this question, but only in the case where $\theta$ is bijective; see below for a discussion of their results. Note that as a corollary, we can show that there exist substitution shifts for which the maximal {\em tame} factor, \cite{Glasner-Megrelishvili-2023}, equals the maximal equicontinuous factor. For, a tame factor which is not equicontinuous  must be almost automorphic \cite{Huang}, and with  
 Theorem \ref{thm:characterisation-a.a.-intro}, we can give many examples of substitution shifts with no almost automorphic factor.

Our second result characterises, in terms of the semigroup $S_\theta$, when a substitution has a bijective substitution factor. 
We make use of the fact that  $S_\theta$ admits a kernel, i.e., a minimal bilateral ideal, which is a union of minimal left ideals. 
The {\em na\"{i}ve column number} of 
$\theta$ is the rank of any element in the kernel of $S_\theta$.  We first show the following.

\begin{thm}\label{thm:bijective-inner-intro}
Let $\theta$ be  constant length substitution with 
na\"{i}ve column number $c>1$.
Then $(X_\theta, \sigma)$  factors almost injectively onto a bijective inner encoding  on a $c$-letter alphabet
if and only if
$S_\theta$ has a unique minimal left ideal.
\end{thm}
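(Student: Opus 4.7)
The plan is to translate the semigroup condition of having a unique minimal left ideal into a canonical partition of the alphabet $\Aa_\theta$, and then check that this partition defines a bijective inner encoding whose associated factor map is almost injective. Recall that in a finite transformation semigroup acting on $\Aa_\theta$, Green's $\mathcal{L}$-relation is characterised by $f \mathrel{\mathcal{L}} g$ if and only if $\ker f = \ker g$. In the completely simple kernel $K \subseteq S_\theta$, the minimal left ideals coincide with the $\mathcal{L}$-classes, so the hypothesis ``$S_\theta$ has a unique minimal left ideal'' is equivalent to saying that all elements of $K$ share a common kernel partition $\Pp_\theta$ of $\Aa_\theta$, which necessarily has $c$ classes since every element of $K$ has image of size $c$.

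For the $(\Leftarrow)$ direction, I would first verify that $\Pp_\theta$ is $\theta$-invariant. For any column map $\theta_k$ and any $f \in K$, the composition $f \circ \theta_k$ again lies in $K$ (since $K$ is a two-sided ideal), so it shares its kernel partition with $f$. This forces $a \sim_{\Pp_\theta} b \Longrightarrow \theta_k(a) \sim_{\Pp_\theta} \theta_k(b)$, and hence $\Pp_\theta$ defines an inner encoding $\theta \to \bar\theta$ on the $c$-letter alphabet $\Aa_\theta / \Pp_\theta$. Next, writing $f = \tilde f \circ \pi$ with $\pi : \Aa_\theta \to \Aa_\theta / \Pp_\theta$ the quotient map and $\tilde f$ injective, the identity $f \circ \theta_k = \tilde f \circ \bar\theta_k \circ \pi$ combined with $\ker(f \circ \theta_k) = \Pp_\theta$ forces $\bar\theta_k$ to be injective on a finite alphabet, hence bijective.

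The $(\Rightarrow)$ direction is essentially the mirror argument. Given an almost injective inner encoding $\pi : X_\theta \to X_{\bar\theta}$ on $c$ letters with $\bar\theta$ bijective and underlying partition $\Pp$, any $f \in K$ descends to $\bar f \in S_{\bar\theta}$, which is a composition of bijections and hence a bijection of the $c$-letter alphabet. So $\mathrm{Im}(f)$ projects onto every class of $\Pp$; since $|\mathrm{Im}(f)| = c$, it meets each class exactly once. Combined with the $\theta$-invariance of $\Pp$, this forces $f$ to be constant on each $\Pp$-class, so $\Pp \subseteq \ker f$, and a class count yields $\ker f = \Pp$. Hence every element of $K$ has the same kernel partition $\Pp$, so $S_\theta$ has a unique minimal left ideal.

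The hard part will be verifying almost injectivity of the factor map $X_\theta \to X_{\bar\theta}$ produced in $(\Leftarrow)$; note this property is not used in $(\Rightarrow)$. My intended approach is a fiber-counting argument: both $X_\theta \to \Xmax$ and $X_{\bar\theta} \to \Xmax$ factor through a shared maximal equicontinuous odometer, with generic fiber sizes equal to the respective na\"ive column numbers ($c$ for $\theta$ by hypothesis, and $c$ for the bijective substitution $\bar\theta$ on a $c$-letter alphabet). Multiplicativity of generic fiber sizes along $X_\theta \to X_{\bar\theta} \to \Xmax$ then forces the intermediate map to have generic fiber of size $1$, which is the desired almost injectivity. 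The delicate point is confirming $X_\theta$ and $X_{\bar\theta}$ genuinely share the same maximal equicontinuous factor (including matching heights); this should follow because inner encodings factor through the odometer by construction, but it is the place where I would expect to need the most care, and where the collaring and $k$-shifting machinery of the paper is likely to enter.
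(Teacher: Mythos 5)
Your algebraic equivalence is correct and follows essentially the same route as the paper's Theorem~\ref{thm-main-bijective-1}: the unique minimal left ideal is translated, via the identification of minimal left ideals with $\Ll$-classes in the completely simple kernel (Proposition~\ref{lem-R-im}), into a common kernel partition with $c$ classes that every column map must respect, and both directions of that argument are sound. One small caveat: your opening claim that $f\mathrel{\Ll}g$ iff $\ker f=\ker g$ in an arbitrary finite transformation semigroup is false in general; it holds in the full transformation monoid and inside a completely simple subsemigroup, which is all you actually use, but it should be stated that way.

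The genuine gap is in the almost-injectivity step, exactly where you flag uncertainty. Your fibre-counting argument needs the induced bijective substitution $\bar\theta$ to be \emph{primitive and aperiodic}: if $\bar\theta$ were periodic, $X_{\bar\theta}$ would be finite, every fibre of $X_\theta\to X_{\bar\theta}$ would be infinite, and the map could not be almost injective, so the implication $(\Leftarrow)$ would simply fail for that encoding. This does not ``follow because inner encodings factor through the odometer by construction''; that reasoning is circular, since $\Z_\ell$ is the maximal equicontinuous factor of $X_{\bar\theta}$ only once aperiodicity is known. The paper supplies the missing argument in Theorem~\ref{thm-main-bijective-2-a.e.}: if $\bar\theta$ had period $h$, then $1/h$ would be an eigenvalue of $X_{\bar\theta}$, hence of $X_\theta$; trivial height forces $h\mid\ell^N$ for some $N$, and then $\id={\bar\theta^{N+1}}_{k\ell^N}={\bar\theta^N}_0\circ\bar\theta_k=\bar\theta_k$ for all $k$, contradicting primitivity of $\bar\theta$ together with $c>1$. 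Note that this step genuinely uses primitivity, aperiodicity and trivial height of $\theta$, which are standing hypotheses in the body of the paper even though the introductory statement omits them. Once aperiodicity and $\kappa(F_\beta)=0$ (inner encodings preserve the fixed-point fibre, Lemma~\ref{lem:code-factor-kappa}) are in place, your fibre comparison is exactly the paper's: a regular fibre of $\pi_\theta$ of size $c$ surjects onto a regular fibre of $\pi_{\bar\theta}$ of size $c$, hence bijectively; no appeal to ``multiplicativity'' of generic fibre sizes is needed.
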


Note that  an almost injective factor onto a bijective substitution exists if and only if  an almost injective factor onto a bijective substitution on $c$ letters exists. Note also that if $c=1$, there is no such factor.

To drop the condition that the bijective factor  comes from an inner encoding,
let  $\theta^{(+k)}$ be the  $k$-shifted extension of $\theta$. This version of $\theta$ is especially useful when one considers factor maps $F$ which translate the  fixed points of $\theta$, from one fibre of the maximal equicontinuous factor to another, i.e., when in  Theorem \ref{thm:ethan}, $\kappa(F)\neq 0$.
We show

\begin{thm}\label{thm:bijective-intro}
Let $\theta$ be an aperiodic primitive constant length-$\ell$ substitution  with na\"{i}ve column number $c>1$ and trivial height. The following are equivalent:
\begin{enumerate} 
\item $(X_\theta,\sigma)$ factors almost injectively onto a bijective  substitution shift on an alphabet with $c$ letters. 
\item There exist $0\leq n,k\leq C$ such that the semigroup $S_{(\theta^n)^{(+k)}}$  contains a unique minimal left ideal. \end{enumerate}
\end{thm}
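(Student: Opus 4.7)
The plan is to reduce an arbitrary almost injective factor map from $(X_\theta,\sigma)$ onto a bijective substitution shift to the setting of Theorem~\ref{thm:bijective-inner-intro}, where the factor map is an inner encoding. The two standard obstructions to a factor map being an inner encoding are positive radius (addressed by collaring, or equivalently by passing to a suitable power $\theta^n$) and a non-zero translation $\kappa(F)$ of the fixed points between odometer fibres (addressed by $k$-shifting). The combined move $\theta \mapsto (\theta^n)^{(+k)}$ should therefore be enough to bring any such $F$ into the inner-encoding form, at which point Theorem~\ref{thm:bijective-inner-intro} connects the existence of the factor to the semigroup condition.

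For $(2)\Rightarrow(1)$, suppose $S_{(\theta^n)^{(+k)}}$ has a unique minimal left ideal. Since both $k$-shifts and powers of $\theta$ preserve the na\"{i}ve column number (the rank of a kernel element being determined by the maximal equicontinuous factor map, which is unaffected by these operations up to conjugacy), $(\theta^n)^{(+k)}$ still has column number $c$. Theorem~\ref{thm:bijective-inner-intro}, applied to $(\theta^n)^{(+k)}$, then yields an almost injective factor of $X_{(\theta^n)^{(+k)}}$ onto a bijective inner encoding on $c$ letters. The topological conjugacies $X_{\theta^n}\cong X_\theta$ and $X_{\theta^{(+k)}}\cong X_\theta$ transport this to an almost injective factor of $(X_\theta,\sigma)$ onto a bijective substitution shift on $c$ letters.

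For $(1)\Rightarrow(2)$, suppose $F\colon (X_\theta,\sigma)\to(X_\eta,\sigma)$ is almost injective with $\eta$ bijective on $c$ letters. The trivial height hypothesis fixes the maximal equicontinuous factor of $X_\theta$ as $\ZZ_\ell$; since $X_\eta$ is a factor, its own odometer maximal equicontinuous factor is a continuous image of $\ZZ_\ell$, and after replacing $\eta$ by a power (which yields a conjugate shift) we may assume $\eta$ has length $\ell^n$ for some $n\ge 1$, matching $\theta^n$. By Theorem~\ref{thm:ethan}, $F$ has a well-defined translation $\kappa(F)\in\ZZ_{\ell^n}$; choosing $0\le k<\ell^n$ with $k\equiv\kappa(F)$ realises $F$ as a fixed-point-preserving factor map $X_{(\theta^n)^{(+k)}}\to X_\eta$. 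After collaring to absorb the radius of $F$ (and noting that collaring does not alter the side of the argument we ultimately care about, since it is itself an inner-encoding-style conjugacy), this $F$ is an inner encoding, and Theorem~\ref{thm:bijective-inner-intro} gives the desired unique minimal left ideal of $S_{(\theta^n)^{(+k)}}$.

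The principal obstacle is establishing the uniform bound $n,k\le C$ depending only on $\theta$. Bounding $n$ requires showing that the length of any admissible bijective target $\eta$ can be chosen bounded in terms of $\theta$: this should follow from the fact that a factor of the $\ell$-odometer $\ZZ_\ell$ is an $\ell'$-odometer with $\ell'\mid \ell^n$ for some bounded $n$, combined with the finiteness of the relevant bijective substitution shifts on $c$ letters that can arise as targets. Once $n$ is bounded, the bound on $k$ is automatic since $0\le k<\ell^n$. A secondary technicality is ensuring that the collaring step in the $(1)\Rightarrow(2)$ direction does not change the semigroup property in question, i.e.\ that a collared version of $(\theta^n)^{(+k)}$ admits a unique minimal left ideal precisely when $(\theta^n)^{(+k)}$ itself does; this should be a routine transfer along inner encodings, but deserves explicit verification.
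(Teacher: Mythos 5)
Your overall architecture (absorb the radius by collaring, absorb the fixed-point translation by $k$-shifting, then invoke Theorem~\ref{thm:bijective-inner-intro}) is the right one, and your direction $(2)\Rightarrow(1)$ is essentially the paper's argument. But the direction $(1)\Rightarrow(2)$ has a genuine gap at exactly the point you flag as "the principal obstacle", and your proposed resolution of it is not the right mechanism. First, you write that one chooses $0\le k<\ell^n$ with $k\equiv\kappa(F)$ to make the factor map fixed-point-preserving. This conflates two different quantities: $\kappa(F)$ is an element of $\Z_\ell$, not a residue mod $\ell^n$, and the $\kappa$-value of the conjugacy $X_{(\theta^n)^{(+k)}}\to X_{\theta}$ is $\tfrac{k}{1-\ell^n}$ (Lemma~\ref{lem:kappa values}), not $k$. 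The numbers $\tfrac{k}{1-\ell^n}$ are all \emph{rational} $\ell$-adics, so no finite shift can cancel $\kappa(F)$ unless one first proves that $\kappa(F)$ is rational. That rationality is Proposition~\ref{prop:upper_bound_conjugacy} (resting on Theorem~\ref{th:factor-bijective} and the Coven--Quas--Yassawi result for conjugacies), and it is the essential, non-formal ingredient your proposal omits entirely.

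Second, the bound $C$ does not come from bounding the length of the target $\eta$; indeed in the paper's version the target already has length $\ell$, and $n$ plays a different role than you assign it. Writing $\kappa(F)=p/q$ in lowest terms, one needs $\ell^n\equiv 1\pmod q$ to realise $-p/q$ as $\tfrac{k}{1-\ell^n}$, so $n=\varphi(q)$; the uniform bound therefore comes from bounding the denominator $q$. This is Lemma~\ref{lem:kappa-rational}: $q\le(\ell-1)(\ell^{j}-1)$ where $j$ is the length of the shortest word in $\mathcal F_\theta$, transferred from $\mathcal F_\eta$ to $\mathcal F_\theta$ via Lemma~\ref{lem:minimal}. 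Your suggested alternative ("finiteness of the relevant bijective substitution shifts on $c$ letters that can arise as targets") is not established and is not obviously true as stated, since bijective substitutions on $c$ letters of unbounded length are a priori candidates. Finally, two smaller points: the integer part of $\kappa(F)$ is absorbed by composing with a power of $\sigma$ (so that Theorem~\ref{thm-main-bijective-2} applies), and your "secondary technicality" about collaring is indeed routine — it is Lemma~\ref{lem:semigroup-collaring} — but note that the collaring is already built into Theorem~\ref{thm-main-bijective-2}, which handles arbitrary-radius factor maps preserving the fixed point fibre, so it need not be redone here.
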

Moreover, $C$  can be explicitly obtained. In general it is doubly exponential in $\ell$; see the statement of Theorem \ref{thm-bijective-3}.

We discuss prior work concerning Theorem \ref{thm:characterisation-a.a.-intro}. The original result is by Martin \cite{martin}.  
He couched his work in terms of Veech's result on almost isometric systems  \cite[Theorem 7.2]{Veech}, which roughly speaking, 
says that any system with a residual set of distal points can be realised as an inverse limit of alternating isometric and almost automorphic extensions.
 Constant length substitution shifts satisfy the requirements of Veech's theorem, and given their low complexity,  a first natural question arises, which is whether such a shift $(X_\theta,\sigma)$ is already an isometric extension of an almost automorphic shift. Martin investigated this question with the assumption that  $\theta$ is  bijective, i.e., $S_\theta=G_\theta$ is a group. Inside one of his proofs (\cite[Lemma 8.05]{martin}), he made an additional assumption, which is that  any factor map can be assumed to fix fibres with respect to the maximal equicontinuous factor. This renders his work incomplete. Martin goes on to give a necessary and sufficient condition  for the existence of an almost automorphic  factor for a bijective shift, in  \cite[Lemma 8.08]{martin}, and his condition (A) in fact translates to our condition that the minimal sets for the collared $\theta^{(0,1)}$ form a partition. This is almost our statement in 
Theorem \ref{thm:characterisation-a.a.-intro}, although for bijective substitutions the situation may be simpler.

Later, in his thesis, Herning  \cite{Herning}  re-approaches this question; it seems he was unaware of Martin's work. He cites a question of Michael Baake, who asks whether 
any substitution shift  has a subshift factor that is metrically isomorphic to its Kronecker factor, which for substitutions coincides with the maximal equicontinuous factor.  Herning answers this question in the negative by finding  bijective substitutions  that do not have an almost automorphic shift factor. In  \cite[Theorem 4.24]{Herning}, Herning characterises   bijective length-$p$ substitutions of prime length that have an almost automorphic factor, and his work, although more restrictive, does not make the omission that Martin does. Once translated, his characterisation is very similar to  ours.
Our Theorem \ref{thm:characterisation-a.a.-intro} extends these results to characterise when  any primitive length-$\ell$ substitution, not just bijective,  has an almost automorphic factor. 
To complete the connection to Veech's theorem, we mention the elegant result of Lemanczyk and M\"{u}llner \cite{Lemanczyk-Muellner}, notably that any constant length substitution shift has an almost injective extension which has an almost automorphic factor. 

To our knowledge, the question of the existence of bijective factors has not been addressed in the literature.

We summarise the contents of this paper. In Section \ref{Section-basic}, we set the background, fix notation, and define collared, shifted and inner encoded substitutions. We then limit the radii of factor maps for a length-$\ell$ substitution, extending techniques that exist in the literature for invertible factor maps. In Section \ref{sec:-almost-automorphic-result}, we set the stage to prove Theorem \ref{thm:characterisation-a.a.-intro}. To do this, we define {\em outer encodings} of a substitution. These are substitutions whose semigroup is an epimorphic image of $S_\theta$. As such, they do not necessarily give rise to dynamical factors. However,  we show in Proposition \ref{prop:factor-through} that any automorphic factor must factor through the {\em canonical outer encoding}; see Definition \ref{eq:def-rho}. This is the key tool to prove Theorem \ref{thm:characterisation-a.a.-intro}, once we bound possible factor maps using the tools from Section \ref{Section-basic}. In Section \ref{sec:bijective-1}, we prove Theorem
\ref{thm:bijective-intro} in three successive steps. First we prove Theorem \ref{thm:bijective-inner-intro}, which characterises when a substitution has a bijective factor arising from an inner encoding, i.e., a radius zero factor map which sends fixed points to fixed points. Next, in Theorem \ref{thm-main-bijective-2}, we give a characterisation while relaxing the condition that the factor map has radius 0. Finally, in Theorem \ref{thm-bijective-3}, we eliminate the condition that the factor map send fixed points to fixed points. We strive to isolate the requirements on the substitution, noting that generally, purely algebraic results do not need the restriction to primitive aperiodic substitutions. We illustrate with examples throughout.

\section{Preliminaries}\label{Section-basic}
\subsection{Constant length substitutions}\label{sec:basic-def}
A  {\em  length-$\ell$ substitution $\theta$}  is an ordered collection of $\ell$ maps, the so-called column maps $\theta_m:\Aa\to\Aa$, $m=0,\cdots,\ell-1$, 
on a finite set $\mathcal A$, its alphabet.\footnote{Since we do not consider substitutions which are not of constant length, all substitutions will be understood to have constant length.} The substitution
$\theta$ can be understood as a map which associates to a letter $a\in \Aa$ the word $\theta(a):=\theta_0(a)\cdots\theta_{\ell-1}(a)$ and to a word $a_1\cdots a_k$ the    word
\begin{equation}\label{eq-concat}\theta(a_1\cdots a_k) = \theta(a_1)\cdots \theta(a_k),
\end{equation}
of length $k\ell$,
and to the bi-infinite sequences $\cdots u_{-2} u_{-1} u_0 u_1 \cdots$  the bi-infinite sequence  
\[\theta   (\cdots u_{-2} u_{-1} u_0 u_1 \cdots ) :=   \cdots \theta (u_{-2}) \theta (  u_{-1}    ) \theta  (u_{-1})\cdot   \theta (     u_{0})  \theta (    u_{1}) \cdots \, .\]
Here the $\cdot$ indicates the position between the negative indices and the nonnegative indices. 
Powers of $\theta$ (iterated compositions of $\theta$ with itself) are again substitutions and we write ${\theta^k}_m$ for the $m+1$-st map of $\theta^k$. 

A bi-infinite sequence $u$ is {\em $\theta$-periodic } if $\theta^k(u)=u $ for some $k\geq 1$. If $k=1$ then we say that $u$ is a {\em fixed point}.
By taking a power of $\theta$ if necessary, we will assume that each $\theta$-periodic point is $\theta$-fixed.
We say that a finite word is {\em allowed} for $\theta$ if it appears somewhere in $\theta^k(a)$ for some $a\in \Aa$ and some  $k\in\N$.
The {\em substitution shift} $( X_\theta,  \sigma)$ is the dynamical system where the space $X_\theta$  consists of all bi-infinite sequences 
all of whose subwords are allowed for $\theta$.  
We equip $X_\theta$ with the subspace topology of the product topology on $\Aa^\Z$, making the left shift map $\sigma$  a continuous  $\Z$-action.

 In this article 
our techniques are a combination of algebraic arguments involving finite semigroups, and dynamical techniques applied to the dynamical system $(X_\theta, \sigma)$ generated by $\theta$. For the algebraic arguments, very few constraints are imposed on $\theta$. For the dynamical arguments, we
 collect the various properties of substitutions which will play a role.
\begin{itemize}
\item {\em Primitivity.}
We say that $\theta$ is {\em primitive} if there is some 
$k\in \mathbb N$ such that for any $a,a'\in \mathcal A$,
the word $\theta^k(a)$ contains at least one  occurrence of $a'$. For dynamical arguments, substitutions will mostly be assumed primitive. 
Primitivity  of $\theta$ implies that  $X_\theta$ is the shift-orbit closure of any $\theta$-periodic point, and $(X_\theta,\sigma)$ is minimal. If $\theta$ is primitive, then $X_\theta=X_{\theta^n}$ for each $n\in \N$.
 Thus, by considering a power of $\theta$ if necessary, we will assume that all $\theta$-periodic points are $\theta$-fixed.
  \item {\em Aperiodicity.}    
We say that $\theta$ is {\em aperiodic} if $X_\theta$ does not contain any $\sigma$-periodic sequences. This is the case if and only if $X_\theta$ is an infinite space. 
\item {\em Bijectivity.}   
 We say that  $\theta$ is {\em bijective} if all column maps $\theta_m:\Aa\to \Aa$ are bijective.
\end{itemize}

\subsection{The maximal equicontinuous factor of a length-$\ell$ substitution}

Let $\Z_\ell$ denote the  $\ell$-adic integers, i.e., the inverse limit of cyclic groups $ \varprojlim \Z/\ell^n\Z $.  Let  $\Z_{\bar{\ell},h} :=  \varprojlim \Z/\ell^nh\Z $  and let $1 := (\cdots, 0,0,1)$; addition in $\Z_{\bar{\ell},h}$ is performed with carry.   
If $\theta$ is primitive and aperiodic, then Dekking's theorem \cite{dekking} tells us that   $(\Z_\ell, +1)$ is an equicontinuous factor of $(X_\theta, \sigma)$. Furthermore, 
there is an $h$, with $0<h<\ell$, with $h$ coprime to $\ell$, such that 
 $(\mathbb{Z}_{\bar{\ell},h}, +1)$, is  the maximal equicontinuous factor of  $(X_\theta, \sigma)$. The integer $h$ is called the {\em height} of $\theta$, and   we say that  $\theta$ has {\em trivial height} if $h=1$.

We fix the factor map $\pi:X_\theta\rightarrow \Z_\ell$ from a primitive aperiodic length-$\ell$ substitution shift $(X_\theta,\sigma)$ to $(\Z_\ell,+1)$ with which we work in this article. We will specify it by requiring $\pi(u)=0$ if and only if $u$ is a $\theta$-fixed point. We refer the reader to \cite{dekking} for details.

Given the substitution $\theta$, the substitution $\theta^n$ is a length-$\ell^n$ substitution. If $0\leq j \leq \ell^n -1$, we use ${\theta^n}_j$ to denote its $j$-th column map.
The {\em na\"{i}ve column number} of a substitution $\theta$  is defined as the minimal number of distinct letters in the image of a column map of $\theta^{n}$, for some $n$. In other words,
\begin{equation} \label{eq:dolumnnumber}c=c (\theta) := \inf_{j,n} \left\{\lvert {\theta^n}_j(\mathcal A)\rvert  : 0\le j <  \ell^n\right\}.\end{equation}

We say that $\theta$ has a {\em coincidence} if $c=h$. In this case, $(X_\theta,\sigma)$ is {\em almost automorphic}, i.e., an almost injective extension of its maximal equicontinuous factor.  For details, see \cite{dekking}. We remark that our notion of column number is different to Dekking's original definition; however it is in this paper more useful, as it is also in \cite{Lemanczyk-Muellner}, where Lemanczyk and Muellner  show that $h$ divides $c(\theta)$, and that $(X_\theta, \sigma)$ is a somewhere $c (\theta)$-to-one extension of $(\Z_\ell, +1)$. Dekking's definition of the column number of $\theta $ coincides with our definition of na\"{i}ve column number if $\theta$ has trivial height; in this case we will drop the adjective  na\"{i}ve.

\subsection{Collared and shifted substitutions}

  It will be necessary to consider collared substitutions and $k$-shifted substitutions of the substitution $\theta$, which yield shifts that are  topologically conjugate to $(X_\theta, \sigma)$.  The notation we use in the following definition will be useful when we consider recasting factor maps of left radius $l\geq 0$ and right radius $r\geq 0$ as codings.
\begin{definition}\label{def:collaring} 
Let $n=(-l,r)$ where $l,r \geq 0$.
The {\em  $n$-collared extension of $\theta$} is the substitution $\theta^{(n)}$ of the same length whose alphabet consists of the allowed $r+1+l$-letter words of $\theta$ and which is given as follows. Given an allowed word $a_{-l}\ldots a_{r}$ compute $ a'_{-\ell l} \ldots a'_{\ell(r+1)-1 }:=\theta (a_{-l}\ldots a_{r})$ and set
\[ {\theta^{(n)}}_m (a_{-l}\ldots a_{r}) := a'_{m-l}\ldots a'_{m+r}  .\]
 \end{definition}
 If we take $l=r=0$ then we obtain $\theta^{(n)}=\theta$. If $l=0$ then $\theta^{(n)}$ is the so-called $r$-sliding block representation of $\theta$; see
\cite[Section 5.4]{Queffelec}.

\begin{definition}\label{def:shifting} 
Let $0\leq k\leq \ell-1$. The {\em  $k$-shifted extension of $\theta$} is the substitution $\theta^{(+k)}$ of the same length whose alphabet consists of the allowed $2$-letter words of $\theta$ and which is given as follows. Given an allowed word  $a_0 a_{1}$, write $\theta(a_0a_1)=a'_0 \ldots a'_{2\ell-1 }$ and set
\[ {\theta^{(+k)}}_m (a_0 a_{1} ) := a'_{m+k} a'_{m+k+1}    .\]
 \end{definition}
We have $\theta^{(+0)}=\theta^{(0,1)}$.

\subsection{Factors, codes and encoded substitutions}
We say that a shift $(Y,\sigma)$ is a {\em dynamical factor} of the shift $(X, \sigma)$ if there is a continuous, surjective
map $F:X \rightarrow Y$ (the factor map) which intertwines the shifts, $F\circ\sigma = \sigma\circ F$.

The Curtis-Hedlund-Lyndon theorem states that a factor map $F: X\rightarrow Y$ between two shifts, $X\subset \mathcal A^\Z$, $Y\subset \mathcal B^\Z$ is defined by a local rule, that is, given integers $l\geq 0$ and $r\geq 0$ there is a
a surjective map 
$\varphi:\mathcal A^{r+1+l}\rightarrow \mathcal B$, so that 
$$(F(x))_{n}= 
\varphi (x_{n-l}, \dots , x_{n+r})       $$ 
for each $n\in \Z$. The quantities 
 $l, r$        are called the left and right radius of $F$ respectively. If $F$ is radius zero, 
 i.e., $l=r=0$, then  the local rule 
$\varphi:\mathcal A\rightarrow \mathcal B$
of $F$ is called a {\em code}. We use uppercase letters to denote factor maps, and lowercase greek letters to denote local rules. If we are given a local rule $\varphi$ which defines a factor map, we will denote it by $F_\varphi$.

\begin{definition}\label{def:inner-encoding}
Let $\eta$ and $\theta$ be length-$\ell$ substitutions.
We say that $\eta$ is an {\em inner encoding} of $\theta$ if there exists a surjective map
$\beta:\mathcal A_\theta\rightarrow \mathcal A_\eta$ which intertwines the column maps of the substitutions, i.e.,
\begin{center}\begin{tikzcd}

              \mathcal{A}_\theta \arrow[r, "\theta_m"]\arrow[d,"\beta"'] & \mathcal{A}_\theta\arrow[d,"\beta"] \\

              \mathcal{A}_\eta \arrow[r,"\eta_m"] & \mathcal{A}_\eta

\end{tikzcd}\end{center}
commutes.

\end{definition}
If we need more precision then we denote the inner encoding also by the pair $(\eta,\beta)$.
  If $(\eta,\beta)$ is an inner encoding of $\theta$, then $\beta$ is the code of a factor map $F_\beta:X_\theta \rightarrow X_\eta$. However, given an arbitrary code  $\beta:\mathcal A_\theta\rightarrow \mathcal A_\eta$ for a factor map $F_\beta:X_\theta \rightarrow X_\eta$, it will in general not intertwine the substitutions as above. Indeed, the code
  $$\Aa_{\theta^{(+1)}}\ni a_0 a_{1} \stackrel{\tau}\mapsto a_{1}\in\Aa_\theta $$
gives rise to a factor map $F_\tau:X_{\theta^{(+1)}}\to X_\theta$ which is injective and therefore defines a conjugacy, but 
 $\theta$ is not an inner encoding of $\theta^{(+1)}$.

\begin{lem}\label{lem:right-kappa-value}
Consider a  substitution $\theta$ with its $n= (-l,r)$-collaring $\theta^{(n)}$. Let $\imath:\Aa_{\theta^{(n)}}\to \Aa_\theta$ be given by
\begin{align}\label{eq:iota}
 \imath(a_{-l}\ldots a_{r})       := a_0 .\end{align}
Then $(\theta,\imath)$ is an inner coding of $\theta^{(n)}$, and the  factor map $F_\imath$ is injective.
\end{lem}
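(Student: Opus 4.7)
The plan is to split the argument into two pieces: first verify that $\imath$ intertwines the column maps (so that $(\theta,\imath)$ is genuinely an inner encoding), and then verify that $F_\imath$ is injective. Surjectivity of $\imath$ onto $\Aa_\theta$ is automatic, since any $a\in\Aa_\theta$ appears as the central letter of at least one allowed $(r{+}1{+}l)$-word (under primitivity, for instance $a$ sits in the middle of a sufficiently long word $\theta^k(b)$), so $\imath$ is surjective and the diagram in Definition \ref{def:inner-encoding} is the only nontrivial point.

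For the intertwining, the plan is a direct unravelling of Definitions \ref{def:collaring} and of the concatenation rule \eqref{eq-concat}. Let $w=a_{-l}\ldots a_{r}\in\Aa_{\theta^{(n)}}$ and write $a'_{-\ell l}\ldots a'_{\ell(r+1)-1}=\theta(a_{-l})\cdots\theta(a_{r})$. Within this concatenation the block $\theta(a_0)=\theta_0(a_0)\cdots\theta_{\ell-1}(a_0)$ occupies indices $0,\ldots,\ell-1$, so $a'_m=\theta_m(a_0)$ for $0\le m\le \ell-1$. Therefore
\[
\imath\bigl({\theta^{(n)}}_m(w)\bigr)=\imath(a'_{m-l}\ldots a'_{m+r})=a'_m=\theta_m(a_0)=\theta_m\bigl(\imath(w)\bigr),
\]
which is exactly the commuting square required by Definition \ref{def:inner-encoding}.

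For injectivity, the plan is to use the standard ``windowing'' correspondence between $X_\theta$ and $X_{\theta^{(n)}}$. Define $\Psi:X_\theta\to X_{\theta^{(n)}}$ by $\Psi(y)_n=y_{n-l}\ldots y_{n+r}$; a short check from the definition of $\theta^{(n)}$ (every allowed word of $\theta^{(n)}$ is a sequence of consecutive $(r{+}1{+}l)$-windows inside an allowed word of $\theta$, because the images ${\theta^{(n)}}_m(w)$ are built by sliding a window through $\theta(w)$) shows $\Psi$ is well defined and surjective onto $X_{\theta^{(n)}}$. By construction $F_\imath\circ\Psi=\mathrm{id}_{X_\theta}$, since reading off the central letter of the window $y_{n-l}\ldots y_{n+r}$ returns $y_n$. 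This forces $\Psi$ to be injective too, so $\Psi$ is a conjugacy and its inverse $F_\imath$ is bijective; in particular $F_\imath$ is injective.

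The only delicate point, and hence the main ``obstacle,'' is checking that $\Psi$ really lands in, and surjects onto, $X_{\theta^{(n)}}$, i.e.\ that a bi-infinite sequence of $\theta$-allowed $(r{+}1{+}l)$-words whose consecutive entries overlap correctly is precisely an element of the substitution shift $X_{\theta^{(n)}}$. This is a standard fact about sliding-block codes applied to substitutions, and can be verified by induction on $k$ using the identity ${\theta^{(n)}}^k(w)$ = sliding-window decomposition of $\theta^k(w)$; I will record it in a short self-contained paragraph rather than citing it, to keep the lemma's proof watertight.
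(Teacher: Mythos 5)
Your argument is correct and is exactly the ``direct computation'' that the paper's one-line proof leaves to the reader: the index bookkeeping showing $a'_m=\theta_m(a_0)$ gives the intertwining, and the sliding-window map $\Psi$ with $F_\imath\circ\Psi=\mathrm{id}_{X_\theta}$ together with surjectivity of $\Psi$ gives injectivity of $F_\imath$. You correctly isolate the only point needing care --- that consecutive letters of any point of $X_{\theta^{(n)}}$ overlap consistently, equivalently that $\Psi$ surjects onto $X_{\theta^{(n)}}$ --- and the inductive identification of $({\theta^{(n)}})^k(w)$ with the window decomposition of $\theta^k(w)$ that you sketch does close that gap.
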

\begin{proof} Direct computation. \end{proof}

We say that a factor map $F:X_\theta\to X_\eta$ between two substitution shifts of equal length {\em preserves the fixed point fibre} if it maps the fixed points of $\theta$ to fixed points of $\eta$.

\begin{lem}\label{lem:code-factor-kappa}
Let $\eta$ and $\theta$ be  length-$\ell$ substitutions  and let $\beta:\mathcal A_\theta\rightarrow \mathcal A_\eta$ be a code. If
$(\eta,\beta)$ is an inner encoding of $\theta$ then the factor map $F_\beta$ preserves the fixed point fibre.
Hence if $(\eta,\beta)$ is an aperiodic inner encoding of $\theta$, and if $\theta$ has trivial height, then the maximal equicontinuous factor map of $(X_\theta,\sigma)$ factors through $F_\beta$.
 If $\theta$ is primitive then the converse is true as well.
\end{lem}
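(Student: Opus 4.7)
My plan is to handle the three implications in sequence.

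\textbf{For the forward direction from inner encoding to fixed-point preservation,} I would unfold the commuting square $\beta\circ\theta_m=\eta_m\circ\beta$ pointwise, showing it extends to $F_\beta\circ\theta=\eta\circ F_\beta$ as maps $X_\theta\to X_\eta$: at position $k\ell+m$, both sides collapse to the intertwining identity evaluated at the letter $u_k$. Applied to a $\theta$-fixed $u$, this yields $\eta(F_\beta(u))=F_\beta(\theta(u))=F_\beta(u)$, placing $F_\beta(u)$ in the $\eta$-fixed set.

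\textbf{For the implication to the MEF factorisation,} surjectivity of $F_\beta$ forces $\beta$ to be surjective, and the intertwining carries primitivity from $\theta$ to $\eta$. Together with aperiodicity of $(\eta,\beta)$, Dekking's theorem supplies $X_\eta$ with its own MEF $\pi_\eta:X_\eta\to\mathbb Z_{\bar\ell,h_\eta}$. The composition $\pi_\eta\circ F_\beta$ is then an equicontinuous factor of $X_\theta$, and by maximality of $\pi_\theta$ it factors as $g\circ\pi_\theta$ for some continuous equivariant $g:\mathbb Z_\ell\to\mathbb Z_{\bar\ell,h_\eta}$. Equivariance together with the first step pin $g(0)=\pi_\eta(F_\beta(u))=0$ (with $u$ any $\theta$-fixed point), so $g(n)=n$ on $\mathbb Z$; the main technical point is that any continuous equivariant extension of $n\mapsto n$ to $\mathbb Z_\ell$ forces each $n_k\to 0$ in $\mathbb Z_\ell$ to also satisfy $h_\eta\mid n_k$ eventually, which since $\gcd(\ell,h_\eta)=1$ is possible only when $h_\eta=1$. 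Hence $\pi_\eta:X_\eta\to\mathbb Z_\ell$, $g=\mathrm{id}$, and $\pi_\theta=\pi_\eta\circ F_\beta$ realises the desired factorisation through $F_\beta$.

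\textbf{For the converse under primitivity of $\theta$,} I would prove the converse of the first implication directly: if $F_\beta$ preserves the fixed-point fibre then $(\eta,\beta)$ is an inner encoding. Pick any $\theta$-fixed $u$; by hypothesis $w:=F_\beta(u)$ is $\eta$-fixed, so $\eta(w)=w=F_\beta(u)=F_\beta(\theta u)$. Reading off position $k\ell+m$ gives $\eta_m(\beta(u_k))=\beta(\theta_m(u_k))$, and primitivity of $\theta$ (so that $X_\theta$ is minimal) guarantees the letters $\{u_k:k\in\mathbb Z\}$ exhaust $\mathcal A_\theta$; the inner-encoding relations $\eta_m\circ\beta=\beta\circ\theta_m$ therefore hold on every letter and every column. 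The converse at the level of the MEF then reduces to this, once one observes that a factorisation $\pi_\theta=\tilde\pi\circ F_\beta$ places $F_\beta(\theta\text{-fixed})$ inside $\tilde\pi^{-1}(0)$, which after identifying $\tilde\pi$ with $\pi_\eta$ via the $h_\eta=1$ argument above is just the $\eta$-fixed set.
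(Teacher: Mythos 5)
Your proof is correct and follows essentially the same route as the paper: the forward direction by applying the intertwining $F_\beta\circ\theta=\eta\circ F_\beta$ to a fixed point, the middle implication by composing with the MEF map of $X_\eta$ (which the paper dismisses as ``immediate'' but you rightly spell out, including why $h_\eta=1$), and the converse by reading off the letter-level identity $\eta_m(\beta(u_k))=\beta(\theta_m(u_k))$ along a fixed point and using primitivity to exhaust the alphabet. The only caveat is your closing remark about a ``converse at the level of the MEF'': identifying $\tilde\pi$ with $\pi_\eta$ is canonical only up to a rotation constant, but that statement is not actually part of the lemma (whose converse concerns only the first implication), so nothing in the required argument is affected.
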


\begin{proof} {Recall that we can assume, by going over to a power of the substitution if needed, that all $\theta$-periodic points of $\theta$ are fixed.} Let $v.u$ be a fixed point of $\theta$ and hence also of $\theta^m$ for any $m\geq 1$. We denote by
$u=u_0 \ldots$ and $\ldots v_{-1} = v$ the right and left infinite parts.

Suppose that $\eta$ is inner encoded by $\beta$, that is, for each $n$ and $a$, $\beta(\theta^n(a))= \eta^n(\beta(a))$. Then
\begin{align*}F_\beta(v.u) = \lim_n \beta \theta^n(v_{-1}.u_0) &= \lim_n \eta^n(\beta(v_{-1}).\beta(u_0)) \\&=
\eta( \lim_n \eta^n(\beta(v_{-1}).\beta(u_0))) = \eta(F_\beta(v.u)).\end{align*}

The second statement is now immediate.

Conversely, suppose that $F_\beta$ sends fixed points to fixed points, so that $F_\beta(v.u)$ is a fixed point of $\eta^n$ for any $n\geq 1$. If $m < \ell^n$, then $\beta$ maps the $m$-th letter of $u$, which is ${\theta^n}_m(u_0)$, to the $m$th letter of $\beta(u)$, which is ${\eta^n}_m(\beta(u_0))$.  In particular, $\beta$ maps $\theta_{m}(\theta_k(u_0))$ to $\eta_{m}(\eta_k(\beta(u_0))) = \eta_{m}(\beta(\theta_k(u_0)))$. If $\theta$ is primitive all letters arise as ${\theta^N}_k(u_0)$ for some $N$ and $k$ showing that 
$\eta_n(\beta(a)) = \beta(\theta_n(a))$ for all $a\in\Aa_\theta$.
\end{proof}


Any map $\beta:\mathcal A\rightarrow \mathcal B$ between sets defines a partition
$ \Pp_\beta= \{ \beta^{-1}(b) : b\in \mathcal B\}$. We call $ \Pp_\beta$ the {\em partition associated to $\beta$}. 
\begin{lem}\label{lem-code}
Let $\theta$ be a length-$\ell$ substitution on $\mathcal A$.
\begin{enumerate}
\item
If $(\eta,\beta)$ is an inner coding of $\theta$ then the partition $\mathcal P_{\beta}$ associated to $\beta$ satisfies
\begin{align*}\forall m  \, \forall A\in \Pp_\beta\, \exists B\in\Pp_\beta \mbox{ such that } \theta_m(A)\subset B.\end{align*}
\item \label{item-cond}
Conversely, if
 there is a partition $\Pp$ of  $\Aa$ such that
$$\forall m  \, \forall A\in \Pp\, \exists B\in\Pp \mbox{ such that } \theta_m(A)\subset B,$$
then the canonical projection $\beta:\Aa\to \Pp$ defines an inner coding 
$(\eta, \beta)$ of $\theta$ through $\eta_m := \beta \theta_m\beta^{-1}$.
\end{enumerate}
\end{lem}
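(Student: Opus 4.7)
For part (1), the plan is a direct unpacking of the intertwining diagram in Definition \ref{def:inner-encoding}. Given an inner encoding $(\eta,\beta)$, I have $\beta\circ\theta_m = \eta_m\circ\beta$ for every $m$. Fix $m$ and $A\in\Pp_\beta$, so $A = \beta^{-1}(b)$ for some $b\in\Aa_\eta$. For any $a\in A$ one computes $\beta(\theta_m(a)) = \eta_m(\beta(a)) = \eta_m(b)$, a single element of $\Aa_\eta$. Hence $\theta_m(A)\subset \beta^{-1}(\eta_m(b))=:B\in\Pp_\beta$.

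For part (2), the plan splits into two steps. First I will show that $\eta_m := \beta\theta_m\beta^{-1}$ makes sense as a map $\Pp\to\Pp$. Concretely, for $A\in\Pp$ the hypothesis gives some $B\in\Pp$ with $\theta_m(A)\subset B$, and I set $\eta_m(A):=B$. This $B$ is uniquely determined because $\Pp$ is a partition, so $\eta_m$ is a well-defined map. Equivalently, for $a,a'\in A$ the images $\theta_m(a),\theta_m(a')$ lie in the same block, hence $\beta(\theta_m(a))=\beta(\theta_m(a'))$, which is exactly the well-definedness required for the formula $\eta_m = \beta\theta_m\beta^{-1}$. Second, I need to verify that the square in Definition \ref{def:inner-encoding} commutes: for $a\in\Aa$, letting $A=\beta(a)$, one has $\eta_m(\beta(a))=\eta_m(A)=B$, and $B$ also equals $\beta(\theta_m(a))$ since $\theta_m(a)\in B$. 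Thus $\eta_m\circ\beta = \beta\circ\theta_m$, and $(\eta,\beta)$ is an inner encoding.

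I do not foresee any obstacle here; the lemma is essentially the translation between the categorical statement (existence of an equivariant surjection of alphabets) and the combinatorial statement (existence of a partition preserved setwise by all column maps). The only mildly delicate point is articulating well-definedness of $\eta_m$ cleanly, and that is handled by the uniqueness of the block $B$ containing $\theta_m(A)$.
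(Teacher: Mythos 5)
Your proposal is correct and follows essentially the same route as the paper: part (1) unpacks the intertwining relation to show $\beta\theta_m$ is constant on each block, and part (2) defines $\eta_m$ on blocks via the unique block containing $\theta_m(A)$ and checks the square commutes. The paper's proof is just a more compressed version of the same argument.
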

\begin{proof}
Suppose that $(\eta,\beta)$ is an inner coding of $\theta$. 
Then for all $m$ we have
$\eta_m\circ \beta = \beta\circ \theta_m$.
This implies that for all $b\in\Bb$, $\beta\theta_m$ has the same value on all elements of $\beta^{-1}(b)$.  In other words for any $A\in\Pp_\beta$ we have that $\beta\theta_m(A)$ is a singleton, and this implies that $\theta_m(A)$ must be a subset of a member of  $\Pp_\beta$. 

As for the converse, if $\Pp$ is a partition with the required property then we can define $\Bb:=\Pp$, the code  $\beta:\Aa \to \Bb$ to be the map  that sends $a\in\Aa$ to  the member of $\Pp$ to which it belongs  and for $b\in\mathcal B$, $\eta_m(b)$ is defined to be the member of $\Pp$ which contains $\theta_m(b)$, i.e.,  $\eta_m := \beta \theta_m\beta^{-1}$.
\end{proof}
{If $\mathcal P$ satisfies \eqref{item-cond}  of Lemma \ref{lem-code}, } we call the associated inner encoding  the {\em inner encoding defined by the partition $\Pp$}.

Although not all codes give rise directly to inner encoded substitutions, they induce an inner encoded substitution in the following way:
Let $\theta$ be a substitution on the alphabet $\Aa$ and $\tau:\Aa\to\Bb$ be a code.  
If $\Pp_\tau$ does not satisfy Condition  (\ref{item-cond}) of Lemma~\ref{lem-code}, we can define a finer partition $\tilde\Pp_\tau$ which has this property, notably through the equivalence relation $a\sim b$ if $\forall n\geq 0,0\leq m<\ell^n$, $\tau({\theta^n}_m(a))=\tau({\theta^n}_m(b))$. We denote the associated inner encoded system by $(\eta_\tau,\beta_\tau)$ and call it the {\em inner encoding defined by $\tau$}. The alphabet of $\eta_\tau$ is usually smaller than $\Aa$ and usually larger 
than $\Bb$. There is thus a code $\tau':\Aa_\eta\to\Bb$ which satisfies $\tau=\tau'\circ\beta_\tau$. 
This is summarised in the commuting Figure~\ref{fig:induced-code}.
 \begin{figure}
  \begin{tikzcd}
    \Aa_{\theta} \arrow{r} {\beta_\tau} \arrow[swap]{d}{\tau} &   \Aa_{\eta_\tau} \arrow{dl}{\tau'}     
    \\                
          \Bb &
  \end{tikzcd}
 \caption{The commuting diagram of codes defined by a code $\tau$ and its inner encoding $(\eta_\tau,\beta_\tau)$ .}
 \label{fig:induced-code}
\end{figure}
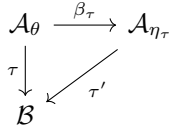

\subsection{Factors of substitution shifts }

Our aim is to show that, up to conjugacy, any dynamical factor of  any primitive, aperiodic, 
constant length substitution ${\theta}$ of trivial height is an inner encoding of a collaring 
$\theta^{(n)}$ of $\theta$, where $n=(-l,r)$ with $l,r\leq 1$. 
Later we will see using   
the suspension construction how the result transposes to the case of any height.

We first state in this context Theorem  \ref{th:factor-bijective},
which is \cite[Theorem 22]{M-Y}, and which we use extensively. It focuses on shift factors which are given by a code. Such shift factors are also called {\em $\ell$-automatic}. 
While
the statement of \cite[Theorem 22]{M-Y} does not mention inner encoded substitutions, a look at its proof shows that the substitution referred to in that theorem is exactly the inner encoded substitution defined by the code. 
Theorem  \ref{th:factor-bijective} uses the additional assumption of pair aperiodicity which we explain. 

We call the letters $a, b$ a {\em periodic pair} if there exists $p=p(a,b)$ and $\ell^p$ such that  $0\leq m<\ell^{p}$ such that  ${\theta^p}_m(a)=a$ and $ {\theta^p}_m(b)=b$.
 We define
 \begin{equation}\label{eq:pair-periodic}
p(\theta):=\mbox{lcm}\{p(a,b): a, b \mbox{ are a periodic pair} \},\end{equation}
and call a substitution $\theta$ with $p(\theta)= 1$ {\em  pair-aperiodic}. $\theta^{p(\theta)}$ is always pair-aperiodic
 \cite{M-Y}. 

\begin{thm}\label{th:factor-bijective}
Let $\theta$ be an aperiodic, pair aperiodic, primitive, length-$\ell$ substitution on $\mathcal A_{\theta}$, of trivial height. 
Let $F_\tau:X_\theta\to Y\subset \Bb^\Z$ be the factor map {defined by a code} $\tau:\mathcal A_\theta \rightarrow \mathcal B$. Consider the inner coding $(\eta_\tau,\beta_\tau)$ of $\theta$ defined by $\tau$ with its accompanying code $\tau':\Aa_{\eta_\tau}\to \Bb$. 
The induced  shift-commuting map $F_{\tau'}:X_{\eta_\tau}\to \Bb^\Z$ is injective and has image $Y$.
In other words $F_{\tau'}$ is a conjugacy between $X_{\eta_\tau}$ and $Y$.
\end{thm}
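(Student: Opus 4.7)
The plan is to prove the two conclusions separately: first that $F_{\tau'}(X_{\eta_\tau}) = Y$, and then that $F_{\tau'}$ is injective. The surjectivity onto $Y$ is immediate from the commuting triangle of Figure~\ref{fig:induced-code}, which lifts to the identity of factor maps $F_\tau = F_{\tau'} \circ F_{\beta_\tau}$. Since $\eta_\tau$ is by construction an inner encoding of $\theta$, the map $F_{\beta_\tau}\colon X_\theta \to X_{\eta_\tau}$ is surjective (any $\eta_\tau$-allowed word lifts through $\beta_\tau$ to a $\theta$-allowed word), and so $F_{\tau'}(X_{\eta_\tau}) = F_\tau(X_\theta) = Y$.

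For injectivity, I would recast the problem in terms of the joinings
\[R_\tau = \{(\tilde x, \tilde y) \in X_\theta^2 : F_\tau(\tilde x) = F_\tau(\tilde y)\}, \qquad R_{\tau'} = \{(x, y) \in X_{\eta_\tau}^2 : F_{\tau'}(x) = F_{\tau'}(y)\}.\]
Because $\tau = \tau' \circ \beta_\tau$, any coordinate-wise lift of a pair in $R_{\tau'}$ through $F_{\beta_\tau}$ automatically lies in $R_\tau$, so $R_{\tau'} = (\beta_\tau \times \beta_\tau)(R_\tau)$. Injectivity of $F_{\tau'}$ therefore reduces to showing that every $(\tilde x, \tilde y) \in R_\tau$ satisfies $\beta_\tau(\tilde x_i) = \beta_\tau(\tilde y_i)$ for all $i \in \mathbb{Z}$. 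The key calculation is that this conclusion holds for every $\theta$-fixed pair $(\tilde x^*, \tilde y^*) \in R_\tau$: the substitution identity $\tilde x^*_{j\ell^n + m} = {\theta^n}_m(\tilde x^*_j)$ (valid for $0 \leq m < \ell^n$) together with $\tau(\tilde x^*_i) = \tau(\tilde y^*_i)$ yields $\tau({\theta^n}_m(\tilde x^*_j)) = \tau({\theta^n}_m(\tilde y^*_j))$ for every $j, n, m$. This is exactly the equivalence relation used to define the refined partition $\tilde{\mathcal P}_\tau$, so $\tilde x^*_j \sim \tilde y^*_j$ and hence $\beta_\tau(\tilde x^*_j) = \beta_\tau(\tilde y^*_j)$ for all $j$.

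The main obstacle is to extend this conclusion from $\theta$-fixed pairs to an arbitrary $(\tilde x, \tilde y) \in R_\tau$. I would address this by passing to the pair substitution $\Theta = \theta \times \theta$ with column maps $\Theta_m(a,b) = (\theta_m(a), \theta_m(b))$, restricted to a $\Theta$-invariant alphabet of letter pairs that actually appear in $R_\tau$. The role of the pair-aperiodicity hypothesis is precisely to force every $\Theta$-periodic letter pair to be $\Theta$-fixed, thereby ruling out any off-diagonal periodic behaviour that could escape the fixed-pair analysis. Combined with the primitivity and aperiodicity of $\theta$, this lets one approximate every pair in $R_\tau$ by shift-orbits of $\Theta$-fixed pairs; since the condition $\beta_\tau(\tilde x_i) = \beta_\tau(\tilde y_i)$ is closed and shift-invariant and holds on $\theta$-fixed pairs by the previous paragraph, it extends to all of $R_\tau$, yielding the desired injectivity.
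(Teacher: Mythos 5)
First, note that the paper itself does not prove this statement: it imports it verbatim as \cite[Theorem 22]{M-Y}, so there is no internal proof to compare against. Judged on its own terms, your argument is sound up to and including the two correct observations that (i) surjectivity follows from $F_\tau=F_{\tau'}\circ F_{\beta_\tau}$ and (ii) injectivity reduces to showing that every $(\tilde x,\tilde y)\in R_\tau$ has $\sim$-related coordinates, which you verify for pairs of $\theta$-fixed points using the self-similarity $\tilde x^*_{j\ell^n+m}=\theta^n_m(\tilde x^*_j)$. The gap is the final extension step, which is where essentially all of the content of the theorem lives. Your scheme needs every $(\tilde x,\tilde y)\in R_\tau$ to lie in $\overline{O(u,u')}$ for some $\Theta$-fixed pair $(u,u')$ that \emph{itself lies in} $R_\tau$, and neither half of this is established. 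The hypotheses do not force $Y$ to be aperiodic (aperiodicity is assumed of $\theta$, not of its factor; $\tau$ constant is allowed), and if $Y$ is periodic of period $p$ then $R_\tau$ contains pairs $(\tilde x,\sigma^p\tilde x)$; such a pair lies in no orbit closure of a pair of fixed points, since $\sigma^{n_k}(u)\to\tilde x$ forces $n_k\to\pi(\tilde x)$ in $\Z_\ell$ while $\sigma^{n_k}(u')\to\sigma^p\tilde x$ forces $n_k\to\pi(\tilde x)+p$. Even restricting to pairs in a common fibre of the maximal equicontinuous factor (a property of $R_\tau$-pairs you would first have to prove, and which already uses aperiodicity of $Y$), an orbit closure $\overline{O(\tilde x,\tilde y)}$ in the product system need not be minimal — for instance when $(\tilde x,\tilde y)$ is a proximal pair, its orbit accumulates on the diagonal — so you can extract fixed pairs \emph{inside} $\overline{O(\tilde x,\tilde y)}$ but not the reverse inclusion $(\tilde x,\tilde y)\in\overline{O(u,u')}$ that your closedness argument requires; and you would still have to show $F_\tau(u)=F_\tau(u')$ for the approximating pair.

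The structural reason the soft argument cannot work is that the statement to be proved is equivalent to the $\theta\times\theta$-invariance of $R_\tau$, i.e.\ to $F_\tau(\tilde x)=F_\tau(\tilde y)\Rightarrow\tau(\theta^n_m(\tilde x_j))=\tau(\theta^n_m(\tilde y_j))$ for all $n,m,j$; this is not a closed-and-shift-invariant condition that can be propagated from the fixed-point fibre by density. A working proof must desubstitute $\tilde x$ and $\tilde y$ simultaneously along the common $\ell$-adic tower, $\tilde x=\sigma^{k_n}\theta^n(\tilde x^{(n)})$ and $\tilde y=\sigma^{k_n}\theta^n(\tilde y^{(n)})$ with the \emph{same} $k_n$ (this is where recognisability and the same-fibre property enter), record that $F_\tau(\tilde x)=F_\tau(\tilde y)$ forces each letter pair $(\tilde x^{(n)}_j,\tilde y^{(n)}_j)$ to satisfy $\tau(\theta^n_m(\cdot))$-agreement for all $m<\ell^n$, and then use pair-aperiodicity quantitatively to show that, deep enough in the tower, the relevant letter pairs are images of $\Theta$-fixed letter pairs, which do lie in $\bigcap_n E_n$ and whose $\sim$-relation is preserved by all column maps. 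Your appeal to pair-aperiodicity gestures at the right ingredient, but as written it stands in for this entire argument rather than supplying it.
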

We have summarise the statement in Figure~\ref{fig-bij-big}. The diagram follows from Figure~\ref{fig:induced-code}. The theorem states that $F_{\tau'}$ is a conjugacy.

 \begin{figure}  \begin{tikzcd}
    X_{\theta} \arrow{r} {F_{\beta_\tau}} \arrow[swap]{d}{F_\tau} &   X_{\eta_\tau } \arrow{dl}{F_{\tau'}}   
       \\                
         Y &
  \end{tikzcd}
 \caption{The commutative diagram for Theorem~\ref{th:factor-bijective}. $F_{\tau'}$ is a conjugacy.}
 \label{fig-bij-big}
\end{figure}

\begin{prop}\label{prop:radius-1}
Let $\theta$ be a  primitive, aperiodic length-$\ell$ substitution, of trivial height, and  
let $F:X_\theta\to Y\subset \Bb^\Z$ be a factor map. There exists an $n=(-l,r)$-collaring $\theta^{(n)}$ and a code $\tau:\Aa_{\theta^{(n)}}\to \Bb$ such that $F = F_\tau\circ F_{\imath}^{-1}$. 
Also, 
if $Y=X_\eta$ is an aperiodic length-$\ell$ substitution shift  
and $F$ preserves the fixed point fibre, 
  then   $l, r \leqslant 1$.
\end{prop}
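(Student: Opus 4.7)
The first statement follows from the Curtis--Hedlund--Lyndon theorem combined with Lemma~\ref{lem:right-kappa-value}. CHL yields a local rule $\varphi\colon \Aa_\theta^{r+1+l}\to\Bb$ for $F$ of some radii $(l,r)$; setting $\tau(a_{-l}\cdots a_r):=\varphi(a_{-l},\ldots,a_r)$ defines a radius-zero code on the collared alphabet $\Aa_{\theta^{(n)}}$ with $n=(-l,r)$. Since by Lemma~\ref{lem:right-kappa-value} the factor map $F_\imath\colon X_{\theta^{(n)}}\to X_\theta$ is injective between minimal shifts on compact spaces, it is a conjugacy, and one checks directly that $F=F_\tau\circ F_\imath^{-1}$.

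For the second statement, the plan is to use the self-similar structure to constrain the radii. First I would establish the intertwining relation $F\circ\theta=\eta\circ F$. On any $\theta$-fixed point $x$, the hypothesis that $F$ preserves the fixed point fibre gives $F(\theta x)=F(x)=\eta(F(x))$. Combined with the equivariance $\theta\sigma=\sigma^\ell\theta$ and the density of the $\sigma$-orbit of $x$ (which holds by primitivity of $\theta$), continuity of $F$ propagates the identity to all of $X_\theta$. Iterating yields $F\circ\theta^k=\eta^k\circ F$ for every $k\geq 1$.

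Next, I would choose $k$ with $\ell^k>\max(l,r)$ and analyse, for $n\in\{0,\ldots,\ell^k-1\}$ and $y\in X_\theta$, the identity $F(\theta^k y)_n={\eta^k}_n(F(y)_0)$. On the left, the value depends on the letters $\theta^k(y)_{n-l},\ldots,\theta^k(y)_{n+r}$; each position $m\in[-l,\ell^k-1+r]$ satisfies $\lfloor m/\ell^k\rfloor\in\{-1,0,1\}$, so $\theta^k(y)_m$ is a function of $y_{-1},y_0,y_1$ alone. As $n$ ranges over $\{0,\ldots,\ell^k-1\}$, the left-hand sides assemble into the whole word $\eta^k(F(y)_0)$, which is therefore a function solely of the triple $(y_{-1},y_0,y_1)$.

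The main obstacle is the last step: descending from the dependence of $\eta^k(F(y)_0)$ to that of $F(y)_0$ itself, thereby forcing $l,r\leq 1$. This requires that, for some sufficiently large $k$, the map $\eta^k$ be injective on the set of letters of $\Aa_\eta$ actually appearing in $X_\eta$. Since $F$ is surjective and $X_\theta$ is minimal, $X_\eta$ is itself minimal, so one may replace $\eta$ without loss of generality by a primitive constant-length substitution generating the same shift; Moss\'{e}'s recognizability theorem then gives injectivity of $\eta$ on letters, and by induction on $k$ of every $\eta^k$. The care needed to perform this reduction — given that the hypotheses allow $\eta$ to be non-primitive a priori — is the subtle point of the argument; once established, combining it with the dependency computation above yields $F(y)_0=\psi(y_{-1},y_0,y_1)$ for some map $\psi$, which is the desired radius bound.
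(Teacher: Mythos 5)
Your first paragraph is correct and coincides with the paper's argument. For the second statement your route is genuinely different from the paper's: you establish the pointwise intertwining $F\circ\theta=\eta\circ F$ (which does follow, as you say, from agreement on a fixed point together with the fact that both maps intertwine $\sigma$ with $\sigma^{\ell}$, plus minimality and continuity), and you then read off the radius of $F$ directly from the identity $F(\theta^k y)_{[0,\ell^k)}=\eta^k\bigl(F(y)_0\bigr)$. The paper never uses this identity; it instead forms the auxiliary map $G=\eta^{-n}\circ F\circ\theta^n$ (using recognisability to invert $\eta^n$), bounds the radius of $G$, and then needs a separate pigeonhole argument — the set $\Ff$ of fibre-preserving factor maps $X_\theta\to X_\eta$ is finite, and $F_i\mapsto\eta^{-n}\circ F_i\circ\theta^n$ permutes it — to transfer the bound from $G$ back to $F$. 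If your argument closed, it would be cleaner, since it bounds the radius of $F$ itself and makes that last step unnecessary.

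However, there is a genuine gap exactly where you flag "the subtle point": Moss\'e's recognisability theorem does \emph{not} give injectivity of $\eta$ on letters. Recognisability (unique desubstitution of points of $X_\eta$) is compatible with $\eta(a)=\eta(b)$ for distinct letters $a,b$ both occurring in $X_\eta$ — primitive aperiodic substitutions with this feature exist (the paper's Example \ref{ex:exhaustive} has $\theta(a)=\theta(c)$), and passing to a power cannot help, since $\eta(a)=\eta(b)$ forces $\eta^k(a)=\eta^k(b)$ for all $k$. Replacing $\eta$ by a primitive substitution generating the same minimal shift does not remove this possibility either. Without letter-injectivity your dependency computation only shows that the injectivisation class of $F(y)_0$ — equivalently, the value $\gamma(F(y)_0)$ for the injectivisation code $\gamma$ — is determined by $(y_{-1},y_0,y_1)$; that bounds the radius of $F_\gamma\circ F$, not of $F$, and composing with $F_\gamma^{-1}$ (which need not have radius $0$) reintroduces an uncontrolled radius. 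So you still need an argument to descend from the recoded map to $F$ itself; the paper's finiteness/permutation argument on $\Ff$ is precisely the ingredient that does this job, and some substitute for it is missing from your proposal.
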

The statement of this proposition is summarised in Figure~\ref{fig:prop:radius-1}.

  \begin{figure}[h]
  \begin{tikzcd}
    X_{\theta}  \arrow[swap]{dr}{F} &   X_{\theta^{(n)} } \arrow[swap]{l} {F_{\iota}} \arrow{d}{F_\tau}     
        \\               
         & Y
  \end{tikzcd}
  \caption{The commutative diagram for Prop.~\ref{prop:radius-1}. The composition $F_{\beta_\tau}\circ F_\imath^{-1}$ preserves the fixed point fibre.}
  \label{fig:prop:radius-1}
 \end{figure}

\begin{proof}
 Let $l$, $r$ be the left, right  radius of $F$ respectively, and let $n=(-l,r)$. Then there is a code  $\tau:\Aa_{\theta^{(n)}}\to \Bb$ such that $F_\tau:X_{\theta^{(n)}}\rightarrow Y$ is a factor map and such that $F = F_\tau\circ F_{\imath}^{-1}$.  Note that by Lemmas \ref{lem:right-kappa-value}  and \ref{lem:code-factor-kappa},  $F_{\imath}^{-1} $ maps fixed points to fixed points.

To prove the second statement, suppose that $Y=X_\eta$ for some length-$\ell$ substitution  and that $F$ preserves the fibre of fixed points.

  Let $R = \max \{ l, r \}$. We know that $(F (x))_i$ is determined by $x_{[i - R, i
  + R]}$, and thus $(F (x))_{[0, \ell^n)}$ is determined by $x_{[- R, \ell^n +
  R)}$. Thus, if we choose $n \geqslant \lceil \log_{\ell} R \rceil$, this
  ensures $R \leqslant \ell^n$, hence $(F (x))_{[0, \ell^n)}$ is entirely
  determined by $x_{[- \ell^n, 2 \ell^n)}$.

  Fix $n \geqslant \lceil \log_{\ell} R \rceil$. Since $F$ sends fixed points to fixed points, then $F (\theta^n
  (X_{\theta})) \subseteq \eta^n (X_{\eta})$.
    Define $G \colon X_{\theta} \rightarrow X_{\eta}$ as $G := \eta^{- n} \circ
  F \circ \theta^n$. Note that $F \circ \theta^n$ maps $X_{\theta}$ to $\eta^n
  (X_{\eta})$, and the map $\eta^{- n} \colon \eta^n (X_{\eta}) \rightarrow
  X_{\eta}$ is well-defined by recognisability of $\eta$, so $G$ is also
  well-defined. It is not hard to check that $G$ is continuous and $G \circ
  \sigma = \sigma \circ G$, so $G$ is also a factor map. 
  
  Given knowledge
  of $x_{[- 1, 1]}$, we know $(\theta^n (x))_{[- \ell^n, 2 \ell^n)}$ and hence
  $(F \circ \theta^n (x))_{[0, \ell^n)}$ is also determined, as discussed
  above.
  
  Recall that a substitution  $\eta$ is {\em injective} if the map $\eta:\Aa\to \Aa^\ell$ is injective. If $\eta$ is injective, then $(F \circ \theta^n (x))_{[0, \ell^n)}$
  determines $(\eta^{- n} \circ F \circ \theta^n (x))_0$, and thus $G$ has
  left and right radius at most $1$. Also, $G$ must 
   send fixed points to fixed points.
If $\eta$ is non-injective on letters, we can replace it with an
  injectivisation $\tilde{\eta}$ of $\eta$, which is the standard example of an inner encoding of $\eta$, with two letters identified if and only if their images under $\eta$ are equal, see \cite{BDM}. As the natural conjugacy $X_{\eta}
  \rightarrow X_{\tilde{\eta}}$ has radius $0$, this will not change the
  desired result, that $G$ has
  left and right radius at most $1$.

 It remains to show that the radius restriction for $G$ implies the same
  restriction for $F$. Minimality implies that   any
  factor map $X_{\theta} \rightarrow X_{\eta}$ is entirely determined by
  the image of a single point. As the fixed point fibres of substitutions are finite, there can only be finitely many factor maps $F_i:X_\theta\to X_\eta$ which preserve the fixed point fibre. 
Let $\Ff:=\{ F_1, \ldots, F_{k} \}$ be the collection of those.   Let $l_i, r_i$ be the left and right radius of each $F_i$.
  If we define $R = \max \{ l_1, r_1, \ldots, l_{k}, r_{k} \}$ and take $n
  \geqslant \lceil \log_{\ell} R \rceil$, then the above argument shows that $ \eta^{- n} \circ F_i \circ \theta^n$ preserves the fixed point fibre of $\theta$ and hence the
  map {$\Ff \rightarrow \Ff$} given by
  $F_i \mapsto \eta^{- n} \circ F_i \circ \theta^n$ is well-defined. This map
  is a bijection. For, if $ \eta^{- n} \circ F_i \circ \theta^n = \eta^{- n} \circ F_j \circ
       \theta^n $, then $F_i \circ \theta^n (u) = F_j \circ \theta^n
       (u) $ for any point $u \in X_{\theta}$, and in particular for any fixed point $u$ of $\theta$, so that $F_i (u) = F_j (u) $ for $u$ a fixed point. Now minimality implies that the map is injective and since $\Ff$ is finite it is also surjective. 

To conclude, we have seen above that every factor
  map of the form $\eta^{- n} \circ F_i \circ \theta^n$ has left and right radius at most $1$ and so the same must hold for all $F_i$.
\end{proof}

\begin{cor}
\label{cor:factor-bij}
Let $\theta$ be an aperiodic, primitive, length-$\ell$ substitution on $\mathcal A_{\theta}$, of trivial height. 
Let $F:X_\theta\to Y$ be a shift factor. Then there exists an $n=(-l,r)$-collaring $\theta^{(n)}$ with $ l,r \leq 1$, a natural number $p\geq 1$, a    length-$\ell^p$ substitution $\eta$
 and a code $\beta:\Aa_{{\theta^{(n)}}^p}\to \Aa_\eta$ such that $(\eta,\beta)$ is an inner encoding of ${\theta^{(n)}}^p$ and $(X_\eta,
\sigma)$ is conjugate to $(Y,\sigma)$.
\end{cor}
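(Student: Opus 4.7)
My plan is to combine Proposition \ref{prop:radius-1} and Theorem \ref{th:factor-bijective}, with the crucial observation that once $Y$ has been identified with a substitution shift, the induced factor map automatically preserves the fixed point fibre, unlocking the radius bound from Proposition \ref{prop:radius-1} (second part). First I would produce an auxiliary realisation with possibly large collaring: by Proposition \ref{prop:radius-1} (first part), $F = F_{\tau_0} \circ F_\imath^{-1}$ for some collaring $n_0=(-l_0,r_0)$ and code $\tau_0 \colon \Aa_{\theta^{(n_0)}} \to \Bb$. Choosing $p_0 \ge 1$ so that $(\theta^{(n_0)})^{p_0}$ is pair-aperiodic (possible by the cited result of \cite{M-Y}), Theorem \ref{th:factor-bijective} applied to this substitution and the code $\tau_0$ yields an inner encoded substitution $(\eta_0,\beta_0)$ of $(\theta^{(n_0)})^{p_0}$, of length $\ell^{p_0}$, together with a conjugacy $\phi_0 \colon X_{\eta_0} \to Y$.

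Next I would refine this to obtain the bound $l,r\le 1$. Set $G := \phi_0^{-1} \circ F = F_{\beta_0} \circ F_\imath^{-1} \colon X_\theta \to X_{\eta_0}$, regarded as a factor between two length-$\ell^{p_0}$ substitution shifts via $X_{\theta^{p_0}} = X_\theta$. Both $F_{\beta_0}$ and $F_\imath^{-1}$ preserve the fixed point fibre, the former by Lemma \ref{lem:code-factor-kappa} applied to the inner encoding $(\eta_0,\beta_0)$ and the latter as the inverse of the inner encoding conjugacy $F_\imath$, so $G$ does as well. The second part of Proposition \ref{prop:radius-1}, applied to $\theta^{p_0}$ and the fixed-point-preserving $G$, therefore forces the radii of $G$ to be at most $1$, giving a code $\beta \colon \Aa_{(\theta^{p_0})^{(-1,1)}} \to \Aa_{\eta_0}$ with $G = F_\beta \circ F_\imath^{-1}$; since $F_\beta = G \circ F_\imath$ preserves the fixed point fibre, the converse in Lemma \ref{lem:code-factor-kappa} (using primitivity of $(\theta^{p_0})^{(-1,1)}$) upgrades $\beta$ to an inner encoding code.

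A direct verification from Definition \ref{def:collaring} gives the identification $(\theta^{p_0})^{(-1,1)} = (\theta^{(-1,1)})^{p_0}$ as substitutions on the common alphabet of allowed $3$-letter words, so $(\eta_0,\beta)$ is the desired inner encoding of $(\theta^{(-1,1)})^{p_0}$, completing the proof with $n=(-1,1)$, $p=p_0$, and $\eta=\eta_0$. I expect the main point to be the automatic fixed-point preservation of $G$, which is what enables the second invocation of Proposition \ref{prop:radius-1} and avoids any need to translate the factor by hand via the $k$-shifting construction; the remaining bookkeeping, namely choosing $p_0$ to guarantee pair-aperiodicity and verifying that collaring commutes with powers, is routine.
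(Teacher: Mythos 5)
Your proposal is correct and follows the same two-stage bootstrap as the paper: first use Proposition~\ref{prop:radius-1} and Theorem~\ref{th:factor-bijective} to replace $Y$ by a substitution shift, then exploit the fact that the resulting composite preserves the fixed point fibre to invoke the radius bound of Proposition~\ref{prop:radius-1} a second time. The only divergence is in the final extraction step: the paper reruns Theorem~\ref{th:factor-bijective} on the radius-$\le 1$ realisation to manufacture a fresh inner encoding of ${\theta^{(\tilde n)}}^{\tilde p}$, whereas you keep the substitution $\eta_0$ produced in the first round and promote the radius-$\le 1$ code directly to an inner encoding via the primitivity converse of Lemma~\ref{lem:code-factor-kappa}. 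Your variant is slightly more economical (no second power $\tilde p$, no second substitution), but it does require the identity $(\theta^{p_0})^{(-1,1)}=(\theta^{(-1,1)})^{p_0}$, which you correctly flag and which is indeed a routine check from Definition~\ref{def:collaring}; the paper's route avoids needing it because Theorem~\ref{th:factor-bijective} is applied directly to ${\theta^{(\tilde n)}}^{\tilde p}$. Both arguments share the same implicit reliance on $Y$ being infinite, so that the intermediate substitution is aperiodic and the recognisability step inside Proposition~\ref{prop:radius-1} applies.
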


\begin{proof}
By Proposition \ref{prop:radius-1} there is a factor map $F_\tau:X_{\theta^{(n)}}\to Y$  for some collaring $\theta^{(n)}$ of $\theta$ 
such that the
diagram in Figure~\ref{fig:prop:radius-1} commutes.
 
Let $p$ be such that ${\theta^{(n)}}^p$ is pair aperiodic. 
We apply Theorem~\ref{th:factor-bijective} to 
 $F_\tau:X_{{\theta^{(n)}}^p}\to Y$ to obtain the inner encoding $(\eta_\tau,\beta_\tau)$ of ${\theta^{(n)}}^p$. This situation is summarised in 
  the following diagram. Note that $X_{{\theta^{(n)}}^p }$ is equal to $X_{{\theta^{(n)}}}$.
 $$\begin{tikzcd}
    X_{\theta} \arrow{r} {F_{\iota}^{-1}} \arrow[swap]{dr}{F} &   X_{{\theta^{(n)}}^p } \arrow{d}{F_\tau}     \arrow{r} {F_{\beta_\tau}} &X_{\eta_\tau}   \arrow{dl}{F_{\tau'}}   \\                
     & Y
  \end{tikzcd}$$
If $n=(-l,r)$ with $l,r\leq 1$ then we are done, the factor $X_\theta\to Y$ is conjugate to the factor $F_{\beta_\tau}:X_{{\theta^{(n)}}^p}\to X_{\eta_\tau}$  where $(\eta_\tau,\beta_\tau)$ is an inner encoding.

If $n=(-l,r)$ with perhaps $l>1$ or $r >1$ then we need one more step.
 As $F_\imath$ and $F_{\beta_\tau}$ are both obtained from inner encodings, they preserve the fixed point fibres. It follows that  the composition $\tilde F:= F_{\beta_\tau}\circ F_\imath^{-1}:X_\theta \to X_{\eta_\tau}$ preserves the fixed point fibre. We repeat the whole argument above but with $X_{\eta_\tau}$ in place of $Y$. We can apply Proposition \ref{prop:radius-1}, to
  obtain
the commutative diagram
$$
  \begin{tikzcd}
    X_{\theta}  \arrow[swap]{dr}{\tilde F} &   X_{{\theta^{(\tilde n)}}^{\tilde p} } \arrow[swap]{l} {F_{\iota}} \arrow{d}{F_{\tilde \tau}}     
    \arrow{r} {F_{\beta_{\tilde \tau}}} &X_{\eta_{\tilde \tau}}   \arrow{dl}{F_{\tilde\tau'}}   
    \\               
          & X_{\eta_\tau}
  \end{tikzcd}
$$
however this time with $\tilde n = (\tilde l,\tilde r)$ with $\tilde l,\tilde r\leq 1$. This gives us a chain of conjugacies, namely between the factor $F:X_\theta\to Y$ and $\tilde F :X_\theta\to X_{\eta_\tau}$ as we saw above, and then between $\tilde F :X_\theta\to X_{\eta_\tau}$ and $F_{\beta_{\tilde\tau}} :X_{{\theta^{(\tilde n)}}^{\tilde p}}\to X_{\eta_{\tilde \tau}}$. 
 \end{proof}
 We note that the number $p$ in  Corollary  \ref{cor:factor-bij} is bounded by $p(\theta^{(n)})$ in \eqref{eq:pair-periodic}.

  Combining Corollary \ref{cor:factor-bij} and Lemma \ref{lem:code-factor-kappa}, we obtain
  \begin{cor}
\label{cor:factor-through}
Let $\theta$ be an aperiodic, primitive, length-$\ell$ substitution on $\mathcal A_{\theta}$, of trivial height, with maximal equicontinuous factor map $\pi_\theta:X_\theta\rightarrow \Z_\ell$. 
If $(X_\theta, \sigma)$ has an almost automorphic  shift factor, then it has a shift factor $\pi:X_\theta\rightarrow Y$ such that  $\pi_\theta = \pi_Y\circ \pi$, where $\pi_Y:Y\rightarrow \Z_\ell$ is an almost injective factor map.
\end{cor}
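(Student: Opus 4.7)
The plan is to combine Corollary \ref{cor:factor-bij} with the second part of Lemma \ref{lem:code-factor-kappa}, and then extract almost injectivity from the almost automorphic hypothesis. Let $F:X_\theta\to Y'$ be an almost automorphic shift factor; we may assume $Y'$ is infinite (properly almost automorphic), the finite equicontinuous case being covered trivially. Applying Corollary \ref{cor:factor-bij}, we realise $F$ up to conjugacy as $F_\beta\circ F_\imath^{-1}:X_\theta\to X_\eta$, where $(\eta,\beta)$ is an inner encoding of some collared power $(\theta^{(n)})^p$ with $l,r\leq 1$ in $n=(-l,r)$. Since $X_\eta\cong Y'$ is infinite, $\eta$ is aperiodic; since collarings are conjugacies and powers do not alter the shift space, $(\theta^{(n)})^p$ still has trivial height.

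With this in place, the second part of Lemma \ref{lem:code-factor-kappa} applies to the aperiodic inner encoding $(\eta,\beta)$ of $(\theta^{(n)})^p$: the maximal equicontinuous factor map of $X_\theta=X_{(\theta^{(n)})^p}$ factors through $F_\beta$, producing $\tilde\pi_Y:X_\eta\to\Z_\ell$ with $\pi_\theta\circ F_\imath=\tilde\pi_Y\circ F_\beta$. Setting $Y:=X_\eta$, $\pi:=F_\beta\circ F_\imath^{-1}$ and $\pi_Y:=\tilde\pi_Y$ yields $\pi_\theta=\pi_Y\circ\pi$ as required.

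It remains to verify that $\pi_Y$ is almost injective. Let $p:Y\to G_0$ be the maximal equicontinuous factor map of $Y$, which is almost injective because $Y\cong Y'$ is almost automorphic. By maximality of $G_0$, the factor $\pi_Y:Y\to\Z_\ell$ to the equicontinuous target factors through $p$ as $\pi_Y=g\circ p$ for some continuous shift-equivariant $g:G_0\to\Z_\ell$; dually, applying maximality of $\pi_\theta$ to the equicontinuous factor $p\circ\pi:X_\theta\to G_0$ gives $\psi:\Z_\ell\to G_0$ with $p\circ\pi=\psi\circ\pi_\theta$. Then $g\circ\psi\circ\pi_\theta=\pi_Y\circ\pi=\pi_\theta$, and surjectivity of $\pi_\theta$ forces $g\circ\psi=\mathrm{id}_{\Z_\ell}$. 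A short argument shows that any continuous, $+1$-equivariant self-map of $\Z_\ell$ is a translation, hence a bijection, so both $g$ and $\psi$ must be bijections; consequently $\pi_Y=g\circ p$ is almost injective as a composition of an almost injective map with a bijection.

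The main obstacle is this last verification, where one has to rule out the possibility that the MEF $G_0$ of $Y$ is a proper (finite) quotient of $\Z_\ell$ and then conclude that $g:G_0\to\Z_\ell$ is a bijection. This exploits the rigidity of shift-equivariant maps between $\ell$-adic odometers and crucially uses that $Y$ was assumed to be an infinite almost automorphic extension, so that the MEF of $Y$ is forced to be infinite.
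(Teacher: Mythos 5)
Your proof is correct and takes essentially the same route as the paper, which proves this corollary simply by combining Corollary \ref{cor:factor-bij} with Lemma \ref{lem:code-factor-kappa}; your final paragraph usefully makes explicit why $\pi_Y$ is almost injective (the mutual factoring $g\circ\psi=\mathrm{id}_{\Z_\ell}$ with $\psi$ surjective already forces both to be bijections, so the odometer rigidity argument is not even needed), a point the paper leaves implicit. One small caveat: the case of a finite almost automorphic factor is not ``covered trivially'' but must be excluded, since then no $Y$ admitting a factor map onto $\Z_\ell$ need exist --- the corollary is only meaningful under the (implicit, and consistent with the rest of the paper) assumption that the almost automorphic factor is aperiodic.
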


\section{The semigroup of a substitution, inner encodings and outer encodings} \label{sec:-almost-automorphic-result}
In this section we completely   characterise the length-$\ell$ substitutions $\theta$ which have a factor that is  almost automorphic over the maximal equicontinuous factor of $(X_\theta,\sigma)$, in Theorem \ref{thm:characterisation} and Corollary \ref{cor:main-thm-2}.
In Section \ref{Section-basic} we have seen that such a factor is conjugate to one which is obtained by an inner encoding of $\theta$ which has a coincidence, provided that $\theta$ has trivial height. We will do this here by using the algebraic structure of the  semigroup $S_\theta$ which we introduce in   Definition \ref{def:semigroup}. We will find that there are always 
inner encodings of $\theta$ which have a coincidence, but the desired almost automorphic factor will exist only if the relevant inner encoding is aperiodic. Finally we explain how to transpose these results to substitutions with non-trivial height.

\subsection{Semigroup preliminaries}
We need some background material on subsemigroups of the semigroup $\Ff(X)$ of maps from $X$ to itself; see also \cite{Clifford-Preston, howie1995fundamentals, Pin}. Here $X$ is just a set and the semigroup product is composition of functions. For our purposes $X$ will be a finite set. 
We denote by $\Pp(X)$ the set of subsets of $X$. 

Let $f:X\to Y$. We denote by $f^{-1}:\Pp(Y)\to \Pp(X)$ the pre-image map but simply write  $f^{-1}(y)$ for $f^{-1}(\{y\})$.
 The map $f$ defines an equivalence relation as $x\sim x'$ if $f(x)=f(x')$. 
We denote the associated partition by $\Pp_f$, that is,
$$\Pp_f =\{f^{-1}(y) : y\in Y\}.$$
The cardinality of $\Pp_f$ equals the rank of $f$, that is the cardinality of its image $\im f$.

Recall  two of  Green's equivalence relations $\Ll$, $\Rr$. They give a first way to approach and organise a semigroup $S$. We say that  $a,b\in S$ are $\Ll$-related, or $\Rr$-related, if they generate the same left, or right ideal, respectively. If $S$ is a group, then these relations coincide with the full relation. Each of the above relations partition the semigroup. 
$\Rr$ is a left congruence and therefore $S/\Rr$ a left-$S$-module. 
$\Ll$ is a right congruence and therefore $S/\Ll$ a right-$S$-module. 

\begin{lem}\label{lem-partition1}
Let $S$ be a semigroup of $\Ff(X)$. If $f,g\in S$ are $\Rr$-related then $\im f = \im g$. If $f,g\in S$ are $\Ll$-related then $\Pp_f = \Pp_g$. 
\end{lem}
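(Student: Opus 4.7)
The plan is to unfold Green's relations and apply two elementary observations about composition in $\Ff(X)$ (using the convention $fg = f\circ g$): for any $h, k\in \Ff(X)$ one has $\im(hk) \subseteq \im h$, and $\Pp_k$ refines $\Pp_{hk}$. These amount to saying that multiplying on the right can only shrink the image while multiplying on the left can only coarsen the fibres.

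First I would handle the $\Rr$-statement. From $f\,\Rr\, g$, i.e.\ $fS^1 = gS^1$, I extract $s, t \in S^1$ with $f = gs$ and $g = ft$. Applying the first observation to each equation yields $\im f \subseteq \im g$ and $\im g \subseteq \im f$, hence $\im f = \im g$. (If $s$ or $t$ equals the adjoined identity, the equation collapses to $f = g$ and the inclusion is trivial, so there is nothing to worry about in passing from $S$ to $S^1$.)

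For the $\Ll$-statement I would argue symmetrically: $f\,\Ll\, g$ produces $s, t \in S^1$ with $f = sg$ and $g = tf$, and the second observation gives that $\Pp_g$ refines $\Pp_f$ and $\Pp_f$ refines $\Pp_g$; two partitions of $X$ that mutually refine each other must coincide, so $\Pp_f = \Pp_g$. I do not expect any real obstacle here: the lemma is a direct formal consequence of Green's definitions once the composition convention is fixed and one tracks which side of the product controls which invariant. The only thing to state carefully is which variable ``acts first'' under $fg = f \circ g$, so that ``right multiplication preserves image'' and ``left multiplication preserves fibre partition'' fall out in the correct direction.
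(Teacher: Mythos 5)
Your proposal is correct and follows essentially the same route as the paper: right multiplication shrinks the image (giving the $\Rr$-statement by mutual inclusion) and left multiplication coarsens the fibre partition (giving the $\Ll$-statement by mutual refinement), with the $S^1$/identity case handled exactly as the paper's ``$f=g$ or\dots'' clause. No gaps.
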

\begin{proof}
Let $f$ and $g$ be $\Rr$-related, that is, $f=g$ or there is $f',g'$ such that \ $f = gg'$, $g=ff'$. 
Then clearly $\im f\subset \im g$ and $\im g\subset \im f$. 

Let $f$ and $g$ be $\Ll$-related, that is, $f=g$ or there is $f',g'$ such that \ $f = g'g$, $g=f'f$. 
Then $f^{-1}(x) = g^{-1}({g'}^{-1}(x))$ showing that the members of the partition $\Pp_f$ are unions of members of the partition of $\Pp_g$, i.e.\ the partition $\Pp_g$ is finer than $\Pp_f$. A symmetric argument shows that $\Pp_f=\Pp_g$. 
\end{proof} 

An element $f\in S\subset \Ff(X)$ is {\em completely regular} if it has a normal inverse, that is, there exists $g\in S$ such that $fgf=f$ and $fg=gf$. This implies that $fg$ is an  idempotent. As normal inverses are unique we call $fg$ the idempotent associated to $f$ and denote it $f^0$.

A {\em completely simple} semigroup is a semigroup which has no proper bilateral ideals and which contains an idempotent. 
The {\em kernel} of a semigroup, if it exists, is its smallest bilateral ideal. 
If $X$ is finite then any sub-semigroup of $\Ff(X)$ admits a kernel.
We denote the kernel of $S$ by $\ker S$;
if it contains an idempotent it  is completely simple, and  any element is completely regular.

\begin{lem}\label{lem-partition2}
Let $X$ be a finite set and $S\subset \Ff(X)$. Then $S$ is completely simple if and only if all its functions have the same rank. Moreover, if $f,g\in S$ belong to the same right ideal, then $g^0 f = f$, while if $f,g$ belong to the same left ideal, then $f = f g^0$.
\end{lem}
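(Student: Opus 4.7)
The plan is to prove the two parts of the lemma separately. For the forward implication of the first equivalence, I assume $S$ is completely simple, so $S$ is its own kernel and $S^1 f S^1 = S$ for every $f\in S$. Hence every $g\in S$ factorises as $g = afb$ with $a,b \in S^1$, which forces $\mathrm{rank}(g)\leq \mathrm{rank}(f)$; swapping $f$ and $g$ gives equality. For the converse, I start from the hypothesis that all elements of $S$ share a common rank $r$ and show that every $s\in S$ lies in a subgroup of $S$. Since rank is constant along the powers $s^n$, the nested sequence of partitions $\Pp_{s^n}$ must be constant (nested of equal cardinality), and the images $\mathrm{im}(s^n)$ all coincide with a single set $Y$ of size $r$. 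Consequently $s|_Y$ is a bijection of $Y$ of some finite order $d$, giving $s^{d+1}=s$; this makes $e := s^d$ an idempotent with $es = se = s$. The restriction map $eSe \to \mathrm{Sym}(Y)$ is an injective semigroup homomorphism (because every $a\in eSe$ satisfies $a(x) = a(e(x))$ and is thus determined by $a|_Y$), so $eSe$ is a finite subgroup of $\mathrm{Sym}(Y)$ with identity $e$; in particular $s$ admits an inverse $s^{-1} \in eSe \subseteq S$.

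Armed with these idempotents and group inverses inside $S$, I then show $S$ has no proper bilateral ideal. Let $I$ be a non-empty bilateral ideal and $t\in I$ with associated idempotent $e$. Then $ete\in I$ lies in the group $eSe$, so $e = (ete)(ete)^{-1} \in I\cdot S \subseteq I$. For any other idempotent $f\in S$, the product $fef$ belongs to $I$ (since $e\in I$ and $I$ is bilateral), and the group $fSf$ gives $f = fef\cdot (fef)^{-1}\in I$. Finally, for any $u\in S$ with associated idempotent $f$, $u = uf \in S\cdot I\subseteq I$. Thus $I=S$, so $S$ is simple, and with idempotents already in hand, completely simple. The main obstacle in the converse is precisely this manoeuvre: coordinating the various subgroups $eSe$ and $fSf$ to force each idempotent, and thereafter each element, into any given ideal.

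For the moreover clause, complete simplicity equips each $g\in S$ with a normal inverse $h$ satisfying $ghg = g$ and $gh = hg$, so the associated idempotent $g^0 = gh = hg$ fulfils $g^0 g = ghg = g$ and $gg^0 = (gh)g = g$. If $f, g$ lie in the same right ideal (equivalently, they are $\Rr$-related and $f\in gS^1$), write $f = gs$ with $s\in S^1$; then $g^0 f = (g^0 g)s = gs = f$. The left-ideal case is symmetric: $f = sg$ yields $fg^0 = s(gg^0) = sg = f$.
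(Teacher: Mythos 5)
Your proof is correct, and the forward implication and the ``moreover'' clause coincide with the paper's argument (rank can only drop under composition, so the elements of rank $\leq n$ form a bilateral ideal; and $g^0f=g^0gg'=gg'=f$, resp.\ $fg^0=g'gg^0=g'g=f$). Where you genuinely diverge is the converse. The paper argues directly: from finiteness one gets $f=f^{k+1}$, hence the restriction of $(gf)^k$ to $\im(gf)=\im g$ is the identity, so $g=(gf)^kg\in S^1fS^1$, and every element lies in the ideal generated by any other. You instead build, at each idempotent $e=s^d$, the local group $eSe$ (via the injective restriction to $\mathrm{Sym}(\im e)$ and the fact that a finite subsemigroup of a group is a group), and then show that any bilateral ideal must absorb every idempotent ($f=fef\cdot(fef)^{-1}$) and hence every element ($u=uf$). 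Both routes rest on the same finite-rank stabilisation $s^{d+1}=s$; the paper's is shorter, while yours has the side benefit of actually proving that every element of $S$ is completely regular and lies in a subgroup with identity $s^0=s^d$ --- a fact the paper asserts without proof just before the lemma and then invokes for the ``moreover'' part, so your version is in that sense more self-contained. One small point worth spelling out: when you map $eSe$ into $\mathrm{Sym}(Y)$ you should note that each $a\in eSe$ restricts to a \emph{bijection} of $Y=\im e$; this follows from the equal-rank hypothesis, since $a(Y)=(ae)(X)=\im a\subseteq Y$ and $|\im a|=|Y|$, so $a|_Y$ is a surjection of a finite set. Without that remark the target $\mathrm{Sym}(Y)$ is not justified, but the fix is immediate.
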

\begin{proof} Given $n\in\N$, the subset of functions of rank $\leq n$ form a bilateral ideal in $S$. Hence the condition that all functions have the same rank is necessary for simplicity. 

Now suppose that all functions of $S$ have the same rank. Let $f\in S$. Then $\im f = \im f^k$ for all $k\geq 1$. As $X$ is finite there must by a $k>0$ such that $f = f^{k+1}$. Hence $f^{k}=f^0\in S$. The restriction of $f^0$ to $\im f$ is the identity. 
Let also $g\in S$. As $fg\in S$ the argument above shows that there is $k>0$ such that the restriction of $(gf)^k$ to $\im gf = \im g$ is the identity. It follows that $(gf)^k g = g$. Hence $g$ belongs to the bilateral ideal generated by $f$. Choosing $f\in \ker S$ we see that $S=\ker S$ and so is simple. 

Suppose that $f,g\in S$ belong to the same right ideal. Then $f = g g'$ for some $g'\in S$. Hence $g^0f = g^0 g g' = g g' = f$. Similarily, if $f,g\in S$ belong to the same left ideal. Then $f = g' g$ for some $g'\in S$. Hence $f g^0 = g' g g^0 = f$.
\end{proof}
\begin{cor}\label{cor-cp-s}
Let $X$ be a finite set and $S\subset \Ff(X)$. The kernel of $S$ is given by its functions of minimal rank. It is completely simple.
\end{cor}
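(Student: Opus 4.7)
The plan is to identify the kernel with the set $K := \{f\in S : \mathrm{rank}(f) = r_{\min}\}$, where $r_{\min} := \min\{\mathrm{rank}(f) : f\in S\}$, and then invoke Lemma~\ref{lem-partition2} to conclude complete simplicity.

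First I would verify that $K$ is a two-sided ideal of $S$. This uses the elementary fact that for any $f,g \in \Ff(X)$, $\mathrm{rank}(fg) \leq \min\{\mathrm{rank}(f),\mathrm{rank}(g)\}$. Thus for $f\in K$ and $g\in S$, both $fg$ and $gf$ have rank at most $r_{\min}$, and by minimality they have rank exactly $r_{\min}$, so they lie in $K$. In particular $K$ is a subsemigroup of $\Ff(X)$, and all of its elements share the rank $r_{\min}$, so by Lemma~\ref{lem-partition2} the semigroup $K$ is completely simple.

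Next I would show that $K$ coincides with the kernel of $S$. Since $X$ is finite, $S\subset \Ff(X)$ is finite and $K$ is non-empty, so $K$ is at least a bilateral ideal, giving $\ker S \subseteq K$. For the reverse inclusion, let $I$ be an arbitrary bilateral ideal of $S$; I claim $K \subseteq I$. Pick any $h\in I$ and any $f\in K$; then $fh \in I$ (since $I$ is an ideal) and $fh \in K$ (since $K$ is an ideal and $f\in K$). Thus $J := K\cap I$ is non-empty. Moreover, for $j\in J$ and $k\in K$, we have $jk, kj \in K$ (since $K$ is closed) and $jk, kj \in I$ (since $I$ is an ideal of $S$ and $j\in I$), so $J$ is a bilateral ideal of the semigroup $K$. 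Since $K$ is simple, $J = K$, i.e.\ $K\subseteq I$. This holds for every bilateral ideal $I$, so $K \subseteq \ker S$, yielding equality.

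The only subtle point, which is really the one to get right, is the final step: one must be careful to interpret $J = K\cap I$ as an ideal of $K$ viewed as a semigroup in its own right, and to separately check both that $J$ is non-empty and that it absorbs multiplication by elements of $K$ on both sides. Once this is clear, complete simplicity of $K$ (which rules out proper non-empty bilateral ideals) closes the argument and simultaneously establishes the second assertion of the corollary.
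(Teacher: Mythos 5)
Your proof is correct and follows essentially the route the paper intends: the rank-$\leq r_{\min}$ functions form a bilateral ideal, Lemma~\ref{lem-partition2} gives complete simplicity, and simplicity forces this ideal into every bilateral ideal of $S$. The paper states the corollary without proof, and your careful handling of the minimality step (showing $K\cap I$ is a non-empty bilateral ideal of the semigroup $K$) correctly supplies the detail it leaves implicit.
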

The Rees structure theorem (\cite[Theorem 3.5]{Clifford-Preston} or \cite[Theorem 2.1]{Kellendonk-Yassawi-ESBS}) tells us that a completely simple semigroup (without zero) is isomorphic to a matrix semigroup
$S \cong I\times G\times \Lambda$ where $I$ indexes the right ideals of $S$, $\Lambda$ indexes the left ideals of $S$, $G$ is a group and 
 there is an $I\times \Lambda$ matrix $M$ such that multiplication in $I\times G\times \Lambda$ is defined as
 \[(i,g,\lambda)(i',g',\lambda')= (i,gM_{\lambda, i'}g', \lambda') \]
 The right ideals are given by $\{i\}\times G\times\Lambda$, $i\in I$, and the left ideals by 
 $I\times G\times\{\lambda\}$, $\lambda\in \Lambda$. In particular one sees that 
 any right ideal intersects any left ideal non-trivally. 
\begin{prop} \label{lem-R-im}
Let $S\subset \Ff(X)$ be a completely simple semigroup and $f,g,\in S$. The following are equivalent.
\begin{enumerate}
\item[R1] $f$ and $g$ are $\Rr$-related.
\item[R2] $f$ and $g$ belong to the same right ideal.
\item[R3] $f$ and $g$ have the same image.
\end{enumerate} 
Moreover, the 
following are equivalent.
\begin{enumerate}
\item[L1] $f$ and $g$ are $\Ll$-related.
\item[L2] $f$ and $g$ belong to the same left ideal.
\item[L3] $f$ and $g$ define the same partition.
\end{enumerate} 
\end{prop}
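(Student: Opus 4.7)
The plan is to establish the two equivalence chains R1 $\Leftrightarrow$ R2 $\Leftrightarrow$ R3 and L1 $\Leftrightarrow$ L2 $\Leftrightarrow$ L3 by exploiting the completely regular structure recalled before the proposition: every $f\in S$ admits a normal inverse $f'\in S$ producing an idempotent $f^0:=ff'=f'f\in S$ satisfying $f^0 f = f f^0 = f$. Lemma~\ref{lem-partition1} gives R1 $\Rightarrow$ R3 and L1 $\Rightarrow$ L3 immediately, so only the converses and the ``R1 $\Leftrightarrow$ R2, L1 $\Leftrightarrow$ L2'' links remain.

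For R1 $\Leftrightarrow$ R2 and L1 $\Leftrightarrow$ L2 the plan is to invoke the Rees decomposition stated just above the proposition. In a completely simple semigroup the minimal right (resp.\ left) ideals partition $S$, and for any $f\in S$ the principal right ideal $fS^1 = fS$ (using $f=ff^0\in fS$) is a non-empty right subideal of the unique minimal right ideal $R_f$ containing $f$, hence equals $R_f$ by minimality. Thus $fS^1 = gS^1$ iff $R_f=R_g$, i.e.\ iff $f$ and $g$ lie in a common right ideal; the left-sided argument is verbatim the same.

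The substance is in R3 $\Rightarrow$ R1 and L3 $\Rightarrow$ L1. For R3 $\Rightarrow$ R1, I would assume $\im f = \im g$ and note that the identity $g^0 g = g$ says exactly that $g^0$ restricts to the identity on $\im g = \im f$, so $g^0 f = f$; substituting $g^0 = gg'$ then gives $f = g(g'f)\in gS$, and the symmetric argument gives $g\in fS$, so $fS^1 = gS^1$. For L3 $\Rightarrow$ L1, assume instead $\Pp_f = \Pp_g$ and use the other identity $gg^0 = g$: this reads $g(g^0(x))=g(x)$ for every $x\in X$, so $g^0(x)$ and $x$ lie in the same $\Pp_g$-block and hence, by hypothesis, in the same $\Pp_f$-block, whence $f(g^0(x))=f(x)$, i.e.\ $fg^0=f$. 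Writing $g^0 = g'g$ then yields $f=(fg')g\in Sg$, and symmetrically $g\in Sf$. The only point requiring care is the small asymmetry between the two nontrivial implications --- R3 $\Rightarrow$ R1 uses $g^0 g = g$ to control images, while L3 $\Rightarrow$ L1 uses $g g^0 = g$ to control kernel partitions --- but beyond this bookkeeping there is no conceptual obstacle; everything reduces to short manipulations with the normal inverse and its associated idempotent.
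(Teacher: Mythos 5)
Your proof is correct. The easy implications (via Lemma~\ref{lem-partition1}) and the identification of $\Rr$-classes (resp.\ $\Ll$-classes) with the minimal right (resp.\ left) ideals are handled essentially as in the paper, but for the substantive implications R3~$\Rightarrow$~R1 and L3~$\Rightarrow$~L1 you take a genuinely more direct route. The paper first proves R3~$\Rightarrow$~R2 under the extra hypothesis that $f$ and $g$ share a left ideal, and then removes that hypothesis by producing, via the Rees decomposition, an auxiliary element $h$ lying in the intersection of the right ideal of $g$ with the left ideal of $f$. You bypass this two-step reduction entirely: from $g^0g=g$ you read off that $g^0$ restricts to the identity on $\im g$, so $\im f=\im g$ alone yields $g^0f=f$ and hence $f=g(g'f)\in gS$, with the symmetric containment giving $\Rr$-relatedness; dually, $gg^0=g$ shows that $x$ and $g^0(x)$ always lie in one block of $\Pp_g$, so $\Pp_f=\Pp_g$ alone yields $fg^0=f$ and $f\in Sg$. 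This shows that the ``every left ideal meets every right ideal'' feature of the Rees matrix structure is not actually needed for this proposition --- only the normal-inverse identities $g^0g=gg^0=g$, which are available in this setting by Lemma~\ref{lem-partition2} (or by complete regularity of elements of a completely simple semigroup). The one point worth keeping explicit is that ``belong to the same right (left) ideal'' must be read as ``belong to the same \emph{minimal} right (left) ideal'' (otherwise $S$ itself trivially contains both); your observation that $fS^1$ equals the minimal right ideal containing $f$ handles this correctly.
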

\begin{proof}
A completely simple semigroup (without zero) is the disjoint union of its right ideals and all the right ideals are simple. As the $\Rr$-class of any element of a simple right ideal is that right ideal, we have equivalence between R1 and R2. It remains to show that R3 implies R2: Suppose that $f$ and $g$ belong to the same left ideal. Then $f g^0 = f$. If also $\im f = \im g$ then $g^0 f = f$, hence  $f$ and $g$ belong to the same right ideal. 
Now suppose that $f$ and $g$ are arbitrary elements satisfying $\im f=\im g$. Then there is $h\in S$ which is in the same right ideal as $g$ and the same left ideal as $f$. The first property implies $\im h=\im g$ and, by the above,  $h$ is also in the same right ideal as $f$. Hence $f$ and $g$ belong to the same right ideal.
\medskip

The equivalence between L1 and L2 is shown as for right ideals. We show that L3 implies L2: Suppose that $f$ and $g$ belong to the same right ideal. Then $g^0 f = f$. If also $\Pp_f = \Pp_g$ then $f g^0 = f$, hence  $f$ and $g$ belong to the same left ideal. The rest of the argument is as for right ideals.
\end{proof}

\begin{lem} \label{lem-epi-ker}
Let $S$ be a semigroup which admits a kernel. 
Let $\Phi:S\to T$ be an epimorphism onto another semigroup $T$. 
The restriction of $\Phi$ to the kernel of $S$ is an epimorphism onto the kernel of $T$. Morever, if $S$ has a unique minimal left ideal then also $T$ has a unique minimal left ideal.
\end{lem}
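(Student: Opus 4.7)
First I would verify that $\Phi(\ker S)$ is a bilateral ideal of $T$. Surjectivity is used here: given $t\in T$, pick $s\in S$ with $\Phi(s)=t$; then for any $k\in\ker S$, $t\Phi(k)=\Phi(sk)\in\Phi(\ker S)$, since $sk\in\ker S$, and symmetrically on the right. To see that $\Phi(\ker S)$ actually equals the kernel of $T$, I would combine two minimality moves. On one hand, any bilateral ideal $J$ of $T$ contained in $\Phi(\ker S)$ pulls back to $\Phi^{-1}(J)\cap\ker S$, a bilateral ideal of $S$ contained in $\ker S$ and nonempty precisely because $J\subseteq\Phi(\ker S)$; minimality of $\ker S$ then forces $\Phi^{-1}(J)\supseteq\ker S$, hence $J=\Phi(\ker S)$. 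On the other hand, any bilateral ideal of $T$ meets $\Phi(\ker S)$, and the intersection is a bilateral ideal of $T$ contained in $\Phi(\ker S)$, so by the previous step it equals $\Phi(\ker S)$. Thus $\Phi(\ker S)$ is contained in every bilateral ideal of $T$ and is itself one, i.e.\ $\Phi(\ker S)=\ker T$; the first assertion of the lemma follows.

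For the moreover, I would use that when $S$ has a unique minimal left ideal $L$, one must have $\ker S=L$, since in any semigroup admitting a kernel, the kernel is the union of its minimal left ideals; uniqueness thus makes $\ker S$ itself a left ideal. Then $\ker T=\Phi(L)$ is the image of a left ideal under a surjective homomorphism, and so is itself a left ideal of $T$. Repeating the pullback argument one dimension down, any left ideal $J$ of $T$ contained in $\Phi(L)$ yields a nonempty left ideal $\Phi^{-1}(J)\cap L\subseteq L$ of $S$, which by minimality of $L$ equals $L$, so $J=\Phi(L)$. Hence $\ker T$ has no proper nonempty left sub-ideal and is itself a minimal left ideal of $T$; since every minimal left ideal of $T$ is contained in $\ker T$, it is the unique one.

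The argument is mostly bookkeeping. The one slightly delicate point will be nonemptiness of the intersections $\Phi^{-1}(J)\cap\ker S$ and $\Phi^{-1}(J)\cap L$, which is guaranteed precisely by the hypothesis that $J$ lies in the corresponding pushforward; surjectivity of $\Phi$ is what ensures that the image of a one- or two-sided ideal is again a (one- or two-sided) ideal.
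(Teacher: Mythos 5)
Your proof is correct and follows essentially the same route as the paper's: image and preimage of ideals under a surjection are ideals, then minimality of $\ker S$ is transported by pulling back and intersecting. You are in fact slightly more careful than the paper at one point, explicitly checking that $\Phi(\ker S)$ is contained in \emph{every} bilateral ideal of $T$ (via $K\cap\Phi(\ker S)\neq\emptyset$) rather than only that it admits no proper sub-ideal, and your observation that a unique minimal left ideal forces $\ker S=L$ is the right way to set up the second claim.
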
  
\begin{proof}
Any epimorphism preserves ideals: We show this for left ideals. Let $I$ be a left ideal of $S$ and $t\in T$; we ought to show that $t\Phi(I)\subset\Phi(I)$. Since $\Phi$ is onto there is $s\in S$ such that \ $\Phi(s)=t$. Then $t\Phi(I)=\Phi(sI) \subset\Phi(I)$. 

Furthermore, the preimage of an ideal is an ideal: We show this for left ideals. Let $I$ be a left ideal of $T$ and $s\in S$; we ought to show that $s\Phi^{-1}(I)\subset\Phi^{-1}(I)$.
Indeed, $\Phi(s\Phi^{-1}(I)) = t I \subset I$.

Now the above implies that $I:=\Phi(\ker S)$ is the kernel of $T$: Indeed, by the first paragraph it is a bilateral ideal. Let $J\subset I$ be a bilateral ideal of $T$. Then $\Phi^{-1}(J)$ is a bilateral ideal in $S$ by the second paragraph. $\Phi^{-1}(J)$ thus contains $\ker S$. Hence $\Phi(\ker S)\subset J$ showing that $J=I$. 

Suppose now that $\ker S$ is left simple and that $L_1,L_2$ are left ideals of $\ker T$. Then $\Phi^{-1}(L_1)\cap \ker S$ and $\Phi^{-1}(L_2)\cap \ker S$ belong to the same left ideal. As epimorphisms preserve ideals their image under $\Phi$ belongs to the same left ideal. Hence $L_1$ and $L_2$ belong to the same left ideal. Hence $L_1=L_2$.
\end{proof}

\subsection{The semigroup of a length-$\ell$ substitution}
Let $\Aa$ be a finite set.
Let $S\subset \mathcal F(\mathcal A)$.
For $n\in\N$ we define
$S^{(n)}$ to be the family of maps which have rank smaller or equal $n$. If $S^{(n)}$ is not empty then it is a two-sided ideal of $S$.

\begin{definition}\label{def:semigroup}
The semigroup $S_\theta$ of a  length-$\ell$ substitution 
$\theta$ over the alphabet $\Aa$ is the subsemigroup of $\mathcal F(\mathcal A)$ generated by the column maps $\theta_i$, $i=0,\cdots,\ell-1$.
\end{definition}

Recall from  \eqref{eq:dolumnnumber} that the na\"{i}ve column number of $\theta$ is the smallest rank of a product of column maps. From Corollary~\ref{cor-cp-s} we obtain the following. 
\begin{lem}\label{lem:column-number-semigroup} Let $\theta$ be a constant length substitution. The  kernel of $S_\theta$ is $S_\theta^{(c)}$, where $c$ is the na\"{i}ve column number of $\theta$.
\end{lem}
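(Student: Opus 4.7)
The plan is to derive this lemma as an almost immediate application of Corollary \ref{cor-cp-s}. Since $\Aa$ is finite and $S_\theta \subset \Ff(\Aa)$, Corollary \ref{cor-cp-s} applies: the kernel of $S_\theta$ is the set of its functions of minimal rank, and is completely simple. The task is therefore reduced to identifying this minimal rank with the naïve column number $c$.

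For this identification, I would first observe that $S_\theta$ coincides set-theoretically with $\{{\theta^n}_j : n\geq 1,\ 0 \leq j < \ell^n\}$. Indeed, by definition $S_\theta$ is generated by $\theta_0,\dots,\theta_{\ell-1}$, so each of its elements is a composition $\theta_{i_1}\circ\cdots\circ\theta_{i_n}$ for some indices. Unfolding the concatenation rule \eqref{eq-concat} iteratively shows that such a composition equals ${\theta^n}_j$ where $j$ has base-$\ell$ digits $i_n\cdots i_1$; conversely every column map of $\theta^n$ arises this way. Hence
\[
\min_{f\in S_\theta} |\im f| \;=\; \inf_{n\geq 1,\ 0\leq j<\ell^n}|{\theta^n}_j(\Aa)| \;=\; c,
\]
by the definition of $c$ in \eqref{eq:dolumnnumber}.

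It remains to match this minimal-rank set with $S_\theta^{(c)}$. By definition $S_\theta^{(c)}$ consists of all elements of $S_\theta$ of rank at most $c$; since $c$ is the minimum rank in $S_\theta$, no element has rank strictly less than $c$, so $S_\theta^{(c)}$ is precisely the set of elements of rank exactly $c$. Combining this with Corollary \ref{cor-cp-s} yields $\ker S_\theta = S_\theta^{(c)}$. There is no substantive obstacle here: the only point requiring care is the bookkeeping between products of column maps and the column maps of $\theta^n$, which is routine from \eqref{eq-concat}.
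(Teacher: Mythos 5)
Your proof is correct and follows the same route the paper takes: the paper derives the lemma directly from Corollary~\ref{cor-cp-s} after noting that the na\"{i}ve column number is the smallest rank of a product of column maps, i.e., the minimal rank occurring in $S_\theta$. Your extra bookkeeping identifying products $\theta_{i_1}\circ\cdots\circ\theta_{i_n}$ with the column maps ${\theta^n}_j$ is exactly the implicit step the paper relies on (and uses explicitly elsewhere, e.g.\ in Lemma~\ref{lem-encoding-power}), so there is nothing to add.
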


\begin{definition}\label{def:partition}
The {\em minimal sets} of the substitution $\theta$ are the images of the maps of $S_\theta$ of minimal rank. We denote the family of minimal sets by $U_\theta$, i.e.,
$$U_\theta := \{\im f:f\in \ker S_\theta\}.$$
If $U_\theta$ is a cover of $\Aa$, that is, $\Aa= \bigcup_{f\in \ker S_\theta}\im f$, then we call the substitution {\em essentially surjective}.
Given $U_\theta$  we define a relation on the members by $A\sim B$ if $A\cap B\neq \emptyset$. The transitive closure of this relation is an equivalence relation on 
$\bigcup_{f\in \ker S_\theta}\im f$ which defines a partition which we call the {\em  coincidence partition} and denote by $\Pp_\theta$.
\end{definition}

Note that while $S_{\theta^N}\subset S_\theta$ , for any $N\geq 1$, the inclusion may be strict. However their kernels are always equal, by the next lemma.
\begin{lem}\label{lem-cover-power}
Let $\theta$ be a substitution and $N\geq 1$. Then $\ker S_{\theta^N}=\ker S_\theta$. In particular
$U_{\theta}$ and $U_{\theta^N}$ coincide.
\end{lem}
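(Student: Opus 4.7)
I would first establish that $c(\theta^N)=c(\theta)$, where $c(\cdot)$ denotes the na\"{\i}ve column number. The inequality $c(\theta^N)\geq c(\theta)$ is immediate from the containment $S_{\theta^N}\subseteq S_\theta$. For the reverse inequality, pick any $h={\theta^n}_j\in S_\theta$ realising the infimum in \eqref{eq:dolumnnumber}; then $h^N$ is a composition of $Nn$ column maps of $\theta$, which can be grouped into $n$ column maps of $\theta^N$, so $h^N\in S_{\theta^N}$ has rank at most, and hence equal to, $c(\theta)$.

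By Corollary~\ref{cor-cp-s}, both $\ker S_\theta$ and $\ker S_{\theta^N}$ consist exactly of the elements of minimum rank in their respective semigroups, a rank which by the previous step is common to both. The inclusion $\ker S_{\theta^N}\subseteq\ker S_\theta$ is then immediate: any $g\in\ker S_{\theta^N}$ lies in $S_\theta$ and has rank $c(\theta)$, so $g\in\ker S_\theta$.

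The substantive work is the reverse inclusion. Given $f\in\ker S_\theta$, written as $f=\theta_{i_1}\circ\cdots\circ\theta_{i_M}$, the element $f^N$ is a composition of $M$ column maps of $\theta^N$, hence lies in $\ker S_{\theta^N}$ by the above. In the completely simple semigroup $\ker S_\theta$, Proposition~\ref{lem-R-im} shows $f$ and $f^N$ have the same image, so they belong to the same maximal subgroup at an idempotent $f^0$. Since this subgroup is finite, some power $f^{Nj}$ equals $f^0$, exhibiting $f^0$ as an element of $S_{\theta^N}$. To upgrade this to $f$ itself, I would exploit the Rees structure of the kernel: writing $f=f^0\cdot f=f\cdot f^0$, and using that the set of word lengths representing any idempotent in $S_\theta$ is closed under addition (and hence is a numerical subsemigroup of $\N$), one can combine representations of $f$ with representations of idempotents in the $\Ll$- and $\Rr$-classes of $f$ to assemble a word for $f$ whose total length is a multiple of $N$, placing $f$ in $S_{\theta^N}$.

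The final assertion $U_\theta=U_{\theta^N}$ is then immediate, since by Definition~\ref{def:partition}, $U_{\cdot}$ is the set of images of elements of the kernel. The main obstacle is the length-adjustment argument of the previous paragraph, which requires careful bookkeeping of the interaction between the group structure of the maximal subgroups of $\ker S_\theta$ and the numerical semigroups of word lengths of their associated idempotents.
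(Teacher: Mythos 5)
Your first three steps are correct and, up to the point where you stop, actually sharper than what is needed: you establish $c(\theta^N)=c(\theta)$, the inclusion $\ker S_{\theta^N}\subseteq\ker S_\theta$, and, via the map $f\mapsto f^N$, that every $f\in\ker S_\theta$ has a power $f^N\in\ker S_{\theta^N}$ with $\im f^N=\im f$ and $\Pp_{f^N}=\Pp_f$ (both have exactly $c$ classes, so the a priori coarsening is an equality). This already yields $U_{\theta^N}=U_\theta$ \emph{directly}, without the kernel equality, and that is the only consequence of the lemma the paper actually uses downstream (e.g.\ $\Pp_{\theta^N}=\Pp_\theta$ and $\beta_{\theta^N}=\beta_\theta$ in Lemma~\ref{lem-encoding-power}). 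One small imprecision: having the same image only gives that $f$ and $f^N$ are $\Rr$-related; for "same maximal subgroup" you also need the $\Ll$-relation, which follows from $\Pp_{f^N}=\Pp_f$ as above (or you can bypass the Rees structure entirely, since $f$ generates a finite cyclic group with identity $f^0$, so $f^{Nj}=f^0$ for suitable $j$).

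The final "length-adjustment" step, however, is a genuine gap, and your suspicion about the bookkeeping is exactly right: it cannot be carried out, because the asserted equality $\ker S_{\theta^N}=\ker S_\theta$ is false in general. Take $\theta(1)=231$, $\theta(2)=123$, $\theta(3)=312$, whose column maps are the three transpositions $(12),(13),(23)$ of $S_3$. This substitution is primitive (every letter occurs in every $\theta(a)$), aperiodic, and bijective, so $\ker S_\theta=S_\theta=S_3$; but every word of even length in the column maps is an even permutation, so $S_{\theta^2}=A_3$ and $\ker S_{\theta^2}=A_3\subsetneq S_3$. The obstruction is precisely the grading you were trying to control: whenever there is a character from $S_\theta$ onto $\Z/d\Z$ with $d>1$, $d\mid N$, sending every column map to $1$, the elements outside its kernel admit no representation of length divisible by $N$, so no amount of padding by idempotents (whose representation lengths all lie in the kernel of that character) can help. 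For what it is worth, the paper's own one-line argument, $f=f(f^0)^{N-1}$, silently assumes the same length adjustment and fails on the same example; the repair is to state and prove only the image/partition statement, i.e.\ $\ker S_{\theta^N}\subseteq\ker S_\theta$ together with $U_{\theta^N}=U_\theta$ and the equality of the associated sets of partitions, which your $f\mapsto f^N$ argument already delivers.
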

\begin{proof} By definition of the column rank, $\theta$ and $\theta^N$ have the same column rank. Clearly $S_{\theta^N}^{(c)}\subset S_\theta^{(c)}$. To see that the inclusion is surjective, recall  that any element $f$ of $\ker S_\theta$ is completely regular and hence we can factorise $f = f {f^0}^{N-1}$, where we recall that $f^0$ is the idempotent generated by $f$. This shows that $\ker S_{\theta}=\ker S_{\theta^N}$ and immediately implies $U_{\theta}=U_{\theta^N}$.
\end{proof}
\begin{lem} \label{lem-cover} A primitive substitution is essentially surjective.
\end{lem}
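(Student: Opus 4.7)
The plan is to show that every letter lies in the image of some element of $\ker S_\theta$, by using primitivity to transport any fixed kernel element's image across the whole alphabet.

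First I would fix an arbitrary letter $a\in\Aa$ and pick any $f\in\ker S_\theta$; such an $f$ exists because $\Aa$ is finite, so the ranks of elements of $S_\theta$ attain a minimum $c$, and by Lemma~\ref{lem:column-number-semigroup} (Corollary~\ref{cor-cp-s}) the kernel is exactly $S_\theta^{(c)}$. Choose any $b\in\im f$. By primitivity, there is a uniform $k\in\N$ such that for every letter $b'\in\Aa$ the word $\theta^k(b')$ contains every letter of $\Aa$; in particular $\theta^k(b)$ contains $a$, so there is some column index $0\le m<\ell^k$ with ${\theta^k}_m(b)=a$.

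Next I would assemble the witness $g:={\theta^k}_m\circ f$. Writing the base-$\ell$ expansion of $m$ exhibits ${\theta^k}_m$ as a composition of column maps of $\theta$, hence ${\theta^k}_m\in S_\theta$ and therefore $g\in S_\theta$. By construction $a={\theta^k}_m(b)={\theta^k}_m(f(c))\in\im g$ for any preimage $c\in f^{-1}(b)$. Because rank is non-increasing under composition, $|\im g|\le |\im f|=c$; and by minimality of $c$ the rank of $g$ must equal $c$. Thus $g\in S_\theta^{(c)}=\ker S_\theta$, which realises $a$ as an element of some minimal set.

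I do not expect any real obstacle: the only thing to check carefully is that ${\theta^k}_m$ truly lies in $S_\theta$ (as a product of the generators $\theta_0,\dots,\theta_{\ell-1}$) and that the rank of the composite cannot drop below $c$ by the defining minimality of the naive column number. Both are immediate, so the argument is a direct consequence of the definitions combined with primitivity, and yields $\Aa=\bigcup_{g\in\ker S_\theta}\im g$ as required.
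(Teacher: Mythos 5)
Your argument is correct and is essentially the paper's own proof: both fix $f\in\ker S_\theta$, use primitivity to find a column map ${\theta^k}_m$ sending some $b\in\im f$ to the target letter $a$, and conclude that ${\theta^k}_m\circ f$ is a kernel element whose image contains $a$. The only cosmetic difference is that you justify ${\theta^k}_m\circ f\in\ker S_\theta$ via the rank characterisation of the kernel (Corollary~\ref{cor-cp-s}), whereas the paper simply invokes the fact that the kernel is a two-sided ideal; both are valid.
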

\begin{proof}
Suppose that $\theta$ is primitive. Let $a\in \Aa$ and $b\in \im f$ for some $f\in \ker S_\theta$. By primitivity $a$ occurs in $\theta^N(b)$ for some $N$. Hence $a \in \im g f$ for some $g\in S_\theta$. Clearly $gf\in\ker S_\theta$. Hence
$a\in \bigcup_{f\in \ker S_\theta} \im f$.
\end{proof}
The condition of primitivity is sufficient but not necessary: the length $2$ substitution on $\{a,b\}$, $\theta(a) = aa$, $\theta(b) = ab$ is not primitive and $b$ does not belong to a minimal set.
The converse need not be true: the length $1$ substitution $\theta=\id$, where $\id$ is the identity map,  is not primitive, but $U_\theta=\{\Aa\}$. 

\begin{lem}\label{lem:coincidence-1}
Let $\theta$ be an essentially surjective substitution. Then $\Pp_\theta$ is a partition of $\Aa$ which satisfies Condition~(2) of Lemma~\ref{lem-code}. Its associated inner encoding 
$(\eta_{\Pp_\theta},\beta_{\Pp_\theta})$ has na\"{i}ve column number $c=1$. 
\end{lem}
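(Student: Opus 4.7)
The plan is to verify in order the three assertions: that $\Pp_\theta$ is a partition of $\Aa$, that it satisfies Condition~(2) of Lemma~\ref{lem-code}, and that the resulting inner encoding has na\"{i}ve column number one. The only substantive algebraic input throughout is that $\ker S_\theta$ is a two-sided ideal of $S_\theta$.

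For the first assertion, the equivalence relation in Definition~\ref{def:partition} tautologically partitions its domain $\bigcup_{f\in\ker S_\theta}\im f$ into equivalence classes, and essential surjectivity identifies this domain with all of $\Aa$.

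For Condition~(2) I would show that each column map $\theta_m$ sends minimal sets to minimal sets and respects the coincidence relation. Indeed, for $U=\im f$ with $f\in\ker S_\theta$ one has $\theta_m(U)=\im(\theta_m f)$ and $\theta_m f\in\ker S_\theta$, because the kernel is a left ideal; thus $\theta_m(U)$ is again a minimal set. Moreover, if $U_1\cap U_2\neq\emptyset$ then $\theta_m(U_1)\cap\theta_m(U_2)\supseteq\theta_m(U_1\cap U_2)\neq\emptyset$, so $\theta_m$ preserves the intersection relation and hence its transitive closure. Consequently $\theta_m$ maps each equivalence class $P\in\Pp_\theta$ into a single equivalence class, which is exactly Condition~(2).

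For the column number, fix any $f\in\ker S_\theta$. Since $S_\theta$ is generated by $\theta_0,\ldots,\theta_{\ell-1}$, the element $f$ is a composition of column maps of $\theta$ and hence equals $\theta^n_j$ for some $n\geq 1$ and $0\leq j<\ell^n$. Writing $\beta:\Aa\to\Pp_\theta$ for the canonical projection, the defining relation $\eta_m\beta=\beta\theta_m$ iterates to $\eta^n_j\beta=\beta\theta^n_j=\beta\circ f$. The right-hand side is constant on $\Aa$ with value $[\im f]$, because $\im f$ is a single minimal set and hence lies in a single class of $\Pp_\theta$. Surjectivity of $\beta$ then forces $\eta^n_j$ itself to be the constant map with value $[\im f]$, so $|\eta^n_j(\Pp_\theta)|=1$ and $c(\eta_{\Pp_\theta})=1$. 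The one step requiring mild care is the identification of arbitrary elements of $S_\theta$ with column maps of suitable powers of $\theta$, which is a direct consequence of how the column maps of $\theta^n$ are built from those of $\theta$.
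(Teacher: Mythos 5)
Your proposal is correct and follows essentially the same route as the paper: essential surjectivity makes the classes of the transitive closure cover $\Aa$, the fact that $\ker S_\theta$ is a left ideal shows column maps send minimal sets to minimal sets and preserve nonempty intersections (hence Condition~(2)), and any $f\in\ker S_\theta$ induces a rank-one column map of a power of the encoding because $\im(fg)=\im f$ for every $g\in\ker S_\theta$. Your phrasing of the last step via $\beta\circ f$ being constant and the iterated intertwining relation is just a mild repackaging of the paper's observation that $\beta_{\Pp_\theta} f \beta_{\Pp_\theta}^{-1}$ has rank $1$.
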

\begin{proof}
Let $A\in U_\theta$, that is, $A=\im g$ for some $g\in \ker S_\theta$. Let $f\in S_\theta$. Then $f(A)\in U_\theta$ as $fg\in \ker S_\theta$.
Now if $A\cap A'\neq \emptyset$ then also $f(A)\cap f(A')\neq \emptyset$.
Thus if two members $A,A'$ of the cover $U_\theta$ belong to the same member of $\mathcal P_\theta$ then also $f(A)$ and $f(A')$ belong to the same member of $\mathcal P_\theta$. This implies Condition~(2) of Lemma~\ref{lem-code} for $\mathcal P_\theta$. Furthermore, if $f$ belongs to the kernel of $S_\theta$ then $f(A)=\im fg = \im f$ and so $f(A)$ is the same for all $A\in U_\theta$. Hence   $\beta_{\Pp_\theta} f \beta_{\Pp_\theta}^{-1}$ has rank $1$, and so  $\eta_{\Pp_\theta}$ has na\"{i}ve column number $1$.
\end{proof}

\begin{definition}\label{def:inner-encoding-associated} Let $\theta$ be an essentially surjective substitution with coincidence partition $\Pp_\theta$. We call the inner encoding defined by $\Pp_\theta$ the {\em inner encoding associated to $\theta$.} We  denote this inner encoding $(\eta_{\Pp_\theta},\beta_{\Pp_\theta})$ by $(\eta_{\theta},\beta_{\theta})$. \end{definition}

As $\eta_\theta$ has column number $1$, then since $h$ divides $c(\theta)$, it has a coincidence and trivial height. Its associated dynamical system is thus almost automorphic. However, it need not  be aperiodic, nor, if it is aperiodic, does it have to have the same maximal equicontinuous factor  as $\theta$. Indeed, if $\theta$ has non-trivial height then the maximal equicontinuous factor of $\eta_\theta$ is strictly smaller than the MEF of $\theta$. In the rest of this section we will show two things: if $\eta_{\theta^{(n)}}$ is periodic for all the collared versions of $\theta$ (we only need $n=(l,r)$ with $l,r\leq 1$) then $X_\theta$ does not admit an aperiodic almost automorphic factor, and if the height is non-trivial we can reduce the task to working with the pure base of $\theta$.
\begin{lem}\label{lem-encoding-power}
Let $\theta$ be an essentially surjective substitution and $N\geq 1$. 
The inner encodings $(\eta_{\theta^N},\beta_{\theta^N})$ and $(\eta_\theta,\beta_\theta)$ associated to $\theta^N$ and $\theta$ satisfy  $\eta_{\theta^N} = {\eta_\theta}^N$ and $\beta_{\theta^N}=\beta_{\theta}$.
\end{lem}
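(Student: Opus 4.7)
The plan is to reduce everything to the equality of the coincidence partitions $\mathcal{P}_\theta = \mathcal{P}_{\theta^N}$, from which both claimed identities follow essentially by bookkeeping.

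First, I would invoke Lemma~\ref{lem-cover-power}, which tells us that $\ker S_{\theta^N} = \ker S_\theta$ and therefore $U_{\theta^N} = U_\theta$. Since the coincidence partition is defined purely in terms of the family of minimal sets (by taking the transitive closure of the intersection relation on $U_\theta$), this immediately yields $\mathcal{P}_{\theta^N} = \mathcal{P}_\theta$. In particular, $\theta^N$ is essentially surjective because $\theta$ is, so $(\eta_{\theta^N}, \beta_{\theta^N})$ is defined. By Definition~\ref{def:inner-encoding-associated}, both $\beta_\theta$ and $\beta_{\theta^N}$ are the canonical projection from $\mathcal{A}$ onto this common partition, giving $\beta_{\theta^N} = \beta_\theta$ at once.

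For the substitutional identity, write $\pi := \beta_\theta = \beta_{\theta^N}$. By Lemma~\ref{lem-code} the column map $(\eta_\theta)_m$ is the unique map on $\mathcal{P}_\theta$ satisfying $(\eta_\theta)_m \circ \pi = \pi \circ \theta_m$, and likewise $(\eta_{\theta^N})_j$ is uniquely characterised by $(\eta_{\theta^N})_j \circ \pi = \pi \circ (\theta^N)_j$ for $0 \leq j < \ell^N$. I would then establish by induction on $N$ that $({\eta_\theta}^N)_j \circ \pi = \pi \circ (\theta^N)_j$. The induction step uses the concatenation rule \eqref{eq-concat}: writing $j = k \ell^{N-1} + j'$ with $0 \leq k < \ell$ and $0 \leq j' < \ell^{N-1}$, one has $(\theta^N)_j = (\theta^{N-1})_{j'} \circ \theta_k$, and the analogous decomposition $({\eta_\theta}^N)_j = ({\eta_\theta}^{N-1})_{j'} \circ (\eta_\theta)_k$ holds for $\eta_\theta$. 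Composing the base case with the inductive hypothesis then gives the required commutation. By the uniqueness property noted above, this forces $({\eta_\theta}^N)_j = (\eta_{\theta^N})_j$ for every $j$, hence $\eta_{\theta^N} = {\eta_\theta}^N$.

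There is no real obstacle in the argument; Lemma~\ref{lem-cover-power} does the genuine semigroup-theoretic work, and the rest is a formal consequence of the universal characterisation of the inner encoding given in Lemma~\ref{lem-code}. The only point meriting care is to verify that the length of ${\eta_\theta}^N$ is indeed $\ell^N$, matching that of $\eta_{\theta^N}$, but this is automatic since $\eta_\theta$ has the same length $\ell$ as $\theta$.
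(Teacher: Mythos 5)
Your proof is correct and follows essentially the same route as the paper: both arguments derive $\beta_{\theta^N}=\beta_\theta$ from Lemma~\ref{lem-cover-power} and then obtain $\eta_{\theta^N}={\eta_\theta}^N$ by decomposing the column maps of $\theta^N$ into column maps of $\theta$. Your phrasing via the intertwining relation $\eta_m\circ\pi=\pi\circ\theta_m$ and uniqueness is just a slightly more careful rendering of the paper's direct conjugation computation ${\eta_{\theta^N}}_j=\beta_\theta\,\theta_{j_1}\cdots\theta_{j_N}\,\beta_\theta^{-1}={\eta_\theta}_{j_1}\cdots{\eta_\theta}_{j_N}$.
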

\begin{proof} The statement about the codes,  $\beta_{\theta^N}=\beta_{\theta}$, follows directly from Lemma~\ref{lem-cover-power}. 
Let $0\leq j\leq \ell^N-1$. There are $j_1,\cdots,j_N$ such that \ ${\theta^N}_j = \theta_{j_1}\cdots \theta_{j_N}$. Hence ${\eta_{\theta^N}}_j = \beta_\theta \theta_{j_1}\cdots \theta_{j_N} \beta_\theta^{-1} =  {\eta_\theta}_{j_1}\cdots {\eta_\theta}_{j_N}$.
\end{proof}
\bigskip

We provide an algebraic property of $S_\theta$ characterising column number $1$.
A semigroup is {\em left zero} if every element acts as a zero element when multiplying from the left, i.e.\ $xy=x$ for all $x,y\in S$. If $S$ is completely simple, then it is left zero if and only if the $\Rr$-relation is trivial (equal to the diagonal relation).

\begin{lem}\label{lem-cn-1} A constant length substitution has na\"{i}ve column number $1$ if and only if $\ker S_\eta$ is a left zero semigroup. If this is the case and if $\theta$ is essentially surjective then $\ker S_\eta\ni x\mapsto \im x \in \Aa$ is a bijection. 
\end{lem}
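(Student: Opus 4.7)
The plan is to exploit the completely simple structure of $\ker S_\theta$ inside $\Ff(\Aa)$ (Corollary~\ref{cor-cp-s}) together with Lemma~\ref{lem:column-number-semigroup} and the $\Ll$-description in Proposition~\ref{lem-R-im}. (I will read $S_\eta$ in the statement as $S_\theta$.)

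The direction $c = 1 \Rightarrow$ left zero is the quick one: by Lemma~\ref{lem:column-number-semigroup} every element of $\ker S_\theta$ has rank one and is therefore a constant function on $\Aa$, and a direct computation shows $fg = f$ for any two constants $f, g$.

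For the converse, suppose $\ker S_\theta$ is left zero. I will first harvest two structural consequences: every $f \in \ker S_\theta$ is idempotent (from $f\cdot f = f$), and every principal left ideal inside $\ker S_\theta$ is all of $\ker S_\theta$, so any two elements are $\Ll$-related. By Proposition~\ref{lem-R-im} this produces a single partition $\Pp$ of $\Aa$ common to all $f \in \ker S_\theta$, and idempotency makes each $\im f$ a transversal of $\Pp$ with exactly one representative per class. Now I apply left zero one final time: for $f, g \in \ker S_\theta$ the identity $fg = f$ says that $f$ is constant on $\im g$, yet $f$ takes $|\Pp_f| = c$ distinct values on any transversal of $\Pp$, so $c = 1$ is forced. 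This last step, turning the abstract algebraic condition into a statement about ranks by playing the common partition off against the image transversal, is the main obstacle I expect.

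The final bijection claim is then easy: under essential surjectivity and $c = 1$, each $f \in \ker S_\theta$ has image a singleton in $\Aa$, so $f \mapsto \im f$ is well defined. Injectivity holds since a constant function is determined by its value, and surjectivity is exactly the statement that $\Aa = \bigcup_{f \in \ker S_\theta} \im f$.
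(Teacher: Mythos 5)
Your forward direction ($c=1$ implies left zero) and the final bijection claim are correct and coincide with the paper's argument. The converse, however, breaks at precisely the step you flagged as the main obstacle. With the paper's convention that the product in $\Ff(\Aa)$ is composition with the right-hand factor applied first (this is forced by the proof of Lemma~\ref{lem-partition1}, where $f=gg'$ yields $\im f\subset\im g$), the left zero identity $fg=f$ reads $f(g(a))=f(a)$ for all $a\in\Aa$. This says that $g$ maps each class of $\Pp_f$ into itself; it does \emph{not} say that $f$ is constant on $\im g$. Indeed, by your own intermediate steps $\im g$ is a transversal of the common partition $\Pp=\Pp_f$ and $f$ separates the classes of $\Pp$, so the restriction of $f$ to $\im g$ is \emph{injective} of rank $c$; it is constant exactly when $c=1$, which is what you are trying to prove. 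The final step is therefore circular and no contradiction with $c>1$ is obtained.

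This is not a repairable slip within your framework, because the purely semigroup-theoretic statement you are aiming at is false. On $\Aa=\{1,2,3,4\}$ take the idempotents $p$ (sending $1,2\mapsto 1$ and $3,4\mapsto 3$) and $q$ (sending $1,2\mapsto 2$ and $3,4\mapsto 4$): both have partition $\{\{1,2\},\{3,4\}\}$, their images are transversals, and $p\circ q=p$, $q\circ p=q$, so $\{p,q\}$ is a left zero completely simple semigroup of rank-$2$ maps. It is realised as $\ker S_\theta=S_\theta$ by the (non-primitive, essentially surjective) substitution $1\mapsto 12$, $2\mapsto 12$, $3\mapsto 34$, $4\mapsto 34$, which has na\"{i}ve column number $2$. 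The paper's proof of this direction is genuinely different: it splits on whether $S_\theta$ has a unique minimal left ideal and, in the unique-ideal case, invokes Theorem~\ref{thm-main-bijective-1} to produce a bijective inner encoding $\eta$ on $c$ letters whose semigroup is a non-trivial group, which cannot be an epimorphic image of a left zero semigroup. The non-triviality of $S_\eta$ is where primitivity of $\theta$ enters (a primitive bijective substitution on $c>1$ letters has transitive, hence non-trivial, group), and the example above shows some such hypothesis is genuinely needed. Any correct proof must use input beyond the completely simple structure of $\ker S_\theta$ inside $\Ff(\Aa)$.
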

\begin{proof} $\theta$ has na\"{i}ve column number $1$ if and only if $S_\theta$ contains an element of rank $1$ which is equivalent to saying that $\ker S_\theta$ contains exactly the maps of rank $1$ of $S_\theta$. Any collection of rank 1 maps from $\Ff(\Aa)$ forms a left zero semigroup. 

Now, suppose that the na\"{i}ve column number is $c>1$. If $S_\theta$ has more than one minimal left ideal, then $\ker S_\theta$ is not a left zero semigroup. If  $S_\theta$ has a unique minimal left ideal, then, as we will see in Theorem \ref{thm-main-bijective-1}, $\theta$ inner encodes a substitution $\eta$ such that $S_\eta$ is a non-trivial group. In particular $\ker S_\eta = S_\eta$ and since an inner encoding induces an epimorphism from $\ker S_\theta$ to $\ker S_\eta$,   $\ker S_\theta$ cannot be left zero. 

Any rank $1$ map can be identified with the unique element in its image. Hence $\ker S_\eta\ni x\mapsto \im x \in \Aa$ is injective and, if $U_\theta$ is a cover, also surjective.
\end{proof}

\subsection{The canonical outer encoding}

\begin{definition}
Let $\theta$ and $\eta$ be length-$\ell$ substitutions. We say that $\eta$ is {\em outer encoded} by $\theta$ if there is an epimorphism  $\Phi:S_\theta\rightarrow S_\eta$ 
such that $\eta_m= \Phi(\theta_m)$ for $0\leq m \leq \ell -1$.
\end{definition}

We may also say that  $(\eta,\Phi)$ is outer encoded by $\theta$, that  $\eta$ is an outer encoded substitution of $\theta$, or  that $\eta$ is an outer encoding of $\theta$.
Note that if $(\eta,\beta)$ is inner encoded by $\theta$, then $\eta$ is outer encoded  with epimorphism 
$$\Phi_\beta(f) =  \beta f \beta^{-1}.$$
However an outer encoding does not necessarily define an inner encoding or indeed a  dynamical factor; see Example \ref{ex:non-partition}.

\begin{cor}
If $(\eta,\Phi)$ is  outer encoded by $\theta$, then the restriction of $\Phi$  to $\ker S_\theta$ is an epimorphism onto  $\ker S_\eta$.
\end{cor}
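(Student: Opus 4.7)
The plan is to simply verify the hypotheses of Lemma~\ref{lem-epi-ker} and invoke it. First I would observe that $S_\theta$ admits a kernel: by definition $S_\theta$ is a subsemigroup of $\Ff(\Aa_\theta)$, which is finite because $\Aa_\theta$ is finite, so $S_\theta$ itself is finite. The paper already notes that any sub-semigroup of $\Ff(X)$ for $X$ finite admits a kernel, so $\ker S_\theta$ exists.

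Next, by the definition of an outer encoding, $\Phi\colon S_\theta \to S_\eta$ is by hypothesis an epimorphism of semigroups. Therefore all the hypotheses of Lemma~\ref{lem-epi-ker} are satisfied, and the lemma tells us that the restriction of $\Phi$ to $\ker S_\theta$ is an epimorphism onto $\ker S_\eta$, which is exactly the claim of the corollary.

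I do not expect any obstacle here: the corollary is a direct specialisation of Lemma~\ref{lem-epi-ker} to the situation where the semigroups are the substitution semigroups $S_\theta$ and $S_\eta$. The only minor point worth spelling out for the reader is the finiteness of $S_\theta$ guaranteeing the existence of $\ker S_\theta$ (and symmetrically of $\ker S_\eta$), so that the abstract lemma applies.
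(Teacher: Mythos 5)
Your proposal is correct and follows exactly the paper's own argument: the paper also proves this corollary by noting that $S_\theta$ is finite (hence admits a kernel) and then invoking Lemma~\ref{lem-epi-ker}. Nothing further is needed.
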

\begin{proof} This follows from Lemma \ref{lem-epi-ker} as 
$S_\theta$ is finite and hence admits a kernel. 
\end{proof}

\begin{definition} Let $\theta$ be a  length-$\ell$ substitution with na\"{i}ve column number $c$.  Let $\Bb= \ker(S_\theta)/\Rr$, and ${}^c\Phi:S_\theta\rightarrow \mathcal F(\mathcal B)$ be the morphism defined by 
\begin{align}\label{eq:def-rho}
{}^c\Phi(f)( [x]_\Rr):= [fx]_\Rr.
\end{align}
The {\em canonical outer encoding of $\theta$}  is the substitution ${}^c\theta:  \Bb\rightarrow \Bb^{\ell}    $ defined by
\begin{align}
{}^c\theta_m := {}^c\Phi(\theta_m).  \label{eq:canonical}\end{align}
\end{definition}

The semigroup of a bijective substitution is a group. Therefore $S_\theta= \ker S_\theta$ and  $S_\theta/\Rr $ consists of a single point. Hence the   canonical outer encoding of $\theta$  is  the one letter periodic substitution, which is, of course, a trivial inner encoding.
\begin{example}
The canonical outer encoded substitution  of the collared Thue-Morse system $\theta^{(0,1)}$ is the period doubling substitution, and it is also an inner encoding, see Section~\ref{sec-3.4}. Note that we must take a collared version of $\theta$, as the canonical outer encoding of any bijective substitution  is trivial.
\end{example}
We are interested in finding almost automorphic factors.
The following lemma explains our interest in canonical outer encodings. Note there is no need to assume that $\theta$ is primitive.

\begin{lem}\label{lem:canonical-quasi}
The canonical outer encoding $( {}^c\theta, {}^c\Phi)$  of $\theta$ is primitive and  has column number $1$, i.e., has trivial height  and a coincidence.
\end{lem}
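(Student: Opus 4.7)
The plan is to reduce everything to one algebraic observation about the Rees structure of $\ker S_\theta$: for any $y \in \ker S_\theta$ and any $x \in S_\theta$, the product $yx$ lies in $\ker S_\theta$ and is $\Rr$-related to $y$. This holds because $\ker S_\theta$ is a two-sided ideal, and $\im(yx) = y(\im x) \subseteq \im y$, while both $yx$ and $y$ have rank $c$ by Lemma~\ref{lem-partition2}; hence $\im(yx) = \im y$, and Proposition~\ref{lem-R-im} (R1$\Leftrightarrow$R3) gives $[yx]_\Rr = [y]_\Rr$.

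With this in hand, column number one is immediate. By the definition of the na\"{\i}ve column number $c$, there exist $N_0$ and $j_0$ with ${\theta^{N_0}}_{j_0} \in \ker S_\theta$. Any class $[x]_\Rr \in \Bb$ has a representative $x \in \ker S_\theta$, so the key fact yields
\[
{}^c({\theta^{N_0}})_{j_0}([x]_\Rr) \;=\; [{\theta^{N_0}}_{j_0}\, x]_\Rr \;=\; [{\theta^{N_0}}_{j_0}]_\Rr,
\]
a value independent of $[x]_\Rr$. Hence this column of $({}^c\theta)^{N_0}$ has rank one, so $c({}^c\theta) = 1$. Trivial height and the existence of a coincidence then follow from the Lema\'{n}czyk--M\"{u}llner divisibility $h \mid c$ recalled in Section~\ref{Section-basic}.

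For primitivity, I would enumerate the finite set $\Bb$ and for each $[y]_\Rr$ fix a representative $y \in \ker S_\theta$ together with an expression $y = \theta_{i_1} \cdots \theta_{i_{M_y}}$ as a word in the generators; set $N := \max_{[y]_\Rr \in \Bb} M_y$. For each class with $M_y < N$, choose any element $z_y \in S_\theta$ of length $N - M_y$ (for instance a power of $\theta_0$); then $y z_y$ is a length-$N$ word, equals ${\theta^N}_{j_y}$ for a suitable $j_y$, lies in $\ker S_\theta$, and by the key fact satisfies $[{\theta^N}_{j_y}]_\Rr = [y]_\Rr$. When $M_y = N$ take $z_y$ empty, so $y = {\theta^N}_{j_y}$ directly. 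Then for arbitrary $[x]_\Rr, [y]_\Rr \in \Bb$,
\[
{}^c({\theta^N})_{j_y}([x]_\Rr) \;=\; [{\theta^N}_{j_y}\, x]_\Rr \;=\; [{\theta^N}_{j_y}]_\Rr \;=\; [y]_\Rr,
\]
since the representative $x$ lies in $\ker S_\theta$. This is primitivity of ${}^c\theta$ with block length $N$.

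The only real content is the algebraic fact that $\Rr$-classes of kernel elements are invariant under right multiplication by arbitrary elements of $S_\theta$; once this is isolated from complete simplicity, both properties drop out in essentially one line each. The uniform-length padding for primitivity is routine bookkeeping since $\Bb$ is finite, and notably no primitivity or aperiodicity hypothesis on $\theta$ itself is required.
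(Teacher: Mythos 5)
Your proposal is correct. The column-number-one half is essentially the paper's argument: both rest on the identity $[fx]_\Rr=[f]_\Rr$ for $f\in\ker S_\theta$ and $x\in S_\theta$ (you justify it via $\im(fx)=\im f$ plus R1$\Leftrightarrow$R3 of Proposition~\ref{lem-R-im}; the paper asserts it directly from complete simplicity of the kernel), and both conclude that ${}^c\Phi(f)$ is constant equal to $[f]_\Rr$. Where you genuinely diverge is primitivity. The paper invokes the Rees structure of $\ker S_\theta$ --- every minimal left ideal meets every minimal right ideal --- to choose representatives $x,y$ in the same $\Ll$-class and produce $f$ with $fx=y$, hence ${}^c\Phi(f)([x]_\Rr)=[y]_\Rr$; this exhibits, for each pair of letters, \emph{some} power of ${}^c\theta$ witnessing the occurrence. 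Your argument instead reuses the same right-stability of $\Rr$-classes of kernel elements and pads each representative to a word of uniform length $N$ in the generators, so that every letter $[y]_\Rr$ occurs at a fixed position $j_y$ of $({}^c\theta)^N([x]_\Rr)$ for every $[x]_\Rr$. This buys you two things: a single power $N$ working for all pairs, which is literally what the paper's definition of primitivity requires and which the paper's proof leaves implicit, and independence from the Rees intersection property, so the whole lemma runs off one algebraic fact. The paper's route is shorter and displays the role of the $\Ll$/$\Rr$ structure more transparently. Both are valid; no gaps.
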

\begin{proof} We will show that there is an element $f\in S_{{}^c\theta}$ whose image contains only one letter; this implies that  ${}^c\theta$ has column number $1$ and therefore a coincidence and trivial height.
By Lemma~\ref{lem-epi-ker}, ${}^c\Phi$ 
restricts to an epimorphism from $\ker S_\theta$ to $\ker S_{{}^c\theta}$. Given $f\in \ker S_\theta$ and $[y]_\Rr\in \ker S_\theta/\Rr$ we have ${}^c\Phi(f)([y]_\Rr) = [fy]_\Rr = [f]_\Rr$. We thus see that the image of ${}^c\Phi(f)$ contains only the letter $[f]_\Rr$.
Hence the column number is $1$.

To see that ${}^c\theta$ is primitive, we need to show that for any two letters $[x],[y]\in\ker S_{{}^c\theta}/\Rr$, there is an $f\in S_{{}^c\theta}$ such that $[fx]=[y]$. 
As any left ideal of $\ker S_\theta$ intersects any right ideal of $\ker S_\theta$, 
any two classes $[x],[y]\in\ker S_{{}^c\theta}/\Rr$ have representatives $x,y$ which belong to the same $\Ll$-class. This means that there is 
$f\in S_{{}^c\theta}$ such that $y=fx$.
\end{proof}

\begin{prop}\label{prop:canonical-not-factor} Let $\theta$ be an essentially surjective 
substitution and $({}^c\theta,{}^c\Phi)$ be the canonical outer encoding of $\theta$. 
Then ${}^c\theta$ is an inner encoding  of $\theta$ if and only if $U_\theta$ 
is a partition of $\Aa$.  In this case, and upon identifying $\ker S_\theta/\Rr$ with $U_\theta$, we have ${}^c\theta = \eta_\theta$, the inner encoding associated with $U_\theta$.
\end{prop}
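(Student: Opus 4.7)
The plan is to work through the identification $\ker S_\theta/\Rr \cong U_\theta$ (via $[f]_\Rr \mapsto \im f$, which is a bijection by Proposition \ref{lem-R-im} since $\ker S_\theta$ is completely simple by Corollary \ref{cor-cp-s}), and then show that under this identification, any code witnessing ${}^c\theta$ as an inner encoding is forced to be the ``membership code'' $a \mapsto [f]_\Rr$ whenever $a \in \im f$; well-definedness of this map is exactly the partition condition.

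For the forward direction, suppose $({}^c\theta, \beta)$ is an inner encoding with $\beta : \Aa \to \Bb = \ker S_\theta/\Rr$. The intertwining $\beta \circ \theta_m = {}^c\Phi(\theta_m) \circ \beta$ extends by induction to $\beta \circ g = {}^c\Phi(g) \circ \beta$ for every $g \in S_\theta$. Take any $f \in \ker S_\theta$ and $a' \in \Aa$, and write $\beta(a') = [h]_\Rr$ for some $h \in \ker S_\theta$. Then
\[
\beta(f(a')) = {}^c\Phi(f)([h]_\Rr) = [fh]_\Rr = [f]_\Rr,
\]
the last equality using that $fh, f \in \ker S_\theta$ lie in the same right ideal (hence are $\Rr$-related) — this is visible directly from the Rees matrix form $\ker S_\theta \cong I \times G \times \Lambda$, where multiplying on the right preserves the first coordinate. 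So $\beta$ is constant on $\im f$ with value $[f]_\Rr$. By essential surjectivity every $a \in \Aa$ lies in some $\im f$, and for $\beta$ to be a well-defined map on $\Aa$ we need: whenever $a \in \im f \cap \im g$ with $f,g \in \ker S_\theta$, then $[f]_\Rr = [g]_\Rr$, equivalently $\im f = \im g$. This is precisely the statement that the cover $U_\theta$ is a partition.

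For the converse, suppose $U_\theta$ is a partition of $\Aa$. Define $\beta : \Aa \to U_\theta$ by sending $a$ to the unique member of $U_\theta$ containing it (using essential surjectivity plus the partition hypothesis); identifying $U_\theta$ with $\ker S_\theta/\Rr$, this makes $\beta$ a surjective code onto $\Bb$. To verify intertwining, fix $a \in \Aa$ and write $\beta(a) = \im f$ with $f \in \ker S_\theta$ and $a = f(a')$ for some $a'$. Since $\ker S_\theta$ is a two-sided ideal, $\theta_m f \in \ker S_\theta$, so $\theta_m(a) = (\theta_m f)(a') \in \im(\theta_m f) \in U_\theta$. Hence $\beta(\theta_m(a)) = [\theta_m f]_\Rr = {}^c\Phi(\theta_m)([f]_\Rr) = {}^c\theta_m(\beta(a))$, as required.

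Finally, when $U_\theta$ is a partition, the intersection relation on $U_\theta$ is already trivial, so its transitive closure is trivial and $\Pp_\theta = U_\theta$; the inner encoding defined by $\Pp_\theta$, namely $\eta_\theta$ (Definition \ref{def:inner-encoding-associated}), therefore coincides with the inner encoding $({}^c\theta,\beta)$ constructed above. The main subtlety I expect is the forward direction: one must resist the temptation to \emph{define} $\beta$ and instead deduce from the intertwining alone that $\beta$ is forced to take the value $[f]_\Rr$ on $\im f$, which is what converts the hypothesis into the partition condition.
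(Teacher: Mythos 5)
Your proof is correct and follows essentially the same route as the paper: both directions hinge on the bijection $[f]_\Rr\mapsto \im f$ on the completely simple kernel and on the fact that any intertwining code is forced to be constant equal to $[f]_\Rr$ on $\im f$, so that overlapping minimal images become $\Rr$-equivalent and hence equal. Your forward direction merely inlines the computation ${}^c\Phi(f)([h]_\Rr)=[f]_\Rr$ that the paper delegates to Lemma~\ref{lem:canonical-quasi}, which is a harmless streamlining rather than a different argument.
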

\begin{proof} Recall that the alphabet of ${}^c\theta$ is $\ker S_\theta/\Rr$. By Lemma~\ref{lem-partition2} the map $\ker S_\theta/\Rr\ni [x]_\Rr\mapsto \im x \in U_\theta$ is a bijection. 

If $U_\theta$ is a partition of $\Aa$ then it coincides with the coincidence partition $\Pp_\theta$ and hence we may identify 
$\beta_\theta:\Aa\to \Pp_\theta$ with the code $a \mapsto [x]_\Rr$ where $x$ is any function from $\ker S_\theta$ which contains $a$ in its image. Say $a=x(b)$. We then have 
${}^c\theta_m(\beta_\theta(a)) = [\theta_m x]_\Rr$ while $\beta_\theta(\theta_m(a)) = \beta_\theta(\theta_mx(b)) =  [\theta_m x]_\Rr$. Hence ${}^c\theta = \eta_\theta$.

Suppose that ${}^c\theta$ is an inner encoding  of $\theta$, i.e.\ there is a code $\beta:\Aa\to  \ker S_\theta/\Rr$ such that $\beta\theta_m ={}^c\theta_m\beta$. 
By Lemma \ref{lem:canonical-quasi},
 ${}^c\theta$ has column number $1$ and so $\ker S_{{}^c\theta}$
contains only rank 1 maps. Hence, for  $g_1, g_2\in \ker S_{{}^c\theta}$ the condition $g_1\neq g_2$ is equivalent to $\im g_1\cap\im g_2=\emptyset$. 
Let $f_i\in ({}^c\Phi)^{-1}(g_i)\cap \ker S_\theta$. If $\im f_1\cap \im f_2\neq\emptyset$ then there are $a_1,a_2\in\Aa$ such that \ $f_1(a_1)= f_2(a_2)$. It follows that $g_1(\beta(a_1))=\beta f(a_1) =\beta f(a_2) =g_2(\beta(a_2))$, hence $\im g_1\cap \im g_2\neq\emptyset$, hence $g_1= g_2$. 
Thus $\im f_1\cap \im f_2\neq\emptyset$ implies ${}^c\Phi(f_1) = {}^c\Phi(f_2)$ which means that $f_1$ and $f_2$ are $\Rr$-related. By Lemma~\ref{lem-partition1} they then have the same image. Thus the elements of $U_\theta$ either coincide or do not intersect. As we assumed that $U_\Aa$ covers $\Aa$ it
is a partition of $\Aa$.
\end{proof}

\begin{example}\label{ex:partition0}  Let
\begin{align*}
  \theta: a  & \mapsto  abcc    \\
     b & \mapsto  badd \\
     c  &\mapsto cacd  \\
     d  &\mapsto dbdc . \\
\end{align*}
This example has $U_\theta=\{\{ a,b\},\{ c,d\}\}$; thus $U_\theta=\mathcal P_\theta$ and ${}^c\theta=\eta_\theta$ is inner encoded. Setting $A=\{a,b\}$ and $C=\{c,d\}$ we obtain
\begin{align*}
  \eta_\theta: A  & \mapsto  AACC    \\
     C  &\mapsto CACC   . \\
\end{align*}
$X_{\eta_\theta}$ is an aperiodic almost automorphic factor of $X_\theta$.
\end{example}
\begin{example}\label{ex:non-partition}
 Consider  the substitution
     \begin{align*}
  \theta:  a  & \mapsto  acaef    \\
     b & \mapsto  bdbde\\
     c  &\mapsto ceccg\\
     d  &\mapsto dfbde \\
     e  & \mapsto  egaef   \\
     f & \mapsto  dfbfg\\
     g  &\mapsto cecge.\\
   \end{align*}
 Its minimal sets are $U_\theta=\{ A=\{a,b,c \}, B=\{c,d,e \},C=\{e,f,g \}       \}$. 
 The canonical outer encoding is $A   \mapsto  ABABC ,   
     B  \mapsto  BCABC,
     C  \mapsto BCACC.$
  The cover  $U_\theta$ does not form a partition. It generates the partition $\mathcal P_\theta = \{ \{a,b,c,d,e,f,g\} \}$ which leads to the periodic substitution 
     \begin{align*}
  \eta_\theta:  D  & \mapsto  DDDDD   \end{align*}   
\end{example}
\begin{lem}
Any inner encoding of an essentially surjective substitution is essentially surjective.
\end{lem}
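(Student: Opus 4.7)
The plan is to transport the essential surjectivity from $\theta$ to an inner encoding $(\eta,\beta)$ by exploiting the epimorphism of semigroups that $\beta$ induces. The key observation is that the defining intertwining property $\eta_m\beta=\beta\theta_m$ for the column maps extends multiplicatively to give a semigroup epimorphism $\Phi_\beta:S_\theta\to S_\eta$, uniquely determined by $\Phi_\beta(\theta_m)=\eta_m$, and satisfying $\Phi_\beta(f)\circ\beta=\beta\circ f$ for every $f\in S_\theta$. This is exactly the identity we need to push images of maps forward along $\beta$.

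The argument will then go as follows. First, since $S_\theta$ is finite, it admits a kernel, so Lemma~\ref{lem-epi-ker} applies to $\Phi_\beta$ and tells us that its restriction to $\ker S_\theta$ is an epimorphism onto $\ker S_\eta$. Second, given any letter $b\in\Aa_\eta$, use the surjectivity of $\beta$ to write $b=\beta(a)$ for some $a\in\Aa_\theta$, and then use the essential surjectivity of $\theta$ to write $a=f(a')$ for some $f\in\ker S_\theta$ and $a'\in\Aa_\theta$. Combining these with the intertwining identity gives
\[
b=\beta(f(a'))=\Phi_\beta(f)(\beta(a'))\in\im\Phi_\beta(f),
\]
and $\Phi_\beta(f)\in\ker S_\eta$ by the first step. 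Hence $\Aa_\eta=\bigcup_{g\in\ker S_\eta}\im g$, which is the definition of essential surjectivity of $\eta$.

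There is no substantial obstacle: the only point that requires a moment's care is the well-definedness and surjectivity of $\Phi_\beta$, which follows because $S_\eta$ is by definition generated by the $\eta_m$'s and the assignment $\theta_m\mapsto\eta_m$ respects all the relations among compositions (any equality $\theta_{i_1}\cdots\theta_{i_k}=\theta_{j_1}\cdots\theta_{j_s}$ in $S_\theta$ forces the corresponding equality in $S_\eta$ after post-composing with $\beta$ and using that $\beta$ is surjective). Once this is in place, the proof is a one-line chase using Lemma~\ref{lem-epi-ker}.
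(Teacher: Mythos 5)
Your proof is correct and follows essentially the same route as the paper's: both pull a letter $b$ back along $\beta$ to a letter in $\bigcup_{f\in\ker S_\theta}\im f$ and push the kernel element forward via the induced epimorphism $\Phi_\beta(f)=\beta f\beta^{-1}$, using that $\Phi_\beta$ restricts to an epimorphism $\ker S_\theta\to\ker S_\eta$ (the paper's Lemma~\ref{lem-epi-ker} and the corollary following the definition of outer encodings). Your extra care about the well-definedness and surjectivity of $\Phi_\beta$ is sound but is already supplied by the paper's preceding observation that any inner encoding is in particular an outer encoding.
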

\begin{proof}
Let $(\eta,\beta)$ be inner encoded by $\theta$, $\beta:\Aa\to\Bb$. Let $b\in \Bb$. As $U_\theta$ covers $\Aa$ there is $a\in \Aa$ and $f\in \ker S_\theta$ such that $\beta(f(a))=b$.
Hence $\beta f \beta^{-1}(\beta(a))=b$. Moreover, $\beta f \beta^{-1}\in \ker S_\eta$. Hence $b\in U_\eta$.
\end{proof}
\begin{prop}\label{prop:factor-through}
Let $\theta$ be a substitution which is essentially surjective.
Let ${}^c\theta$ be the canonical outer encoding of $\theta$.
If  $(\eta,\Phi)$ is outer encoded by $\theta$, and $\eta$ is essentially surjective and has column number $1$, then  $(\eta,\Phi)$ is outer encoded by ${}^c\theta$. \end{prop}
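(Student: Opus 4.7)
The plan is to construct an epimorphism $\Psi \colon S_{{}^c\theta} \to S_\eta$ satisfying $\Psi \circ {}^c\Phi = \Phi$. Since ${}^c\Phi$ sends the generators $\theta_m$ of $S_\theta$ to the generators ${}^c\theta_m$ of $S_{{}^c\theta}$, it is surjective; hence $\Psi$ is forced to satisfy $\Psi({}^c\Phi(f)) = \Phi(f)$, and the entire task reduces to showing this formula is well-defined. Once well-definedness is known, $\Psi$ is automatically a semigroup morphism, and on generators $\Psi({}^c\theta_m) = \Phi(\theta_m) = \eta_m$, producing the outer encoding $(\eta,\Psi)$ of ${}^c\theta$. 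So everything hinges on proving
\[
{}^c\Phi(f_1) = {}^c\Phi(f_2) \implies \Phi(f_1) = \Phi(f_2) \quad \text{for all } f_1, f_2 \in S_\theta.
\]

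Assume ${}^c\Phi(f_1) = {}^c\Phi(f_2)$. By definition \eqref{eq:def-rho} this means $[f_1 x]_\Rr = [f_2 x]_\Rr$ for every $x \in \ker S_\theta$, where $\Rr$ is Green's right relation in the completely simple semigroup $\ker S_\theta$ (Corollary \ref{cor-cp-s}). Proposition \ref{lem-R-im} then places $f_1 x$ and $f_2 x$ in the same right ideal of $\ker S_\theta$. Lemma \ref{lem-epi-ker} says that $\Phi$ restricts to an epimorphism $\ker S_\theta \to \ker S_\eta$ and that epimorphisms preserve right ideals; consequently $\Phi(f_1)\Phi(x) = \Phi(f_1 x)$ and $\Phi(f_2 x) = \Phi(f_2)\Phi(x)$ belong to a common right ideal of $\ker S_\eta$, i.e.\ they are $\Rr$-equivalent in the completely simple semigroup $\ker S_\eta$.

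At this point the hypothesis that $\eta$ has column number $1$ enters decisively: Lemma \ref{lem-cn-1} states that $\ker S_\eta$ is then a left-zero semigroup, so its $\Rr$-relation is trivial and two $\Rr$-equivalent elements coincide. Hence $\Phi(f_1)\Phi(x) = \Phi(f_2)\Phi(x)$ for every $x \in \ker S_\theta$, and since $\Phi$ maps $\ker S_\theta$ onto $\ker S_\eta$ this reads $\Phi(f_1) z = \Phi(f_2) z$ for every $z \in \ker S_\eta$. Essential surjectivity of $\eta$ provides, for each $a \in \Aa_\eta$, some $z \in \ker S_\eta$ and $b \in \Aa_\eta$ with $z(b) = a$, yielding $\Phi(f_1)(a) = \Phi(f_1)(z(b)) = \Phi(f_2)(z(b)) = \Phi(f_2)(a)$. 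Thus $\Phi(f_1) = \Phi(f_2)$, completing the argument.

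The main obstacle is converting $\Rr$-equivalence in $\ker S_\eta$ into genuine equality of functions; this is precisely the role played by the column number $1$ hypothesis via Lemma \ref{lem-cn-1}, and without it the implication can fail. Essential surjectivity of $\eta$ serves the familiar purpose of upgrading a semigroup-level equation on $\ker S_\eta$ to a pointwise equality on all of $\Aa_\eta$.
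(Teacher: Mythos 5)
Your proof is correct and follows essentially the same route as the paper's: both arguments reduce to showing that ${}^c\Phi(f_1)={}^c\Phi(f_2)$ forces $\Phi(f_1)=\Phi(f_2)$, using Lemma~\ref{lem-epi-ker} (that $\Phi$ restricts to an epimorphism of kernels and preserves the $\Rr$-relation) together with the column-number-$1$ and essential-surjectivity hypotheses on $\eta$. The only difference is cosmetic: the paper routes the final collapse through the injectivity of the canonical outer encoding ${}^c\Phi_\eta$ of $\eta$, whereas you invoke the triviality of the $\Rr$-relation on the left-zero semigroup $\ker S_\eta$ directly — two faces of the same fact.
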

\begin{proof}
Let $({}^c\eta, {}^c\Phi_\eta)$ be the canonical outer encoding of 
$\eta$. We claim that there is a morphism $\varphi:S_{{}^c\theta} \to S_{{}^c\eta}$ such that the diagram
\begin{center}\begin{tikzcd}

                             S_\theta \arrow[r, "{}^{c}\Phi"]\arrow[d,"\Phi"'] & S_{{}^{c}\theta}\arrow[d,"\varphi"] \\

                             S_\eta \arrow[r,"{}^{c}\Phi_\eta"] & S_{{}^{c}\eta}

\end{tikzcd}\end{center}
is commutative. 
By Lemma~\ref{lem-epi-ker} $\Phi(\ker S_\theta)=\ker S_\eta$ and $\Phi$ preserves the $\Rr$-relation. Therefore, given $t,s\in S_\theta$ and $x\in \ker S_\theta$,  $[tx]_{\Rr_\theta}=[sx]_{\Rr_\theta}$ implies $[\Phi(t) \Phi(x)]_{\Rr_\eta}=[\Phi(s) \Phi(x)]_{\Rr_\eta}$ (here $\Rr_\theta$ is the $\Rr$-relation on $S_\theta$ and $\Rr_\eta$ is the $\Rr$-relation on $S_\eta$). Also by Lemma~\ref{lem-epi-ker}, any $y\in \ker S_\eta$ has a pre-image under $\Phi$ in $\ker S_\theta$, so  we see that ${}^c\Phi(t)={}^c\Phi(s)$, which is $[tx]_{\Rr_\theta}=[sx]_{\Rr_\theta}$ for all $x\in \ker S_\theta$, implies ${}^c\Phi_\eta(\Phi(t))={}^c\Phi_\eta(\Phi(s))$, which is $[\Phi(t) y]_{\Rr_\eta}=[\Phi(s) y]_{\Rr_\eta}$ for all $y\in S_\eta$. Thus $\varphi$ is well defined through the formula $\varphi({}^c\Phi(t)) = {}^c\Phi_\eta(\Phi(t))$.

As $\eta$ has column number $1$ and is essentially surjective,  $\ker S_\eta$ can be identified with the alphabet of $\eta$ and therefore
${}^c\Phi_\eta:S_\eta \to \Ff(\ker S_\eta)$ is  injective. Hence $\Phi(t)=(^c\Phi_\eta)^{-1}(\varphi({}^c\Phi(t)))$ showing that it factors through ${}^c\Phi$.
\end{proof}

\begin{definition}
Let $\theta$ be an essentially surjective substitution. Its {\em maximal inner encoding with column number $1$} $\eta$ is an inner encoded substitution  of $\theta$, 
such that any other inner encoding  of  $\theta$ which has column number $1$ factors, {via an inner encoding,}  through $\eta$. \end{definition}

We now show that maximal inner encodings with column number $1$ exist.

\begin{thm}\label{thm:coincidence-existence}
Let $\theta$ be an essentially surjective substitution. Its maximal inner encoding with column number $1$ is the inner encoding $\eta_\theta$ associated to $\theta$. 
In particular, $\theta$ admits an aperiodic inner encoding with column number $1$ if and only if the inner coding associated to $\theta$ is aperiodic.
\end{thm}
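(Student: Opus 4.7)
The plan is to break the statement into two parts: (i) show that $\eta_\theta$ is an inner encoding with column number $1$ through which every other such inner encoding factors; (ii) deduce the aperiodicity equivalence.

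For part (i), first recall that $\eta_\theta$ itself has column number $1$ by Lemma~\ref{lem:coincidence-1}, so it is a candidate for maximality. Let $(\eta, \beta)$ be any inner encoding of $\theta$ with column number $1$. The key step is to show that the partition $\Pp_\theta$ refines the partition $\Pp_\beta$, which will give a well-defined code $\gamma : \Aa_{\eta_\theta} \to \Aa_\eta$ with $\beta = \gamma \circ \beta_\theta$. To establish this refinement, I would use that conjugation by $\beta$ defines an epimorphism $\Phi_\beta : S_\theta \to S_\eta$, which by Lemma~\ref{lem-epi-ker} sends $\ker S_\theta$ onto $\ker S_\eta$. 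Since $\eta$ has column number $1$, every element of $\ker S_\eta$ has rank $1$, so for every $f\in \ker S_\theta$ the map $\beta f \beta^{-1}$ is rank $1$, i.e., $\beta(\im f)$ is a singleton. Thus $\beta$ is constant on each $A\in U_\theta$; and if $A\cap B\neq\emptyset$, then $\beta(A)=\beta(B)$, since any common element forces the two singleton values to agree. Taking the transitive closure, $\beta$ is constant on each block of $\Pp_\theta$, which is exactly the factorisation $\beta = \gamma \circ \beta_\theta$. (Essential surjectivity ensures $\bigcup U_\theta = \Aa$, so $\gamma$ is defined on all of $\Aa_{\eta_\theta}$.)

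It then remains to check that $(\eta,\gamma)$ is an inner encoding of $\eta_\theta$, i.e.\ $\gamma \circ \eta_{\theta,m} = \eta_m \circ \gamma$. Combining the intertwining identities $\beta\theta_m = \eta_m\beta$ and $\beta_\theta\theta_m = \eta_{\theta,m}\beta_\theta$ with $\beta = \gamma\beta_\theta$ yields
\[
\gamma\,\eta_{\theta,m}\,\beta_\theta = \gamma\,\beta_\theta\,\theta_m = \beta\,\theta_m = \eta_m\,\beta = \eta_m\,\gamma\,\beta_\theta,
\]
and the surjectivity of $\beta_\theta$ cancels it on the right, giving the desired identity.

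For part (ii), the forward direction is immediate: if $\eta_\theta$ is aperiodic, it is itself an aperiodic inner encoding with column number $1$. Conversely, given any aperiodic inner encoding $(\eta,\beta)$ of $\theta$ with column number $1$, part (i) makes $\eta$ an inner encoding of $\eta_\theta$, so $F_\gamma : X_{\eta_\theta} \to X_\eta$ is a surjective factor map. Since $X_\eta$ is infinite, so is $X_{\eta_\theta}$; and for a substitution shift with column number $1$ (hence a coincidence and trivial height) infiniteness yields aperiodicity. The main obstacle I anticipate is the last step, checking carefully that $X_{\eta_\theta}$ cannot carry a $\sigma$-periodic point merely because $\eta_\theta$ need not be primitive; however, a coincidence combined with infiniteness of the shift is enough to rule this out, since any periodic point would be forced to coincide with the equicontinuous factor, contradicting infiniteness.
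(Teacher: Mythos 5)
Your argument is correct and, at its core, rests on the same mechanism as the paper's proof: an inner encoding $(\eta,\beta)$ with column number $1$ induces an epimorphism on kernels, so every $f\in\ker S_\theta$ has $\beta(\im f)$ a singleton, whence each member of $U_\theta$, and then (by transitive closure) each block of $\Pp_\theta$, lies inside a block of $\Pp_\beta$. The difference is one of packaging: the paper routes this through Proposition~\ref{prop:factor-through}, viewing $\eta$ as an outer encoding of the canonical outer encoding ${}^c\theta$ and using that an epimorphism between left zero semigroups is just a surjection of alphabets, whereas you work directly with $\Phi_\beta(f)=\beta f\beta^{-1}$ together with Lemmas~\ref{lem-epi-ker} and~\ref{lem:column-number-semigroup}. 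Your route is more self-contained and also makes explicit the verification that $\gamma\circ{\eta_\theta}_m=\eta_m\circ\gamma$ by cancelling the surjection $\beta_\theta$, a step the paper leaves implicit; the paper's route buys the reuse of Proposition~\ref{prop:factor-through}, which it needs elsewhere. You also spell out part (ii), which the paper's written proof does not address at all.

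One point at the very end deserves tightening. Your justification that an infinite $X_{\eta_\theta}$ cannot carry a periodic point (``any periodic point would be forced to coincide with the equicontinuous factor'') is not really an argument as stated. The clean fix is to observe that $\eta_\theta$ is primitive: every block $P'\in\Pp_\theta$ contains some $A=\im f$ with $f\in\ker S_\theta$, and writing $f={\theta^k}_m$ one gets ${\theta^k}_m(P)\subseteq A\subseteq P'$ for every block $P$, so $P'$ occurs in $\eta_\theta^k(P)$ --- the same computation as in the proof of Lemma~\ref{lem:canonical-quasi}. For a primitive substitution, infiniteness of the shift space is equivalent to the absence of $\sigma$-periodic points, which is exactly the equivalence the paper invokes in its definition of aperiodicity; alternatively, if one reads ``aperiodic'' as ``infinite shift space'' throughout, your surjectivity argument already closes the loop with no further work.
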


\begin{proof}
By Lemma \ref{lem:coincidence-1}
$\eta_\theta$ is an inner encoding with column number $1$. 
Suppose that $\eta$ is an inner encoding of $\theta$ with column number $1$. 
We saw in Proposition \ref{prop:factor-through}  that it must be an outer encoding of the canonical outer encoding  ${}^c\theta$. Furthermore the epimorphism $\Phi:S_{{}^c\theta}\to S_\eta$ restricts to an epimorphism from $\ker S_{{}^c\theta}$ to $\ker S_{\eta}$. Since both have column number $1$, this restriction of $\Phi$ is an epimorphism between left zero semigroups, hence a surjective map from the alphabet of 
${}^c\theta$, which is $U_\theta$, to the alphabet of $\eta$. On the other hand, the code from $\Aa$ to the alphabet of $\eta$ is given by a partition $\Pp$. Any member of $U_\theta$ must be a subset  of an element of $\Pp$.
  Thus any element of $\Pp_\theta$ is a subset of an element of $\Pp$, and 
$\eta$ is a inner encoding of the inner encoding $\eta_\theta$ defined by $\Pp_\theta$.  
\end{proof}

\label{ex:partition} 

\subsection{Trivial height}
Recall that a substitution has a coincidence if and only if its height coincides with its na\"{i}ve column number. In this subsection we consider the case in which $\theta$ has height $1$. This allows us to exploit the above results about inner encoded substitutions with column number $1$ for the analysis of factors which are almost automorphic.

\begin{thm}\label{thm:characterisation}
Let $\theta$ be a primitive substitution of trivial height. Then $X_\theta$ has an aperiodic almost automorphic shift factor over its maxmal equicontinuous factor if and only if one of the inner encodings associated to  $\theta^{(n)}$, $n=(-l,r)$, $0\leq l,r\leq 1$, 
is aperiodic.
\end{thm}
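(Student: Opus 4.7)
The plan is to combine the reduction of arbitrary factors of $X_\theta$ to inner encodings of some small-radius collaring $\theta^{(n)}$, developed in Section~\ref{Section-basic}, with the structural result of Theorem~\ref{thm:coincidence-existence} which says that the inner encoding associated to a substitution is maximal among inner encodings with column number~$1$.

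For the forward direction, suppose $\pi : X_\theta \to Y$ is an aperiodic almost automorphic shift factor. By Corollary~\ref{cor:factor-through} I may assume that the maximal equicontinuous factor map $\pi_\theta$ factors through $\pi$, so in particular $\pi$ preserves the fixed-point fibre. Applying Corollary~\ref{cor:factor-bij} produces a collaring $\theta^{(n)}$ with $n = (-l,r)$, $0 \leq l,r \leq 1$, a power $p \geq 1$, and an inner encoding $(\eta,\beta)$ of $(\theta^{(n)})^p$ whose shift is conjugate to $Y$. Since $Y$ is an aperiodic almost automorphic shift over $\Z_\ell$, the substitution $\eta$ is primitive, aperiodic, of trivial height, and has a coincidence, so its column number equals~$1$. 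Theorem~\ref{thm:coincidence-existence} applied to $(\theta^{(n)})^p$ then shows that $\eta$ is itself an inner encoding of the substitution $\eta_{(\theta^{(n)})^p}$ associated to $(\theta^{(n)})^p$, which therefore must be aperiodic. Lemma~\ref{lem-encoding-power} identifies $\eta_{(\theta^{(n)})^p}$ with $(\eta_{\theta^{(n)}})^p$, and since $\eta_{\theta^{(n)}}$ is primitive, aperiodicity of this power is equivalent to aperiodicity of $\eta_{\theta^{(n)}}$.

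For the reverse direction, suppose that $\eta_{\theta^{(n)}}$ is aperiodic for some admissible $n$. By Lemma~\ref{lem:coincidence-1} it has column number~$1$, hence also trivial height (since height divides column number) and a coincidence, so $X_{\eta_{\theta^{(n)}}}$ is an aperiodic almost automorphic shift over $\Z_\ell$. The code $\beta_{\theta^{(n)}}$ furnishes a factor map $F_{\beta_{\theta^{(n)}}} : X_{\theta^{(n)}} \to X_{\eta_{\theta^{(n)}}}$; composing with $F_\imath^{-1}$ from Lemma~\ref{lem:right-kappa-value} yields a factor $X_\theta \to X_{\eta_{\theta^{(n)}}}$. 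By Lemma~\ref{lem:code-factor-kappa} this factor preserves fixed points, and since $X_\theta$ and $X_{\eta_{\theta^{(n)}}}$ share the same maximal equicontinuous factor $\Z_\ell$, the factor is indeed almost automorphic over the maximal equicontinuous factor of $X_\theta$.

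The main obstacle is the forward direction, and within it the a priori control on the radii: Proposition~\ref{prop:radius-1} (hence Corollary~\ref{cor:factor-bij}) is what enables the restriction $l,r \leq 1$, so that only finitely many collared substitutions need to be inspected. The auxiliary power $p$ appearing in Corollary~\ref{cor:factor-bij} is harmless because of Lemma~\ref{lem-encoding-power} and primitivity; the algebraic input of Theorem~\ref{thm:coincidence-existence}, which routes any column-number-$1$ inner encoding through $\eta_{\theta^{(n)}}$, is what converts the existence question into the explicit, verifiable test on the four collared substitutions.
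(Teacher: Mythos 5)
Your proof is correct and follows essentially the same route as the paper's: Corollary~\ref{cor:factor-bij} to reduce an arbitrary factor to an inner encoding of $(\theta^{(n)})^p$ with $l,r\leq 1$, Theorem~\ref{thm:coincidence-existence} to route any column-number-$1$ inner encoding through the associated one, Lemma~\ref{lem-encoding-power} to absorb the power $p$, and the trivial-height/column-number-$1$ observation for the converse. The only differences are cosmetic (you spell out both directions and additionally invoke Corollary~\ref{cor:factor-through}, which is harmless but not needed since Corollary~\ref{cor:factor-bij} already applies to arbitrary factors).
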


{Note that Theorem  \ref{thm:characterisation} and Lemma \ref{lem:code-factor-kappa} imply that if $X_\theta$ has an aperiodic almost automorphic factor, then its maximal equicontinuous factor map factors through this almost automorphic factor.}

We remark that it is sufficient to check whether the inner encoding associated  $\theta^{(-1,1)}$  is aperiodic, as all other $\theta^{(n)}$ are inner encodings of   $\theta^{(-1,1)}$. However if one suspects that $X_\theta$ has an almost automorphic factor, in practice it is easier to check whether  $X_\theta$ or $X_{\theta^{(0,1)}}$ give almost automorphic factors, especially if doing these computations by hand.

\begin{proof} 
By Cor.~\ref{cor:factor-bij} any factor $X_\theta\to Y$ is conjugate to an inner encoding of ${\theta^{(n)}}^p$ for some $|n|\leq 1$ and $p\geq 1$.  By Theorem 
\ref{thm:coincidence-existence}
the maximal inner encoding
of $(\theta^{(n)})^{p}$ with column number $1$ is the inner encoding defined by $\Pp_{(\theta^{(n)})^{p}}$. By Lemma~\ref{lem-encoding-power} the inner encoding of $(\theta^{(n)})^{p}$ defined by $\Pp_{(\theta^{(n)})^{p}}$ is the $p$-th power of the  inner encoding of $\theta^{(n)}$ defined by $\Pp_{\theta^{(n)}}$. 
We can therefore conclude that $X_\theta$ has an aperiodic almost automorphic shift factor over its maximal equicontinuous factor if and only if one of the inner encodings defined by  $\mathcal P_{\theta^{(n)}}$, $n=(-l,r)$, $l,r\leq 1$, has maximal equicontionuous factor $(\Z_\ell,(+1))$. Indeed, as the inner encoding has column number $1$ its shift is almost automorphic, but if it is periodic, its maximal equicontinuous factor is finite, whereas if it is aperiodic its maximal equicontinuous factor must be $(\Z_\ell,(+1))$, which coincides with the 
maximal equicontinuous factor of $\theta^{(n)}$, as $\theta^{(n)}$ has trivial height.
\end{proof}

\begin{example}
     We return to Example \ref{ex:non-partition} which has height $1$. 
         We saw that $\theta$ has no nontrivial inner encoding with a coincidence. By the comment after Theorem \ref{thm:characterisation}, it is enough to check whether $X_{\theta^{(-1,1)}}$ has an almost automorphic factor. We find

  \begin{align*}
\theta^{(-1,1)}\colon 0 & \mapsto W7I68 &
1 & \mapsto YSf1Q &
2 & \mapsto LFU3C \\
3 & \mapsto LFU3E &
4 & \mapsto LFV4b &
5 & \mapsto e051R \\
6 & \mapsto h7I69 &
7 & \mapsto oSf1P &
8 & \mapsto h7KAj \\
9 & \mapsto h7KAl &
A & \mapsto h7KAn &
B & \mapsto H2B3D \\
C & \mapsto MSf1O &
D & \mapsto MSf1Q &
E & \mapsto MSf1R \\
F & \mapsto LFV4a &
G & \mapsto T051P &
H & \mapsto U2B3D \\
I & \mapsto W7I69 &
J & \mapsto W7I6A &
K & \mapsto W7I69 \\
L & \mapsto XFU3D &
M & \mapsto YSf1P &
N & \mapsto XFV4Z \\
O & \mapsto XFV4a &
P & \mapsto XFV4b &
Q & \mapsto XFV4c \\
R & \mapsto XFV4d &
S & \mapsto W7KAj &
T & \mapsto e051P \\
U & \mapsto g2B3D &
V & \mapsto g2B3D &
W & \mapsto h7I6A \\
X & \mapsto iFU3D &
Y & \mapsto oSf1P &
Z & \mapsto h7KAj \\
a & \mapsto h7KAk &
b & \mapsto h7KAl &
c & \mapsto h7KAm \\
d & \mapsto h7KAn &
e & \mapsto G051P &
f & \mapsto G051R \\
g & \mapsto H2B3D &
h & \mapsto J7I6A &
i & \mapsto LFU3D \\
j & \mapsto MSf1N &
k & \mapsto MSf1O &
l & \mapsto MSf1P \\
m & \mapsto MSf1Q &
n & \mapsto MSf1R &
o & \mapsto MSf1P.
\end{align*}
Furthermore, $\mathcal P_{\theta^{(-1,1)}    }=\{\alpha,\beta,\gamma\}$ where 
 \begin{align*}& \alpha=\{1, 3, 4, 6, 8, 9, A, C, D, E, N, O, P, Q, R, Z, a, b, c, d, j, k, l, m, n\},\\
&\beta= \{5, B, G, H, I, J, K, L, M, T, U, V, W, X, Y, e, f, g, h, i, o\},\mbox{ and }\\
&\gamma=\{0, 2, 7, F, S\}.
\end{align*}
The maximal inner encoding with a coincidence is the substitution 
 \begin{align*}
 \eta_{\theta^{(-1,1)}}: \alpha & \mapsto \beta \gamma\beta\alpha \alpha \\
  \beta & \mapsto \beta \gamma\beta\alpha \alpha \\
   \gamma & \mapsto \beta \gamma\beta\alpha \alpha \end{align*}
which is periodic. We conclude that $X_\theta$ does not have an almost automorphic factor. 
 \end{example}
 \subsection{Non-trivial height} Suppose that our substitution $\theta$ has non-trivial height. While we can still apply  Theorem~\ref{thm:coincidence-existence} to find the maximal encoded substitution with column number $1$, and this one will give an almost automorphic factor of $X_\theta$, the resulting substitution $\eta_\theta$ must have height $1$ and thus its  MEF is strictly smaller than that of $\theta$, as Example \ref{ex:wrong-aa-factor} shows. 
 This is not what we are after. 
 
 \begin{example}\label{ex:wrong-aa-factor} To highlight the difference between working with or without the pure base, consider 
\begin{align*}
\theta:   a  & \mapsto  aba   \\
     b &  \mapsto  bac \\
     c & \mapsto cab
\end{align*}     
which has height $2$. Its semigroup is  
$$S_\theta = \{\id,\tau_{bc}\}\sqcup \{p_b,p_c,\bar p_b,\bar p_c\}$$
where $\tau_{bc}$ exchanges $b$ with $c$ and leaves $a$ fixed, $p_b$ is the idempotent of rank $2$ mapping $c$ to $b$, $p_c$ is the idempotent of rank $2$ mapping $b$ to $c$, $\bar p_b$ is $p_b$ followed by an exchange of $a$ and $b$ and $\bar p_c$ is $p_c$ followed by an exchange of $a$ and $c$. The kernel $\ker S_\theta$ is thus given by  rank $2$ maps $\ker S_\theta = LZ_2\times\Z/2\Z$, where  $LZ_2$ is the left zero semigroup on two letters. We see that $\theta$'s height equals its na\"{i}ve column number and hence the substitution shift  is a somewhere injective extension of its maximal equicontinuous factor. So in principle we don't need to look at an almost automorphic factor, as the substitution shift already is almost automorphic. But it is instructive to apply the construction of the canonical outer encoding, as the result is a primitive almost automorphic substitution with trivial height. Indeed, if we denote ${}^c b = [p_b]$, the $\Rr$-class of $p_b$ and ${}^c c = [p_c]$ that of $p_c$ we find that the canonical outer encoded substitution is given by 
 \begin{align*}
{}^c\theta:   
     {}^c b &  \mapsto  {}^c b {}^c b {}^c c \\
       {}^c c &  \mapsto  {}^c c {}^c b {}^c b \end{align*}     
Furthermore, the cover $U_\theta$ is given by $\{\im p_b,\im p_c\} = \{\{a,b\},\{a,c\}\}$ which is not a partition, hence the canonical outer encoded substitution is not inner encoded. The coincidence partition is trivial $\Pp_\theta = \Aa$.

\end{example}

Thus instead we will work with the pure base of $\theta$.

We recall from  \cite{dekking} the following results:
If $\theta$ is primitive and has height $h$, then  there exists a $\sigma^h$-periodic clopen partition of $X_\theta$, $X_\theta= \bigsqcup_{k\in\Z/h\Z}  X_\theta^k$, and $\sigma(X_\theta^k) = X_\theta^{k+1}$.
Moreover, $X_\theta^0$ is invariant under $\theta$ and there exists a subset $\Aa'\subset\Aa^h$ such that $X_\theta^0$ consists precisely of the sequences $x\in X_\theta$ for which $x_0\dots x_{h-1}\in\Aa'$. Define a substitution $\theta'$ on $\Aa'$ as follows: 
Given $a_0\dots a_{h-1}\in\Aa'$ compute $\theta(a_0\dots a_{h-1})=a'_0\dots a'_{h\ell-1}$ and set
$$\theta'_k(a_0\dots a_{h-1}) = a'_{kh}\dots a'_{(k+1)h-1}.$$
$\theta'$ is called the pure base of $\theta$. It corresponds to the restriction of $\theta$ to $X_\theta^0$ but expressed in the alphabet $\Aa'$.

The suspension of a $\Z$-action $(X,\varphi)$ with $\Z/h\Z$ is the space $X\times \Z/h\Z$ equipped with the $\Z$-action 
\begin{equation}\label{eq:height-rep}
 T_{\varphi}(x,i):=
\begin{cases}
  (x,i+1)      & \text{ if  } 
0\leq  i<h-1    \\
    (\varphi(x), 0)   & \text{ if } i=h-1
\end{cases}
\end{equation} 

The shift $(X_\theta,\sigma)$ is, for a substitution of height $h$, topologically conjugate to a  suspension of the substitution shift $(X_{\theta'}, \sigma)$ with the finite group $\Z/h\Z$. 
Here we have denoted the shift action on $X_{\theta'}$ by $\sigma'$ in order to distinguish it from the shift action $\sigma$ on $X_\theta$. The conjugacy is given by
$$X_{\theta'}\times \Z/h\Z\ni (x',i) \mapsto \sigma^i(x)\in X_{\theta},$$
where on the left hand side $x'$ is  a sequence of letters from $\Aa'$, that is, a sequence of allowed (for $\theta$) words of length $h$ whereas on the right hand side 
$x$ is the sequence of letters from $\Aa$ that one obtains when one interprets $x'$ as a sequence of letters in $\Aa$. Note that 
$(x',i)\mapsto (\sigma'(x'),i)$ on the left corresponds to $\sigma^i(x)\mapsto \sigma^{h+i}(x)$ on the right.
The suspension construction is functorial and immediately implies:
\begin{itemize}
\item If $F:(X_{\theta'},\sigma')\to (Y,\varphi)$ is a factor map then  $F\times\id:(X_{\theta'}\times\Z/h\Z,T_{\sigma'})\to (Y\times\Z/h\Z,T_{\varphi})$ is a factor map and any factor map of 
$(X_{\theta'}\times\Z/h\Z,T_{\sigma'})$, up to a rotation, arises in this way. 
In particular the MEF of $(X_\theta,\sigma)$ is conjugate to $(\Z_\ell\times \Z/h\Z, T_{+1})$.
\item $(Y,\varphi)$ is almost automorphic if and only if $(Y\times\Z/h\Z,T_{\varphi})$ is almost automorphic.
\end{itemize}

 Recall that a topological dynamical system $(X, T )$ is a minimal almost automorphic extension of an odometer if and only if it is topologically conjugate to a shift \cite[Theorem 6.4]{Downarowicz}.  Combining this with the remarks above, we obtain the following.

     \begin{prop}\label{lem:almost-automorphic-height.}
     Let $\theta$ be a  length-$\ell$, primitive aperiodic substitution, with pure base $\theta'$. Then $(X_\theta, \sigma)$ has an aperiodic almost automorphic shift factor above its maximal equicontinuous factor if and only if $(X_{\theta'}, \sigma')$ has an aperiodic almost automorphic shift factor above its maximal equicontinuous factor.
     \end{prop}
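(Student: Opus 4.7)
My plan is to exploit the identification, recalled just before the statement, of $(X_\theta,\sigma)$ with the suspension $(X_{\theta'}\times\Z/h\Z,T_{\sigma'})$, whose maximal equicontinuous factor is $(\Z_\ell\times\Z/h\Z,T_{+1})$, together with the two functorial bullets stated above: factor maps of a suspension are, up to a rotation, suspensions of factor maps of the base, and $(Y,\varphi)$ is almost automorphic if and only if $(Y\times\Z/h\Z,T_\varphi)$ is. The two directions of the proposition then become bookkeeping with these two facts, plus one appeal to \cite[Theorem 6.4]{Downarowicz} to promote an almost automorphic extension of an odometer to a shift.

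For the $(\Leftarrow)$ direction, assume $F:(X_{\theta'},\sigma')\to (Y,\sigma')$ is an aperiodic almost automorphic shift factor above $(\Z_\ell,+1)$. Then $F\times\id$ gives a factor map from $X_{\theta'}\times\Z/h\Z$ onto the suspension $(Y\times\Z/h\Z,T_{\sigma'})$. By the second bullet the target is almost automorphic over $(\Z_\ell\times\Z/h\Z,T_{+1})$, which is precisely the maximal equicontinuous factor of $(X_\theta,\sigma)$; it is aperiodic because $(Y,\sigma')$ is. By \cite[Theorem 6.4]{Downarowicz}, this minimal aperiodic almost automorphic extension of an odometer is conjugate to a shift, and transporting back across the suspension conjugacy yields the required shift factor of $(X_\theta,\sigma)$.

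For the $(\Rightarrow)$ direction, suppose $\pi:X_\theta\to Z$ is an aperiodic almost automorphic shift factor above the maximal equicontinuous factor of $X_\theta$. Transporting $\pi$ across the suspension conjugacy produces a factor map from $(X_{\theta'}\times\Z/h\Z,T_{\sigma'})$ onto $Z$, which by the first bullet has, up to a harmless rotation, the form $F\times\id:X_{\theta'}\times\Z/h\Z\to Y\times\Z/h\Z$ for some factor $F:X_{\theta'}\to Y$. By the second bullet $(Y,\sigma')$ is almost automorphic, and it must be aperiodic because $(Y\times\Z/h\Z,T_{\sigma'})\cong Z$ is. A second invocation of \cite[Theorem 6.4]{Downarowicz} identifies $(Y,\sigma')$ with a shift. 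Finally, the maximal equicontinuous factor of $Y$ is a factor of $\Z_\ell$, and since $Y$ is aperiodic and almost injective over this factor it cannot be finite, so it is all of $\Z_\ell$, placing $Y$ above the maximal equicontinuous factor of $X_{\theta'}$. The main delicate point is the "up to rotation" step, which requires verifying that the transported factor map respects the $\Z/h\Z$-decomposition; this is exactly what the first bullet encodes, so the argument reduces to invoking it correctly.
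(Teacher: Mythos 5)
Your argument follows exactly the route the paper intends: the paper offers no more proof than ``combining this with the remarks above,'' i.e.\ the two functorial bullets about suspensions plus the Downarowicz theorem, and your two directions assemble these ingredients correctly. The one step that does not hold as written is the final inference in the $(\Rightarrow)$ direction: from ``the maximal equicontinuous factor of $Y$ is an infinite factor of $\Z_\ell$'' you conclude it equals $\Z_\ell$. This is false for composite $\ell$ that is not a prime power: for $\ell=6$ one has $\Z_6\cong\Z_2\times\Z_3$, and $\Z_2$ is an infinite proper equicontinuous factor. So aperiodicity of $Y$ alone does not place $Y$ above the maximal equicontinuous factor of $X_{\theta'}$.

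The repair uses the hypothesis you state but never invoke at that point, namely that $Z$ lies \emph{above} the maximal equicontinuous factor $(\Z_\ell\times\Z/h\Z,T_{+1})$ of $(X_\theta,\sigma)$. This forces the maximal equicontinuous factor of $Z\cong Y\times\Z/h\Z$ to be all of $\Z_\ell\times\Z/h\Z$; since by functoriality of the suspension the maximal equicontinuous factor of $(Y\times\Z/h\Z,T_{\sigma'})$ is the suspension of that of $(Y,\sigma')$, one gets that the maximal equicontinuous factor of $Y$ is $\Z_\ell$, as required. With that substitution your proof is complete and coincides with the paper's.
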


   We describe how to construct the desired almost automorphic factor  of $X_{\theta}$  when $X_{\theta'}$ has an aperiodic almost automorphic shift factor $X_{\eta'}$ via the map $F':X_{{\theta'}}\rightarrow  X_{\eta'}$.
       $\eta'$ necessarily has height $1$.  Define a new length-$\ell$ substitution $\eta$ with alphabet $\mathcal{A}_\eta=\{a_j \::\: a\in\mathcal{A}_{\eta'},1\le j \le h\}$ as follows. Define $\mathfrak i: \mathcal A_{\eta'} \rightarrow \mathcal A_\eta^{h} $ by $\mathfrak i(a)= a_1 \dots a_h$. Now let $\eta$ be the unique length-$\ell$ substitution which satisfies $\eta\circ \mathfrak i = \mathfrak i \circ \eta'$.
That is, we ``split'' each $a\in\mathcal{A}_{\eta'}$ into $h$ different letters $a_1,\dotsc,a_h$ in such a way that the concatenation of the length $\ell$ words $\eta(a_1)\cdots\eta(a_h)$ is the word obtained from ${\eta'}(a)$ by splitting every letter. As defined, $\eta$ has height $h$ and pure base ${\eta'}$. Thus,  $X_\eta$ is also a suspension of $X_{\eta'}$ over $\mathbb{Z}/h\mathbb{Z}$.  It can be seen that $X_\eta $ is almost automorphic over $(\Z_{\bar\ell,h}, +1)$. See Example \ref{ex:height-bijective} for such a construction.

     \begin{cor}\label{cor:main-thm-2}
     Let $\theta$ be a  length-$\ell$, primitive aperiodic substitution, with pure base $\tilde\theta$. Then $X_\theta$ has an aperiodic almost automorphic shift factor if and only if the inner encoding defined by  $\mathcal P_{\tilde{\theta}^{(-1,1)}}$
is aperiodic.

     \end{cor}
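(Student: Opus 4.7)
The plan is to combine Proposition~\ref{lem:almost-automorphic-height.} with Theorem~\ref{thm:characterisation} in a straightforward way. First I would invoke Proposition~\ref{lem:almost-automorphic-height.} to reduce the question to the pure base: $(X_\theta,\sigma)$ has an aperiodic almost automorphic shift factor above its MEF if and only if $(X_{\tilde\theta},\sigma)$ does. This is the content of that proposition, once one observes that an almost automorphic shift factor of $(X_\theta,\sigma)$ is automatically above the maximal equicontinuous factor; this is the dynamical content of Corollary~\ref{cor:factor-through} combined with the fact that $(\Z_{\bar\ell,h},+1)$ is the MEF.

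Now $\tilde\theta$ is a primitive aperiodic substitution of trivial height, so Theorem~\ref{thm:characterisation} applies directly: $(X_{\tilde\theta},\sigma)$ has an aperiodic almost automorphic shift factor above its MEF if and only if the inner encoding defined by $\mathcal P_{\tilde\theta^{(n)}}$ is aperiodic for some $n=(-l,r)$ with $0\leq l,r\leq 1$. The last step is to observe, as remarked immediately after Theorem~\ref{thm:characterisation}, that among the four possible choices of $n$, the collaring $n=(-1,1)$ is maximal in the sense that every other $\tilde\theta^{(n)}$ with $|l|,|r|\leq 1$ is an inner encoding of $\tilde\theta^{(-1,1)}$. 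Concretely, the code $\imath$ from Lemma~\ref{lem:right-kappa-value} (forgetting some of the collaring letters) defines an inner encoding $\tilde\theta^{(n)}$ of $\tilde\theta^{(-1,1)}$, and hence the inner encoding associated to $\tilde\theta^{(n)}$ is in turn an inner encoding of the one associated to $\tilde\theta^{(-1,1)}$, by Theorem~\ref{thm:coincidence-existence} (maximality of $\eta_{\tilde\theta^{(-1,1)}}$ among inner encodings of column number $1$).

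Consequently, if any of the four $\eta_{\tilde\theta^{(n)}}$ is aperiodic, it is a factor of $\eta_{\tilde\theta^{(-1,1)}}$, so the latter must also be aperiodic; the converse is trivial since $\tilde\theta^{(-1,1)}$ is itself one of the allowed collarings. This gives the claimed equivalence. I do not foresee any real obstacle beyond being careful that the reduction to the pure base preserves the relevant ``aperiodic almost automorphic over the MEF'' property in both directions, which is handled by the functoriality of the suspension construction used in Proposition~\ref{lem:almost-automorphic-height.}.
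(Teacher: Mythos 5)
Your proof is correct and is essentially the paper's intended argument: the corollary is stated there without a separate proof, as the immediate combination of Proposition~\ref{lem:almost-automorphic-height.}, Theorem~\ref{thm:characterisation}, and the remark following that theorem that every $\tilde\theta^{(n)}$ with $0\le l,r\le 1$ is an inner encoding of $\tilde\theta^{(-1,1)}$. Your explicit justification of that last reduction via the projection code and the maximality statement of Theorem~\ref{thm:coincidence-existence} is precisely what the paper's remark relies on.
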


\subsection{Two-letter substitutions of bijective substitutions and their canonical outer encodings}\label{sec-3.4}
In this section we apply our results above to study when a bijective substitution shift has an almost automorphic factor. In the process we revisit the work in \cite[Section 4]{Kellendonk-Yassawi-ESBS}. We start with bijective substitutions of trivial height.
Remember that for bijective substitutions, $S_\theta$ is a group and so ${}^c\theta$ is trivial, being defined on a one letter alphabet. This does not mean that a bijective substitution shift does not admit an almost automorphic substitutional factor, but only that these may only be seen when working with the collared versions of the substitution. We will work with $\theta^{(0,1)}$ and, 
in case that $X_{\theta^{(0,1)}}$ does not have an almost automorphic factor, verify whether or not $\theta^{(-1,1)}$ does. 

For $\theta$ a bijective substitution on $\Aa$ let $\Aa^{(2)}$ be the set of allowed two-letter words for $\theta$. Recall that the 2-collared substitution  
$\theta^{(0,1)}$ associated to $\theta$ is the substitution on $\Aa^{(2)}$ of the same length given by
$$\theta^{(0,1)}_m(a,b) = (\theta_m(a),\theta_{m+1}(a)), \quad 0\leq m<\ell-1, \quad 
\theta^{(0,1)}_{\ell-1}(a,b) = (\theta_{\ell-1}(a), \theta_{0}(b))$$
We assume that all $\theta$-periodic points are fixed,  so that  $\theta_0=\theta_{\ell-1}=\id$; thus all maps $\theta^{(0,1)}_m$ with $m<\ell-1$ have rank $c=|\Aa|$ while $\theta^{(0,1)}_{\ell-1}$ is equal to the identity on $\Aa^{(2)}$ and hence has rank equal to $|\Aa^{(2)}|$.
We know that the na\"{i}ve column number of a substitution is a conjugacy invariant and so  the na\"{i}ve column number of $\theta^{(0,1)}$ must also be $c$. Hence we see that the kernel of $S_{\theta^{(0,1)}}$ is generated by $\theta_m \mathrm{pr_1}\times\theta_{m+1} \mathrm{pr_1}$, $m<\ell-1$, where $\mathrm{pr_1}:\Aa^{(2)}\to \Aa$ is the projection onto the first factor. Recall that the right ideals of $\ker S_{\theta^{(0,1)}}$ are in one-to-one correspondence to the images of these maps. As $\theta_m$ is bijective, the image of  $\theta_m \mathrm{pr_1}\times \theta_{m+1} \mathrm{pr_1}$ coincides with that of $\mathrm{pr_1}\times \theta_{m+1}\theta_m^{-1} \mathrm{pr_1}$ and so uniquely is determined by the map $\theta_{m+1}\theta_{m}^{-1}$. We thus see that the set of right ideals is in one-to-one correspondence the set 
$$I_\theta:=\{\theta_{m+1}\theta_{m}^{-1}|m=0,\cdots,\ell-2\}$$ 
which plays a prominent role in the description of the Ellis semigroup of the substitution shift $(X_\theta,\sigma)$ 
and is also called the $R$-set of the substitution \cite{Kellendonk-Yassawi-ESBS}. 
We have shown the first part of 
\begin{cor}\label{cor:bijective}
The alphabet of  the canonical outer encoding  {${}^c\theta^{(0,1)}$} of $ \theta^{(0,1)}$ 
 can be identified with $I_\theta$. 
Under this identification it is given by 
$${{}^c\theta^{(0,1)}}_m(\theta_i\theta_{i-1}^{-1}) = \theta_{m+1}\theta_{m}^{-1},\quad 
0\leq m< \ell-1,\quad {{}^c\theta^{(0,1)}}_{\ell-1}(\theta_i\theta_{i-1}^{-1})=\theta_i\theta_{i-1}^{-1}$$
It is an inner encoding of $\theta^{(0,1)}$ if and only if
for all $f,g\in I_\theta$ and $\forall a\in\Aa$:  
$f(a)\neq g(a)$.
Thus  the following is a necessary condition for  ${}^c\theta^{(0,1)}$ to be an inner encoding:
\begin{equation}\label{eq-nec-code}
|I_\theta| \times |\Aa| = |\Aa^{(2)}|
\end{equation}
and so the code of the inner encoding is a $|\Aa|$-to-1 map.
\end{cor}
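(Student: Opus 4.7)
The plan is to verify the explicit column-map formula by direct calculation from the definition \eqref{eq:def-rho}, and then to reduce the inner-encoding question to a partition check via Proposition \ref{prop:canonical-not-factor}. The cardinality claim then follows from counting.

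First I would compute the column maps. The preceding discussion identifies the alphabet $\ker(S_{\theta^{(0,1)}})/\mathcal{R}$ with $I_\theta$ via $[\theta^{(0,1)}_i]_{\mathcal{R}} \leftrightarrow \theta_{i+1}\theta_i^{-1}$ for $0 \le i \le \ell - 2$. For $m < \ell - 1$, the image of $\theta^{(0,1)}_m \circ \theta^{(0,1)}_i$ is $\{(\theta_m\theta_i(a), \theta_{m+1}\theta_i(a)) : a \in \mathcal{A}\}$, which, by bijectivity of $\theta_m\theta_i$ (substituting $b = \theta_m\theta_i(a)$), equals $\{(b, \theta_{m+1}\theta_m^{-1}(b)) : b \in \mathcal{A}\}$. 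This image depends only on $\theta_{m+1}\theta_m^{-1}$ and not on $i$, which under the identification yields the first formula. For $m = \ell - 1$, the convention $\theta_0 = \theta_{\ell-1} = \mathrm{id}$ makes $\theta^{(0,1)}_{\ell-1}$ act as the identity on $\mathcal{A}^{(2)}$, so ${}^c\theta^{(0,1)}_{\ell-1}$ acts as the identity on $I_\theta$.

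For the inner-encoding characterisation, I would appeal to Proposition \ref{prop:canonical-not-factor}: ${}^c\theta^{(0,1)}$ is an inner encoding of $\theta^{(0,1)}$ if and only if $U_{\theta^{(0,1)}}$ is a partition of $\mathcal{A}^{(2)}$. The computation above shows that each minimal set is the graph $\{(a, f(a)) : a \in \mathcal{A}\}$ of some $f \in I_\theta$. By primitivity and Lemma \ref{lem-cover}, these graphs cover $\mathcal{A}^{(2)}$, so being a partition is equivalent to being pairwise disjoint, and the graphs of distinct $f, g \in I_\theta$ are disjoint precisely when $f(a) \neq g(a)$ for every $a \in \mathcal{A}$ (the corollary's condition, which is vacuous when $f = g$ and is to be read for $f \neq g$).

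Finally, for the cardinality claim, if the $|I_\theta|$ graphs partition $\mathcal{A}^{(2)}$, then since each graph has cardinality $|\mathcal{A}|$ (the map $a \mapsto (a, f(a))$ being injective), we obtain $|I_\theta| \cdot |\mathcal{A}| = |\mathcal{A}^{(2)}|$; the code $\beta : \mathcal{A}^{(2)} \to I_\theta$ sends each element of $\mathrm{graph}(f)$ to $f$ and is therefore $|\mathcal{A}|$-to-1. I do not foresee any serious obstacle: the whole argument is essentially a translation between the algebraic description of $\ker S_{\theta^{(0,1)}}$ via $\mathcal{R}$-classes and the explicit geometric picture of the minimal sets as graphs of the maps in $I_\theta$.
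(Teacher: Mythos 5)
Your proposal is correct and follows essentially the same route as the paper: the identification of the $\mathcal{R}$-classes of $\ker S_{\theta^{(0,1)}}$ with the graphs of the maps in $I_\theta$ (done in the paper in the paragraph preceding the corollary), the appeal to Proposition \ref{prop:canonical-not-factor} reducing the inner-encoding question to whether these graphs coincide or are disjoint, and the count $|I_\theta|\cdot|\Aa|=|\Aa^{(2)}|$. Your reading of the condition $f(a)\neq g(a)$ as applying to distinct $f,g\in I_\theta$ is the intended one.
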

\begin{proof}
Recall that the canonical outer encoding is an inner encoding if the images of the maps $\theta_m \mathrm{pr_1}\times\theta_{m+1} \mathrm{pr_1}$, $m<\ell-1$, either coincide or do not overlap. As the images of $\theta_m \mathrm{pr_1}\times\theta_{m+1} \mathrm{pr_1}$ and $\theta_{m'} \mathrm{pr_1}\times\theta_{m'+1} \mathrm{pr_1}$ coincide if and only if 
$\theta_m\theta_{m-1}^{-1} = \theta_{m'}\theta_{m'-1}^{-1}$ this is exactly the condition stated. If this is the case, then the number of letters of $\theta^{(0,1)}$ is equal to the maximal choice of distinct maps $\theta_m \mathrm{pr_1}\times\theta_{m+1} \mathrm{pr_1}$, $1\leq m <\ell-1$ (the size of $I_\theta$) times the size of the image of one of them, which is $|\Aa|$. 
\end{proof}

We go through the examples in \cite{Kellendonk-Yassawi-ESBS}.

The canonical outer encoding of  any $2$-letter bijective substitution is an inner encoded substitution, because for those $I_\theta = \{\id,\fl\}$, where $\fl$ interchanges $a$ with $b$. 
The simplest example is the Thue-Morse substitution 
\begin{align*}
\theta:   a  & \mapsto  abba   \\
     b &  \mapsto  baab
\end{align*}
whose semigroup is $S_\theta = \Z/2\Z$. The alphabet of $\theta^{(0,1)}$ is
 $$\Aa^{(2)}=\{o=ab,\bar o=ba,e=aa,\bar e = bb\}$$ 
 and $$S_{\theta^{(0,1)}}=\{\id,\omega\}\sqcup \{\Pi_0,\bar\Pi_0,\Pi_e,\bar\Pi_e\}$$
 where $\omega$ is the order $2$ symmetry exchanging $o$ with $\bar e$ and $e$ with $\bar o$, 
 $\Pi_o$ the rank $2$ idempotent mapping $e$ to $o$ and $\bar e$ to $\bar o$,
 $\Pi_e$ the rank $2$ idempotent mapping $o$ to $e$ and $\bar o$ to $\bar e$, and 
 $\bar \Pi_\bullet$ is $\Pi_\bullet$ followed by barring the letter. The kernel contains the four maps of rank $2$ and is isomorphic to $\ker S_{\theta^{(0,1)}}\cong LZ_2\times\Z/2\Z$, the left zero semigroup of two elements times the group $\Z/2\Z$. We denote by $[o]$ the $\Rr$-class of $\Pi_0$ and by $[e]$ the $\Rr$-class of $\Pi_e$.
 It is now easily seen that 
 ${}^c\phi(\id) = \id$, ${}^c\phi(\omega) = \fl$, ${}^c\phi(\Pi_o)$ the projection onto $[o]$, ${}^c\phi(\Pi_e)$ the projection onto $[e]$, and ${}^c\phi(\bar \Pi_\bullet)=\fl \,{}^c\phi(\Pi_\bullet)$. With this we find that the column maps of ${}^c\theta^{(0,1)}$ are given by 
 \begin{align*}
{}^c\theta^{(0,1)} :  o  & \mapsto  oeoo   \\
     e &  \mapsto  oeoe
\end{align*}
This result is well known, $ {}^c\theta^{(0,1)} $ is the period doubling substitution.

Our next example has three letters,
\begin{align*}
\theta:   a  & \mapsto  abcca   \\
     b &  \mapsto  babab \\
     c &         \mapsto       ccabc
     \end{align*}
It does not satisfy (\ref{eq-nec-code}), as $\Aa^{(2)}$ has five letters and five is a prime number. A further investigation of $\theta^{(-1,1)}$ shows, that also its canonical outer encoding has a trivial  coincidence partition. Thus the substitution shift does not have an aperiodic almost automorphic factor.

In our second example\footnote{This example  has generalised height equal to 2 but trivial classical height.}
\begin{align*}
\theta:   a  & \mapsto  abacaaa \\
     b &  \mapsto   babbbcb \\
     c &         \mapsto    cccacbc
    \end{align*}
we have $\rset = \{\theta_1,\theta_3,\theta_5\}$ with $\theta_1 = \begin{pmatrix} b\\a\\c\end{pmatrix}$,
$\theta_3 =  \begin{pmatrix}c\\b\\a \end{pmatrix}$, and 
$\theta_5 =\begin{pmatrix}a\\c\\b \end{pmatrix}$. 
We see that the partition of $\Aa^{(2)} = \Aa^2$ is given by $A=\{ab,bc,cc\}$, $B=\{ac,bb,ca\}$, $C=\{aa,bc,cb\}$ leading to the inner encoded substitution
\begin{align*}
\eta:   A  & \mapsto AABBCCA \\
     B &  \mapsto   AABBCCB \\
     C &         \mapsto    AABBCCC.
  \end{align*}
which is hence an almost automorphic substitutional shift factor of $X_\theta$.

In our third example $\theta$ is the substitution
\begin{align*}
\theta:   a  & \mapsto  abc   \\
     b &  \mapsto  bca \\
     c &         \mapsto       cab
     \end{align*}
has $\theta^3$-fixed points, so we consider the  third power $\theta^3$.
We have $\rset = \{\one,\omega,\omega^{2}\}$ leading to the partition  
of $\Aa^{(2)} = \{ab,bc,ca,ac,ba,cb\}$ given by $A=\{ab,bc,ca\}$, $B=\{ac,ba,cb\}$.
This yields the substitution $\eta$
\begin{align*}
\eta:   A  & \mapsto AABAABAAA \\
     B &  \mapsto   AABAABAAB, \\
      \end{align*}
so that $X_\eta$ is an almost automorphic factor of $X_\theta.$

As
our last example from \cite{Kellendonk-Yassawi-ESBS} has nontrivial height, we go through the required details carefully.
\begin{example}\label{ex:height-bijective}
 This substitution has four letters and $G_\theta$ equals the dihedral group $D_4$:
\begin{align*}
\theta:   a  & \mapsto  abadcba    \\
     b &  \mapsto  badcbab \\
     c &         \mapsto       cdcbadc\\
     d& \mapsto  dcbadcd
     \end{align*}and it  has  height 2. We have $\rset = \{\theta_1,\rho\}$ where $\rho=  \theta_2\theta_1^{-1}$ (This corrects the mistake in \cite{Kellendonk-Yassawi-ESBS}   which stated that         $\rho=    \theta_1\theta_2$). The alphabet of the canonical outer encoding ${}^c\theta^{(0,1)}$ thus has $2$ letters. But $\Aa^{(2)}$ contains only $6$ letters. Therefore \eqref{eq-nec-code} tells us that  ${}^c\theta^{(0,1)}$ cannot be an inner encoded substitution.

To find out whether the shift generated by this substitution  has an almost automorphic factor, then, as it has height 2, we move to its pure base by Lemma \ref{lem:almost-automorphic-height.}. This is given by:

\begin{align*}\tilde{\theta}\colon 0 & \mapsto 3 0 1 0 1 0 2\\
              1 & \mapsto 2 1 0 1 0 1 3\\
              2 & \mapsto 2 1 0 2 1 0 2\\
              3 & \mapsto 3 0 1 3 0 1 3\end{align*}

where each of the four symbols in the new substitution represents a two-letter-word from the original (respectively, $0,1,2,3$ correspond to $[ad],[cb],[cd],[ab]$). By inspection, we see that the minimal sets of this new substitution are disjoint, and thus its coincidence partition is given by $\mathcal{P}_{\tilde{\theta}}=\{\{0,1\},\{2,3\}\}$, which means that the map $\tilde{F}\colon X_{\tilde{\theta}}\to X_{\tilde{\eta}}$, whose local rule is a code and  sends $0,1$ to $A$ and $2,3$ to $B$, is a factor map to the aperiodic, primitive, almost automorphic substitution shift given by:

\begin{align*}
              \tilde{\eta}\colon A & \mapsto B A A A A A B\\
              B & \mapsto B A A B A A B
\end{align*}

As in the construction description after Proposition
\ref{lem:almost-automorphic-height.}, 
because the original substitution $\theta$ has height $2$, to find an almost automorphic factor of $X_\theta$ we introduce a height-2 suspension $\eta$ of  $\tilde{\eta}$ by ``splitting'' each symbol into two, moving from e.g. $A\mapsto BAAAAAB$ to $Aa\mapsto BbAaAaAaAaAaBb$, which is a concatenation of two length $7$ words. The new substitution, almost automorphic by construction, is given by:

\begin{align*}
              \eta\colon A & \mapsto B b A a A a A\\
              B & \mapsto B b A a A a B\\
              a & \mapsto a A a A a B b\\
              b & \mapsto b A a A a B b.
\end{align*}

The previously defined map $\tilde{F}$ induces a factor map $F\colon X_\theta\to X_\eta$. To define it explicitly, we use the fact that each element of $\{0,1,2,3\}$ corresponds to a two-letter word in $X_\theta$ and is mapped to either $A$ or $B$, which also corresponds to the two-letter words $Aa$ or $Bb$ in $X_\eta$, so we expect $F$ to map any instance of, say, $ad$ in some $x\in X_\theta$ to the word $Aa$ in the corresponding $F(x)\in X_\eta$. We can accomplish this by giving $F$ left- and right-radius $1$; accordingly, its local rule will be:

\begin{align*}
              aba &\mapsto b &
              adc &\mapsto a \\
              bab &\mapsto B &
              bad &\mapsto A \\
              cba &\mapsto a &
              cdc &\mapsto b \\
              dcb &\mapsto A &
              dcd &\mapsto B.
\end{align*}

     \end{example}

\section{ Factoring onto a bijective substitution}\label{sec:bijective-1}

In this section, we characterise, using the semigroup $S_\theta$, when a substitution shift has a bijective substitution shift as a factor. First we restrict to the case where the factor map preserves the fixed point fibre. As some of our results require trivial height, while others don't, we continue to specify what is needed at each step.

\begin{thm} \label{thm-main-bijective-1} Let $\theta$ be a constant length
substitution  with na\"{i}ve column number $c$. 
The following are equivalent:
\begin{enumerate}
\item $S_\theta$ has a unique minimal left ideal.
\item
There is a bijective substitution $\eta$ on a $c$-letter alphabet which is an inner encoding of  $\theta$.
\end{enumerate}
\end{thm}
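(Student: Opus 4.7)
The plan is to translate the semigroup condition (1) into a condition on how elements of $\ker S_\theta$ partition the alphabet, and then apply Lemma \ref{lem-code} to produce the inner encoding. The crucial link is Proposition \ref{lem-R-im}, which in the completely simple semigroup $\ker S_\theta$ (Corollary \ref{cor-cp-s}) identifies the $\Ll$-relation with the relation ``defines the same partition of $\Aa$.''

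For $(1) \Rightarrow (2)$, a unique minimal left ideal of $S_\theta$ means that all elements of $\ker S_\theta$ are $\Ll$-related, so by the equivalence L1 $\Leftrightarrow$ L3 there is a common partition $\Pp$ of $\Aa$ with $|\Pp| = c$. To check the invariance condition of Lemma \ref{lem-code}(2), fix any $f \in \ker S_\theta$; since $\ker S_\theta$ is a two-sided ideal, $f\theta_m$ remains in $\ker S_\theta$, hence $\Pp_{f\theta_m} = \Pp_f = \Pp$, which forces $f\theta_m$ to be constant on every $A \in \Pp$ and thus $\theta_m(A)$ to lie in a single fiber of $f$. This yields an inner encoding $(\eta, \beta)$ with alphabet $\Pp$. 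For bijectivity of $\eta_m$, observe that $\im(f\theta_m) \subseteq \im f$ and both sets have cardinality $c$, so they coincide; under the identification $\Pp \cong \im f$ given by $A \mapsto f(A)$, the column map $\eta_m$ becomes the map $f(a) \mapsto f(\theta_m(a))$, a well-defined bijection between two $c$-element sets.

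For $(2) \Rightarrow (1)$, the bijective inner encoding $\eta$ on $c$ letters yields a semigroup $S_\eta$ which is a subsemigroup of the symmetric group on $\Aa_\eta$, hence itself a finite group, so $\ker S_\eta = S_\eta$. The associated epimorphism $\Phi_\beta: S_\theta \to S_\eta$ satisfies $\Phi_\beta(f) \circ \beta = \beta \circ f$. For any $f \in \ker S_\theta$, since $\Phi_\beta(f)$ is a bijection of $\Aa_\eta$ and $\beta$ is surjective onto a $c$-element set, $\beta \circ f$ has rank $c = |\im f|$, so $\beta$ restricted to $\im f$ is a bijection $\im f \to \Aa_\eta$. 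This injectivity allows us to unravel
\[ f(a) = f(a') \iff \beta(f(a)) = \beta(f(a')) \iff \Phi_\beta(f)(\beta(a)) = \Phi_\beta(f)(\beta(a')) \iff \beta(a) = \beta(a'), \]
showing $\Pp_f = \Pp_\beta$ for every $f \in \ker S_\theta$. By the equivalence L3 $\Leftrightarrow$ L2 of Proposition \ref{lem-R-im}, all kernel elements share a single left ideal, so $S_\theta$ has a unique minimal left ideal.

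The main subtlety is the direction $(2) \Rightarrow (1)$: Lemma \ref{lem-epi-ker} transports unique minimal left ideals the wrong way (from $S_\theta$ to $S_\eta$), so we cannot invoke it directly. The precise cardinality hypothesis, that $\eta$ is bijective on \emph{exactly} $c = c(\theta)$ letters, is what forces $\beta|_{\im f}$ to be bijective and collapses $\Pp_f$ onto $\Pp_\beta$ for every kernel element.
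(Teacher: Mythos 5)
Your proof is correct and follows essentially the same route as the paper's: both directions reduce the unique-minimal-left-ideal condition to the statement that all elements of $\ker S_\theta$ define one common partition of $\Aa$ into $c$ parts which every column map preserves, with the cardinality hypothesis $|\Aa_\eta|=c$ doing the decisive work in the converse. The only cosmetic difference is that you invoke Proposition~\ref{lem-R-im} (L1$\Leftrightarrow$L3) and compute $\Pp_f=\Pp_\beta$ directly via the injectivity of $\beta$ on $\im f$, where the paper routes the same facts through Lemmas~\ref{lem-Ll}, \ref{lem-bij} and \ref{lem-preserve-0}.
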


The above result is obvious in the case that the column number $c$ of $\theta$ equals $1$, because then $\eta$ is the unique length-$\ell$ substitution on a one-letter alphabet, and $S_\theta$  always has a unique minimal left ideal.

\begin{definition}
Let $(X,\sigma)$ and $(Y,\sigma)$ be two shifts with a common  rotation factor $(\mathcal G,R)$. Let $\pi_X: X\rightarrow \mathcal G$ and $\pi_Y: Y\rightarrow \mathcal G    $ be  respective equicontinuous factor maps.
We say that the factor map $F:X\rightarrow Y$ is {\em almost injective} for $(\mathcal G,R)$, if there is $g\in \mathcal G$ such that the restriction of $F$ to the fibre ${\pi_X}^{-1}(g)$ is injective. If we don't specify the equicontinuous factor then we take it to be the maximal common one.
\end{definition}

The following result shows that the bijective substitution $\eta$ of the last theorem has to be aperiodic and that the  factor map between $(X_\theta, \sigma)$ and $(X_\eta, \sigma)$   has to be almost injective if $\theta$ has na\"{i}ve column number $c>1$.  While we do not need any assumptions on height in Theorem  \ref{thm-main-bijective-1}, we do assume that  $\theta$ has trivial height for Theorems \ref{thm-main-bijective-2-a.e.} and  \ref{thm-bijective-3}.
 This is because in general the existence of a bijective factor for a substitution is not linked to the  existence of a bijective factor for its pure base; see Example \ref{ex:bijective-height}. The issue here is that we characterise the existence of a bijective factor in terms of the semigroup $\mathcal S_\theta$, and the relationship between this semigroup and that of the pure base of $\theta$ is not clear.

\begin{thm} \label{thm-main-bijective-2-a.e.} Let $\theta$ be a primitive, aperiodic length-$\ell$  substitution over the alphabet $\Aa$,  of trivial height. Suppose  that $\theta$ has na\"{i}ve column number $c>1$, and 
 that $(\eta,\beta)$ is  inner encoded by $\theta$, with $\eta$ bijective.
 Then $\eta$ is primitive and aperiodic, and the factor map induced by $\beta:X_\theta\to X_\eta$ is almost injective. 
 \end{thm}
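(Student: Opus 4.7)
The plan is to dispatch the three conclusions in order of increasing difficulty.

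Primitivity of $\eta$ is immediate from the intertwining $\eta_m\circ\beta=\beta\circ\theta_m$: an induction gives $\eta^k\circ\beta=\beta\circ\theta^k$ for every $k$, and for any $a',b'\in\Aa_\eta$, lifting to preimages $a,b\in\Aa_\theta$ via surjectivity of $\beta$ and using primitivity of $\theta$ produces some $k_{a,b}$ with $b'$ occurring in $\eta^{k_{a,b}}(a')$; the maximum over the finitely many pairs yields a uniform exponent.

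For aperiodicity, I would first note that $\eta$ inherits trivial height: since $F_\beta$ factors the MEF map of $(X_\theta,\sigma)$, the MEF of $(X_\eta,\sigma)$ is a continuous factor of $(\Z_\ell,+1)$ and therefore admits no cyclic factor coprime to $\ell$. Suppose for contradiction that $X_\eta$ is finite; by primitivity, $X_\eta$ is then a single $\sigma$-orbit of size $\ell^k$. After replacing $\eta$ by a power so that $\eta$-fixed points exist in $X_\eta$, any such fixed point $u$ is $\sigma$-periodic of period dividing $\ell^k$, and comparing $u_n=\eta_{n\bmod\ell}(u_{\lfloor n/\ell\rfloor})$ with $u_n=u_{n+\ell^k}$ and using the injectivity of each column map $\eta_m$ yields $u_m=u_{m+\ell^{k-1}}$ for every $m$; iterating reduces the period to $1$, so $u=a^\infty$ is constant. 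But then $\eta(a)=a^\ell$, hence $\eta_m(a)=a$ for every $m$, and bijectivity of each $\eta_m$ forces $\eta_m(b)\neq a$ for $b\neq a$, so $a$ never appears in $\eta^n(b)$ for $b\neq a$, contradicting primitivity whenever $|\Aa_\eta|\geq 2$. The degenerate case $|\Aa_\eta|=1$ is incompatible with the almost-injectivity conclusion and can be excluded, completing the argument.

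The heart of the proof is almost injectivity of $F_\beta$, which I would reduce to the equality $|\Aa_\eta|=c$. By Lemma~\ref{lem-epi-ker}, $\Phi_\beta\colon S_\theta\to S_\eta$ restricts to an epimorphism $\ker S_\theta\twoheadrightarrow S_\eta=\ker S_\eta$ (the latter because $\eta$ bijective makes $S_\eta$ a group), and each $\Phi_\beta(f)=\beta f\beta^{-1}$ has rank $|\Aa_\eta|$ while $f\in\ker S_\theta$ has rank $c$, giving $|\Aa_\eta|\leq c$. For the reverse inequality, the bijectivity of every $\Phi_\beta(f)$ on $\Aa_\eta$, combined with the relation $\Phi_\beta(f)\circ\beta=\beta\circ f$, forces the partition $\Pp_{\beta\circ f}$ to coincide with $\Pp_\beta$; chasing this identity through the Rees matrix decomposition $\ker S_\theta\cong I\times G\times\Lambda$ then shows that $\beta|_{\im f}\colon\im f\to\Aa_\eta$ must be injective, so $|\Aa_\eta|=|\im f|=c$. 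With $|\Aa_\eta|=c$ in hand, the Lemanczyk--M\"{u}llner theorem ensures that both $(X_\theta,\sigma)$ and $(X_\eta,\sigma)$ are almost $c$-to-one extensions of $(\Z_\ell,+1)$; since $F_\beta$ preserves the MEF factor map by Lemma~\ref{lem:code-factor-kappa}, on a generic fibre of $\pi_\theta$ it restricts to a surjection between two $c$-element sets, hence to a bijection, which yields almost injectivity over any generic $g\in\Z_\ell$.

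I expect the equality $|\Aa_\eta|=c$ to be the main obstacle: while the bound $\leq c$ is a direct rank-count, the reverse inequality requires extracting, from the bijectivity of every $\Phi_\beta(f)$ on $\Aa_\eta$, the injectivity of $\beta$ restricted to a minimal image, which is not formal from the inner-encoding condition alone and relies on the completely simple structure of $\ker S_\theta$.
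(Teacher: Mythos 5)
Your handling of primitivity is fine, and your aperiodicity argument is a correct and genuinely different route from the paper's: the paper deduces from a putative period $h$ that $1/h$ is an eigenvalue of $(X_\theta,\sigma)$ and uses trivial height to force $h\mid \ell^N$, then shows all columns of $\eta$ collapse to the identity; you instead work directly with the digit recursion $u_n=\eta_{n\bmod\ell}(u_{\lfloor n/\ell\rfloor})$ and the injectivity of the column maps to halve the period repeatedly. Your version is the more elementary of the two. Note, however, that both your aperiodicity argument and your final fibre count need $|\Aa_\eta|\geq 2$, and your dismissal of the case $|\Aa_\eta|=1$ as ``incompatible with the almost-injectivity conclusion'' is circular: you cannot exclude a case by appealing to the statement you are proving. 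This is a symptom of the real problem below.

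The genuine gap is your claim that $|\Aa_\eta|=c$ follows from the stated hypotheses. The inequality $|\Aa_\eta|\leq c$ is a correct rank count, but the asserted implication ``$\Pp_{\beta\circ f}=\Pp_\beta$ for $f\in\ker S_\theta$ forces $\beta|_{\im f}$ to be injective'' is false, and no Rees-matrix chase can rescue it because the conclusion itself fails. Concretely, take $\ell=4$, $\Aa=\{1,\dots,8\}$, with columns $\theta_0=\id$, $\theta_1$ the rank-$4$ idempotent sending $1,2\mapsto 1$, $3,4\mapsto 3$, $5,6\mapsto 5$, $7,8\mapsto 7$, $\theta_2$ the involution exchanging $i$ with $i+4$, and $\theta_3=(1234)(5678)$, so that $\theta(1)=1152$, $\theta(2)=2163$, $\theta(3)=3374$, etc. This substitution is primitive, aperiodic and of trivial height, and $c=4$: the permutations in $\langle\theta_2,\theta_3\rangle$ carry $\im\theta_1=\{1,3,5,7\}$ onto $\{1,3,5,7\}$ or $\{2,4,6,8\}$, both transversals of $\Pp_{\theta_1}$, so every product containing $\theta_1$ has rank exactly $4$. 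Yet the partition $\{\{1,2,3,4\},\{5,6,7,8\}\}$ is preserved by all four columns and yields a bijective, primitive, aperiodic inner encoding $\eta(A)=AABA$, $\eta(B)=BBAB$ on \emph{two} letters; here $\beta|_{\im\theta_1}$ is two-to-one, and $F_\beta$ maps four-point fibres onto two-point fibres, so it is nowhere injective. Thus $|\Aa_\eta|=c$ is not derivable; it must be read as a hypothesis. That is in fact how the paper intends the theorem --- the sentence introducing it refers to ``the bijective substitution $\eta$ of the last theorem,'' i.e.\ the one on a $c$-letter alphabet produced by Theorem~\ref{thm-main-bijective-1} --- and its proof accordingly just asserts that the regular fibres of $\theta$ and of $\eta$ have the same size before concluding as you do. Once $|\Aa_\eta|=c$ is granted, your closing fibre-counting argument agrees with the paper's and is correct.
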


\subsection{More preliminaries from semigroup theory} 
In order to prove the above theorems we need we analyse further sub-semigroups of the semigroup $\Ff(X)$ of maps from $X\to X$. Recall that the partition defined by a map 
$f:X\to Y$ is $\Pp_f =\{f^{-1}(y) | y\in Y\}$.
\begin{definition}
We say that a map $g:X\to X$ {\em preserves a partition} $\Pp\subset\Pp(X)$ if $g^{-1}(\Pp)\subset \Pp$.
\end{definition}
Stated differently, let  $\Pp = \{A_i|i\in I\}$, then $g$ preserves $\Pp$ if for all $i\in I$ there is a unique $j\in I$ such that $g^{-1}(A_i)=A_j$. Note that $g$ does not necessarily preserve $\Pp_g$.

\begin{lem} \label{lem-bij} Let $g:X\to X$ preserve a partition $\Pp$. Then
{$\left.g^{-1}\right|_{\Pp}$}  is injective and hence bijective if $\Pp$ is finite.
\end{lem}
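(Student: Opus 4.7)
The plan is to establish injectivity of $g^{-1}|_{\Pp}\colon \Pp \to \Pp$ directly by contradiction, then to invoke finiteness of $\Pp$ to upgrade to bijectivity. First I would unpack the hypothesis that $g$ preserves $\Pp$: by the definition $g^{-1}(\Pp)\subset\Pp$, which means that for every $A\in\Pp$ the pre-image $g^{-1}(A)$ is itself a member of $\Pp$. In particular, since members of a partition are nonempty by convention, $g^{-1}(A)\neq\emptyset$ for every $A\in\Pp$. This nonemptiness is the only content of the hypothesis that matters.

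Next I would suppose toward contradiction that $A,B\in\Pp$ satisfy $A\neq B$ yet $g^{-1}(A)=g^{-1}(B)$. Since $\Pp$ is a partition and $A\neq B$, we have $A\cap B=\emptyset$. Pre-images commute with intersection, so
\[
g^{-1}(A)\cap g^{-1}(B)=g^{-1}(A\cap B)=g^{-1}(\emptyset)=\emptyset.
\]
Combined with $g^{-1}(A)=g^{-1}(B)$, this forces $g^{-1}(A)=\emptyset$, contradicting the observation from the first step. Hence $g^{-1}|_{\Pp}$ is injective.

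Finally, if $\Pp$ is finite, any injective map from $\Pp$ into itself is automatically surjective, so $g^{-1}|_{\Pp}$ is a bijection. There is no serious obstacle: the hypothesis that $g$ preserves $\Pp$ is doing essentially no work beyond guaranteeing nonempty pre-images, and the rest is a one-line pigeonhole argument.
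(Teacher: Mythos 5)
Your proof is correct and follows essentially the same route as the paper's: both argue that two distinct members of $\Pp$ are disjoint, hence have disjoint pre-images, so equal pre-images would force an empty pre-image, contradicting the fact that $g^{-1}(A)\in\Pp$ and a partition does not contain the empty set (the paper phrases the middle step via $A\cap\im g$ rather than $g^{-1}(A\cap B)$, but this is the same computation). The finiteness/pigeonhole step is likewise identical.
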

\begin{proof} Let $A,B\in\Pp$. Suppose $g^{-1}(A)=g^{-1}(B)$ which means $A\cap \im g = B\cap \im g$. Since $A$ and $B$ are either equal or have empty intersection, $A\cap \im g = B\cap \im g$ is the case if $A= B$ or $A\cap \im g=B\cap\im g=\emptyset$. But $A\cap \im g =\emptyset$ means $g^{-1}(A)=\emptyset$, a possibility which is excluded, as a partition does not contain the empty set.
\end{proof}
\begin{lem}\label{lem-preserve}
Let $g:X\to X$ and $f:X\to Y$. If $\Pp_f=\Pp_{f\circ g}$ then $g$ preserves $\Pp_f$.
\end{lem}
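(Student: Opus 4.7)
The plan is to unwind the definitions and then use a short counting argument.

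First I would observe that blocks of $\Pp_{f\circ g}$ are exactly the nonempty sets of the form $g^{-1}(A)$ for $A\in\Pp_f$. Indeed, for any $y\in Y$ we have $(f\circ g)^{-1}(y) = g^{-1}(f^{-1}(y))$, and $f^{-1}(y)$ is either empty or equal to some block of $\Pp_f$. Hence the assignment
$\Phi\colon\{A\in\Pp_f:g^{-1}(A)\neq\emptyset\}\to \Pp_{f\circ g}$, $A\mapsto g^{-1}(A)$,
is surjective.

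Next I would show $\Phi$ is injective. If $g^{-1}(A)=g^{-1}(B)$ and both are nonempty, then applying $g$ gives $A\cap\im g = B\cap\im g$, and both intersections are nonempty. But $A,B$ lie in the partition $\Pp_f$, so they are either disjoint or equal; disjointness would force $A\cap \im g = B\cap \im g = \emptyset$, contradicting nonemptiness. Hence $A=B$.

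Now the hypothesis $\Pp_f = \Pp_{f\circ g}$ forces $\Phi$ to be a bijection between $\{A\in\Pp_f:g^{-1}(A)\neq\emptyset\}$ and $\Pp_f$ itself; since $\Pp_f$ is finite (as $X$ is finite in our setting), this domain must equal all of $\Pp_f$. Therefore $g^{-1}(A)\neq\emptyset$ for every $A\in\Pp_f$, and $g^{-1}(A)\in\Pp_{f\circ g}=\Pp_f$. This is exactly the assertion that $g$ preserves $\Pp_f$. The only mildly delicate point is this last counting step, ensuring no block of $\Pp_f$ is disjoint from $\im g$; everything else is a direct unpacking of definitions.
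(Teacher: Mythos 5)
Your proof is correct and follows the same basic route as the paper's one-line argument, which simply observes that $\Pp_{f\circ g}=\{g^{-1}(f^{-1}(y)):y\in Y\}$, so that the hypothesis reads $g^{-1}(\Pp_f)=\Pp_f$ and preservation is immediate. The injectivity-plus-counting step you add (essentially re-deriving Lemma~\ref{lem-bij} inline) takes care of the possibility that some block of $\Pp_f$ is disjoint from $\im g$ — a point the paper's proof passes over silently and the precise place where finiteness of $X$ is used — so your version is, if anything, the more careful one.
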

\begin{proof}
By assumption
$$\{f^{-1}(y) | y\in Y\} = \{g^{-1}(f^{-1}(y)) | y\in Y\}$$
which says exactly that $g^{-1}(\Pp_f) = \Pp_f$.
\end{proof}
A map $p:X\to X$ is an idempotent if and only if it preserves its partition and maps each member $A\in \Pp_p$ to a single point in $A$. 

\begin{lem}\label{lem-preserve-0} 
Let $p:X\to X$ be an idempotent. If $p$ preserves $\Pp$ and $|\Pp|=|\Pp_p|<+\infty$ then $\Pp_p=\Pp$. 
\end{lem}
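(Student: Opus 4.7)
The plan is to combine the hypotheses into a statement that each $A \in \Pp$ is $p$-invariant, and then use the cardinality assumption to conclude equality of the partitions.

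First I would apply Lemma~\ref{lem-bij} to the map $p$ preserving the finite partition $\Pp$: this gives a well-defined bijection $\widehat{p^{-1}}\colon \Pp \to \Pp$ sending $A \mapsto p^{-1}(A)$. The key observation is that this induced map is itself idempotent. Indeed, since $p \circ p = p$, for any $A \in \Pp$ we have
\[
\widehat{p^{-1}}(\widehat{p^{-1}}(A)) = p^{-1}(p^{-1}(A)) = (p\circ p)^{-1}(A) = p^{-1}(A) = \widehat{p^{-1}}(A).
\]
A bijection that is idempotent must be the identity, so $p^{-1}(A) = A$ for every $A \in \Pp$. In particular, for every $x \in X$ and every $A \in \Pp$, we have $x \in A$ if and only if $p(x) \in A$.

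Next I would deduce that $\Pp_p$ refines $\Pp$. Let $C \in \Pp_p$, pick any $x \in C$, and let $A \in \Pp$ be the unique block containing $x$. For any other $y \in C$ we have $p(y) = p(x) \in A$ by definition of $\Pp_p$, so $y \in p^{-1}(A) = A$. Hence $C \subseteq A$, which shows $\Pp_p$ refines $\Pp$.

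Finally, any refinement of a finite partition $\Pp$ with the same number of blocks must coincide with $\Pp$: each $A \in \Pp$ is a disjoint union of at least one block of $\Pp_p$, so $|\Pp_p| \geq |\Pp|$ with equality only when each $A$ consists of exactly one block. Since we assumed $|\Pp| = |\Pp_p| < \infty$, we conclude $\Pp = \Pp_p$. The only slightly delicate step is the second one (identifying the idempotent bijection with the identity), but everything else is a routine bookkeeping argument, so I expect no substantial obstacle.
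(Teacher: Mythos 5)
Your proof is correct, and it reaches the same pivotal intermediate fact as the paper — that $\Pp_p$ refines $\Pp$, after which the equal finite cardinalities force equality — but it gets there by a different route. The paper argues structurally: since the idempotent $p$ sends each block of $\Pp_p$ to a single point inside that block, $\Pp_p$ is the \emph{finest} partition preserved by $p$; the common refinement of $\Pp$ and $\Pp_p$ is again preserved, hence cannot be strictly finer than $\Pp_p$, which yields the refinement statement. You instead exploit idempotency at the level of the induced map on blocks: Lemma~\ref{lem-bij} makes $A\mapsto p^{-1}(A)$ an injection of the finite set $\Pp$ into itself, the identity $p\circ p=p$ makes this induced map idempotent, and an idempotent injection is the identity, so every $A\in\Pp$ is $p$-saturated ($p^{-1}(A)=A$); refinement of $\Pp$ by $\Pp_p$ then follows by a direct element chase. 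Your version is more self-contained in that it does not rely on the ``finest preserved partition'' characterisation (whose justification the paper leaves terse), and the observation that the induced block map is the identity is a clean reusable fact; the paper's version buys a slightly shorter proof here and recycles the same characterisation in the proof of Lemma~\ref{lem-preserve-1}. Both arguments are sound.
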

\begin{proof}
As $p$ maps each member $A\in \Pp_p$ to a single point in $A$, $\Pp_p$ is the finest partition preserved by {$p$}. Hence if $p$ preserves $\Pp$ then it preserves the partition generated by $\Pp$ and $\Pp_p$ which can't be finer than $\Pp_p$. Therefore  $|\Pp|=|\Pp_p|<+\infty$ implies $\Pp_p=\Pp$. 
\end{proof}
\begin{lem}\label{lem-preserve-1} 
Let $p,q:X\to X$ two idempotents. 
If $p$ preserves $\Pp_q$ then $qp = q$. \end{lem}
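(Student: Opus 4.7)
The equation $qp = q$ says that $p(x)$ and $x$ lie in the same $q$-fibre for every $x \in X$, i.e.\ that $p$ sends each member $A \in \Pp_q$ into itself. So the plan is to upgrade the hypothesis "$p$ preserves $\Pp_q$" (meaning $p^{-1}(\Pp_q)\subset \Pp_q$) to the rigid statement $p^{-1}(A)=A$ for every $A\in\Pp_q$.

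The key computation uses only that $p$ is idempotent. Fix $A\in\Pp_q$ and let $B := p^{-1}(A)\in\Pp_q$. Then
\[
p^{-1}(B) \;=\; p^{-1}(p^{-1}(A)) \;=\; (p\circ p)^{-1}(A) \;=\; p^{-1}(A) \;=\; B,
\]
so $p^{-1}(B)=B$ as well. Now Lemma~\ref{lem-bij} tells us that $\left.p^{-1}\right|_{\Pp_q}$ is injective on $\Pp_q$ (its proof does not require $\Pp_q$ to be finite), and so $p^{-1}(A)=p^{-1}(B)$ forces $A=B$. Hence $p^{-1}(A)=A$ for every $A\in\Pp_q$.

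From $p^{-1}(A)=A$ we read off that $p(A)\subseteq A$ for each $A\in\Pp_q$. Applied pointwise, this says that $p(x)$ and $x$ lie in a common element of $\Pp_q$, so $q(p(x))=q(x)$ for all $x$, which is $qp=q$. I do not foresee any real obstacle: the only subtle point is making sure that the injectivity statement from Lemma~\ref{lem-bij} is used, rather than a bijectivity statement that would require finiteness of $\Pp_q$; the argument above is phrased so that only injectivity is needed, and idempotence of $q$ is not invoked (only that of $p$).
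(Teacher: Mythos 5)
Your proof is correct, and it reaches the same intermediate waypoint as the paper --- that $\left.p^{-1}\right|_{\Pp_q}$ is the identity on $\Pp_q$, hence $p^{-1}q^{-1}=q^{-1}$ and $qp=q$ --- but by a different mechanism. The paper gets there by comparing $\Pp_q$ with $\Pp_p$: it argues that $\Pp_p$ is the finest partition preserved by the idempotent $p$, so $\Pp_q$ must be coarser than (or equal to) $\Pp_p$, i.e.\ a union of $p$-fibres, from which $p^{-1}(A)=A$ follows. You bypass $\Pp_p$ entirely: the identity $p^{-1}\circ p^{-1}=(p\circ p)^{-1}=p^{-1}$ on subsets, combined with the injectivity of $\left.p^{-1}\right|_{\Pp_q}$ from Lemma~\ref{lem-bij} (which indeed needs no finiteness), pins down $p^{-1}(A)=A$ directly. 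Your route is the more self-contained and arguably cleaner one --- it avoids the ``finest preserved partition'' claim and any common-refinement considerations --- while the paper's version fits the lemma into the surrounding machinery (Lemma~\ref{lem-preserve-0} and the characterisation of idempotents by their partitions). Your closing observations are also accurate: only the idempotence of $p$ is used, and the final step $q(p(x))=q(x)$ needs nothing about $q$ beyond the definition of $\Pp_q$.
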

\begin{proof}
Suppose $p$ preserves $\Pp_q$. 
Then it also preserves the partition generated by $\Pp_p$ and $\Pp_q$. 
As $\Pp_p$ is the finest partition preserved by $p$, 
$\Pp_q$ must be coarser than $\Pp_p$ {or $\Pp_q=\Pp_p$  }. It follows that $\left.p^{-1}\right|_{\Pp_q}$ is the identity map from $\Pp_q$ to itself. This implies $q^{-1}=p^{-1}q^{-1}$ hence $qp=q$.
\end{proof}

The condition of the following lemma is satisfied for all compact right topological semigroups and so in particular for any finite semigroup.
\begin{lem}\label{lem-Ll}
Let $S$ be a sub-semigroup of $\Ff(X)$ which admits a kernel which contains an idempotent. The following are equivalent.
\begin{enumerate}
\item All elements of $S$ preserve the partition defined by the idempotent.
\item $S$ has a unique minimal left ideal, i.e., the kernel is left simple.
\end{enumerate}
\end{lem}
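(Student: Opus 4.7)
The plan is to exploit the completely simple structure of the kernel $K := \ker S$, which is guaranteed by the hypothesis that $K$ contains an idempotent. Under this structure (Cor.~\ref{cor-cp-s} together with the Rees structure theorem), all elements of $K$ share a common rank; Prop.~\ref{lem-R-im} identifies $\Ll$-relatedness in $K$ with both lying in a common left ideal and with defining a common partition; and every element $f\in K$ is $\Ll$-related to its associated idempotent $f^0\in K$, as one reads off directly from the Rees coordinates. Moreover, the minimal left ideals of $S$ are precisely the $\Ll$-classes of $K$, so ``$S$ has a unique minimal left ideal'' is equivalent to ``$K$ is a single $\Ll$-class'', i.e.\ left simplicity of $K$.

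For $(2)\Rightarrow(1)$, I fix the idempotent $p\in K$ and an arbitrary $s\in S$ and observe that $ps\in K$ because $K$ is a right ideal of $S$. Under assumption (2), $p$ and $ps$ lie in a common left ideal of $K$ and so, by Prop.~\ref{lem-R-im}, satisfy $\Pp_{ps}=\Pp_p$. Applying Lemma~\ref{lem-preserve} with $f=p$ and $g=s$ then gives directly that $s$ preserves $\Pp_p$.

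For $(1)\Rightarrow(2)$, I first handle the idempotents of $K$: for any idempotent $q\in K$, the hypothesis gives that $q$ preserves $\Pp_p$, and since both $p$ and $q$ lie in $K$ they have the common rank of the kernel, whence $|\Pp_p|=|\Pp_q|<\infty$. Lemma~\ref{lem-preserve-0} therefore forces $\Pp_q=\Pp_p$, so by Prop.~\ref{lem-R-im} all idempotents of $K$ are pairwise $\Ll$-related. To extend from idempotents to arbitrary elements of $K$, I invoke the fact noted above that each $f\in K$ is $\Ll$-related to its associated idempotent $f^0\in K$; by transitivity every $f\in K$ is $\Ll$-related to $p$, so $K$ is a single $\Ll$-class and hence left simple.

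The only step beyond mechanical assembly of the quoted lemmas is the last one, where the partition identity is propagated from idempotents to arbitrary elements of $K$. This is precisely where the completely simple structure of the kernel enters, and I expect it to be the only point requiring explicit appeal to the Rees structure theorem; everything else is a direct application of Prop.~\ref{lem-R-im}, Lemma~\ref{lem-preserve}, and Lemma~\ref{lem-preserve-0}.
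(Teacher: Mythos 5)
Your proposal is correct and takes essentially the same route as the paper: the $(2)\Rightarrow(1)$ direction coincides with the paper's proof verbatim (the kernel is an ideal, so $ps$ lies in the unique minimal left ideal together with $p$, whence $\Pp_{ps}=\Pp_p$ and Lemma~\ref{lem-preserve} applies). For $(1)\Rightarrow(2)$ the paper gets there slightly more directly via Lemma~\ref{lem-preserve-1} ($p$ preserving $\Pp_q$ forces $qp=q$, so $q$ lies in the minimal left ideal generated by $p$, and every minimal left ideal contains an idempotent), whereas you route through Lemma~\ref{lem-preserve-0} together with the equality of ranks in the kernel and the fact that each $f\in\ker S$ is $\Ll$-related to $f^0$; both arguments are sound, yours merely leaning a little more explicitly on finiteness.
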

\begin{proof} $1\Rightarrow 2$. Let $q\in S$ be a minimal idempotent such that all elements of $S$ preserve $\Pp_q$. Let $p\in S$ be another minimal idempotent.
By assumption $p$ preserves $\Pp_q$. By Lemma~\ref{lem-preserve-1} we have $qp=q$. Hence $q$ lies in the minimal left ideal generated by $p$. Since all idempotents of a minimal left ideal generate the same left ideal, therefore $S$ has a unique minimal left ideal.

$2\Rightarrow 1$. Let $p$ be an idempotent in the unique minimal left ideal $L$. Let $s\in S$. Then $ps$ lies in the kernel of $S$ which coincides with $L$. By Lemma~\ref{lem-partition1}, $\Pp_{ps}=\Pp_p$. By Lemma~\ref{lem-preserve}, $s$ preserves $\Pp_p$. 
\end{proof}

\begin{proof}[Proof of Theorem~\ref{thm-main-bijective-1}] 
Suppose first that $\Ss_\theta$ contains a unique minimal left ideal. 
By Lemma~\ref{lem-Ll} all its elements preserve a partition $\Bb = \{ A_1,\cdots, A_k\}$ defined by any element of that ideal. Note that $k$ must be the na\"{i}ve column number $c$. Define the length-$\ell$ substitution $\eta$ on the alphabet $\Bb$ 
through $\eta=\eta_0|\cdots|\eta_{\ell-1}$ where
$$\eta^{-1}_m := \left.\theta^{-1}_m\right|_{\Bb} \, ,$$
in other words, $\eta_m(A_i)$ is the unique $A_j$ which contains $\theta_m(a_i)$ for some $a_i\in A_i$. This is well-defined as $\theta_m$ preserves the partition $\Bb$.
By Lemma~\ref{lem-bij} $\eta$ is bijective. 
The code $\beta$ is given by $a\mapsto A_i$ for all $a\in A_i$.

To prove the converse of the statement, 
suppose now there is a bijective substitution $\eta=\eta_0|\cdots|\eta_{\ell-1}$ on a $c$-letter alphabet $\Bb$ and a map $\beta:\Aa\to \Bb$ such that  
$$\eta_m\circ \beta = \beta\circ\theta_m$$
for all $m=0,\cdots,\ell-1$. Since $\eta$ is bijective this implies that the 
partition $\Pp_\beta$ defined by $\beta$ must be the same as that defined by each $\beta\circ \theta_m$ and then also the same as that defined by any $\beta\circ f$ for any $f\in S_\theta$. By Lemma~\ref{lem-preserve} all elements of $S_\theta$ must preserve $\Pp_\beta$. 
Let $p$ be a minimal idempotent of $S_\theta$. By definition of the na\"{i}ve column rank its rank is $c$, which is also the rank of $\beta$. Lemma~\ref{lem-preserve-0} implies therefore that $\Pp_p=\Pp_\beta$. Now, we conclude with Lemma~\ref{lem-Ll} that $\Ss_\theta$ has a unique minimal left ideal. 
\end{proof}

\begin{proof}[Proof of Theorem~\ref{thm-main-bijective-2-a.e.}]
Primitivity of $\eta$ follows directly from the primitivity of $\theta$. 

We show that $\eta$ is aperiodic. We may assume that $\eta_0=\id$ (otherwise, we take a power of $\eta$). Assume, by contradiction, that $h$ is the period of $\eta$, i.e.\ all $x\in X_\eta$ satisfy $x_{n+h}=x_n$. This is only possible if $\eta^N_{kh} = \eta_0$ for all $k\geq 0$ and $N$ large enough such that \ $kh< \ell^N$.  
Furthermore, $\frac1{h}$ must be an eigenvalue of the dynamical system defined by $\eta$ and therefore also an eigenvalue of the dynamical system defined by $\theta$.  As $\theta$ has trivial height this means that  $h$ must divide $\ell^N$ for some $N$. However, as $\eta^N_{kh} = \eta_0=\id$ we then have, for all $0\leq k<\ell$
$$\id={\eta^{N+1}}_{k\ell^N} = {\eta^N}_0\eta_k = \eta_k$$
which 
contradicts the primitivity of $\eta$, as $c>1$. 

We show that the factor map $F_\beta$ induced by $\beta$ is almost injective. The common maximal equicontinuous factor is $\Z_\ell$, as the height of $\theta$ is trivial. The regular fibres of $\theta$ have the same size as those of $\eta$. 
The factor map induced by $\eta$ maps fibres surjectively onto fibres. Also, it  must map at least one regular fibre to a regular fibre.  This is because  these fibres are a set of Haar probability measure one in $\Z_\ell$. Thus the  restriction  of $F_\beta$ to that fibre must be bijective.  
\end{proof}

\noindent
{\bf Remark.} The following shows that our characterisation of when a substitution allows for an inner encoded bijective substitution is stable under taking powers. 
  \begin{lem}\label{lem:semigroup-power}  Let $\theta$ be a constant length
    substitution.
 Then $S_{\theta^n}$ has a unique minimal left ideal if and only if $S_\theta$ has a unique minimal left ideal.
  \end{lem}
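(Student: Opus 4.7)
The plan is to reduce the equivalence to a statement about the common kernel of the two semigroups. By Lemma~\ref{lem-cover-power} we have $\ker S_{\theta^n} = \ker S_\theta$; denote this common kernel by $K$. I would then show that the collection of minimal left ideals of a finite semigroup $S$ with kernel $K$ depends only on $K$ as an abstract semigroup, not on the ambient $S$. Applied to $S_\theta$ and $S_{\theta^n}$, this gives the equivalence immediately.

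The key algebraic identity is that $Se = Ke$ for every idempotent $e \in K$. Indeed, $Ke \subset Se$ is trivial, and conversely $Se \subset SK \subset K$ because $K$ is a bilateral ideal of $S$; hence $Se = Se\cdot e \subset K e = Ke$, using $e^2=e$. Combined with the standard fact that in a finite semigroup the kernel is the disjoint union of the minimal left ideals and that each such ideal is of the form $Ke$ for an idempotent $e\in K$ (see the Rees structure discussion after Lemma~\ref{lem-partition2}), this shows that the minimal left ideals of $S$ coincide, as subsets of $K$, with the minimal left ideals of the semigroup $K$ itself. Minimality is preserved in either direction: any left ideal of $S$ contained in $K$ is a left ideal of $K$ (since $KL\subset SL\subset L$), and conversely $Ke$ is minimal as a left ideal of $S$ because any $S$-left ideal $L\subset Se=Ke$ is also a $K$-left ideal of $Ke$, forcing $L=Ke$ by minimality in $K$.

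Applying this general principle to $S_\theta$ and $S_{\theta^n}$, which by Lemma~\ref{lem-cover-power} share the same kernel $K$, we conclude that their collections of minimal left ideals are identical. In particular, $S_\theta$ has a unique minimal left ideal if and only if $K$ does, if and only if $S_{\theta^n}$ does.

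No serious obstacle is expected: once Lemma~\ref{lem-cover-power} is in hand, the argument is a short exercise in finite semigroup theory, hinging on the identity $Se=Ke$ and on the standard description of the kernel as the union of its minimal left ideals.
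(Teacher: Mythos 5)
Your proof is correct, but it takes a genuinely different route from the paper's. The paper handles the two implications separately: for one direction it observes that two disjoint minimal left ideals $L_1,L_2$ of $S_\theta$ intersect $S_{\theta^n}$ in two disjoint minimal left ideals of $S_{\theta^n}$; for the other it invokes Theorem~\ref{thm-main-bijective-1}, translating ``unique minimal left ideal'' into the existence of a bijective inner encoding $(\eta,\beta)$ of $\theta$ and then noting that $(\eta^n,\beta)$ is a bijective inner encoding of $\theta^n$. You instead reduce everything to the common kernel: by Lemma~\ref{lem-cover-power}, $\ker S_{\theta^n}=\ker S_\theta=:K$, and your identity $Se=Ke$ for idempotents $e\in K$ (which is correct: $Ke\subset Se$ trivially, and $Se\subset SK\subset K$ gives $Se=Se\cdot e\subset Ke$), together with the standard facts that minimal left ideals of a finite semigroup lie in the kernel and are generated by idempotents, shows that the minimal left ideals of $S_\theta$ and of $S_{\theta^n}$ are \emph{literally the same subsets of $K$}, namely the minimal left ideals of the completely simple semigroup $K$. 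This is a stronger and more symmetric conclusion than the lemma asks for, it treats both implications at once, and it avoids the detour through the substitution-theoretic Theorem~\ref{thm-main-bijective-1}. What the paper's argument buys in exchange is independence from Lemma~\ref{lem-cover-power}: its first direction is an elementary intersection argument and its second recycles machinery already established, whereas your proof stands or falls with the kernel equality $\ker S_{\theta^n}=\ker S_\theta$, which you correctly cite but do not reprove.
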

 \begin{proof}
 We have $S_{\theta^n} \subset S_\theta$, and also, since $\theta$ and $\theta^n$ have the same column number $c$, $S_{\theta^n}^{(c)} \subset S_\theta^{(c)}$.
 If $S_\theta$ has two (disjoint) minimal left ideals $L_1$ and $L_2$, then $L_1\cap S_{\theta^n}$ and $L_2\cap S_{\theta^n}$ are disjoint minimal left ideals of $S_{\theta^n}$. This shows the direction $\Rightarrow$. The other direction is actually a consequence of Theorem~\ref{thm-main-bijective-1}. If $S_\theta$ has a unique minimal left ideal then there is a bijective substitution $\eta$ and a code $\beta$ such that $\beta\circ\theta_m = \eta_m\circ\beta$. This implies $\beta\circ{\theta^n}_m = {\eta^n}_m\circ\beta$ for any $n$  and any $m \leq \ell^{n}-1$. Hence $\eta^n$ is a bijective substitution which is an inner encoding of $\theta^n$. Clearly $\eta^n$ is bijective and hence by Theorem~\ref{thm-main-bijective-1}, $S_{\theta^n}$ must have a unique minimal left ideal.
 \end{proof}

 \begin{example}
Bijective substitution shifts have a single irregular fibre modulo $\mathbb{Z}$, with respect to  their maximal equicontinuous factor.
              Thus,  it is natural to ask if a substitutive subshift with a bijective factor has only finitely many singular fibres. This is false.
                Consider
              \begin{align*}
                             \theta\colon 0 &\mapsto 021 \\
                             1 &\mapsto 130 \\
                             2 &\mapsto 201 \\
                             3 &\mapsto 310
              \end{align*}
              which is easily checked to be primitive, aperiodic and of height $1$.
              Let $\pi$ be the code
              \begin{align*}
                             \pi\colon 0,2 &\mapsto a\\
                             1,3 &\mapsto b 
                                          \end{align*}
                        It is not hard to see that    $(\eta, \pi)$ is an inner encoding of $X_\theta$ 
                                             where $\eta$ is the bijective, aperiodic substitution given by:
              \begin{align*}
                         \eta\colon a &\mapsto aab \\
                             b & \mapsto bba.
              \end{align*}
             By Theorem  \ref{thm-main-bijective-1},  $S_\theta$ has a unique minimal left ideal. Indeed, all elements of this minimal left ideal have the common partition $\{ \{0,2\}, \{1,3\}\}$. Note that any element from $\mathbb{Z}_3$ which does not contain the digit $2$  is an irregular fibre. 
           \end{example}
 
 \subsection{Factors which preserve the fixed point fibre}

 In Theorem \ref{thm-main-bijective-1}, we characterised the substitution shifts which admit a bijective inner encoding. In this section, we extend this to characterising substitution shifts which admit a bijective substitution shift as a factor via a factor map which preserves the fixed point fibre.

 \begin{lem}\label{lem:semigroup-collaring}
 Let $n=(-l,r)$. If 
 $S_{\theta^{(n)}}$ has a unique minimal left ideal then $S_\theta$ has a unique minimal left ideal.
  \end{lem}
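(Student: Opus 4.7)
The plan is to build a surjective semigroup homomorphism $\Phi\colon S_{\theta^{(n)}} \to S_\theta$ and then invoke Lemma~\ref{lem-epi-ker}.

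The key ingredient is Lemma~\ref{lem:right-kappa-value}: the projection $\imath\colon \Aa_{\theta^{(n)}}\to\Aa_\theta$ defined by $\imath(a_{-l}\ldots a_r)=a_0$ is a surjective code realising $(\theta,\imath)$ as an inner encoding of $\theta^{(n)}$. Hence $\imath \circ \theta^{(n)}_m = \theta_m \circ \imath$ for all $0 \le m < \ell$. Iterating this intertwining relation along any word $g = \theta^{(n)}_{m_1}\cdots\theta^{(n)}_{m_k}$ in the generators yields $\imath \circ g = \Phi(g)\circ \imath$, where I set $\Phi(g):=\theta_{m_1}\cdots\theta_{m_k}$.

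To promote this from a map on words to a well-defined map on the semigroup $S_{\theta^{(n)}}$, I would use surjectivity of $\imath$ to right-cancel: if two products of generators represent the same element of $S_{\theta^{(n)}}$, then their images precompose with $\imath$ to the same map, and right-cancelling $\imath$ forces their images in $S_\theta$ to coincide. Thus $\Phi$ descends to a well-defined semigroup homomorphism, and it is surjective because the $\theta_m$ generate $S_\theta$.

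With $\Phi$ in hand, the final step is a direct application of the second assertion of Lemma~\ref{lem-epi-ker}: since $S_{\theta^{(n)}}$ has a unique minimal left ideal, the epimorphism transports this property to $S_\theta$. I do not anticipate any substantive obstacle; the only point requiring care is the well-definedness of $\Phi$, which is settled as above by the surjectivity of $\imath$.
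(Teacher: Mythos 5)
Your proof is correct and follows essentially the same route as the paper: the paper also observes that, since $(\theta,\imath)$ is an inner encoding of $\theta^{(n)}$, the semigroup $S_\theta$ is a homomorphic image of $S_{\theta^{(n)}}$ (via $f\mapsto \imath f\imath^{-1}$, which agrees with your $\Phi$), and then applies Lemma~\ref{lem-epi-ker}. Your explicit verification of well-definedness by right-cancelling the surjection $\imath$ is a sound way to fill in the detail the paper leaves implicit.
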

\begin{proof}  
As  $\theta$ is an inner encoding of the collared substitution $\theta^{(n)}$,  the semigroup of the collared substitution $S_\theta$ is a homomorphic image of  $S_{\theta^{(n)}}$.         Now Lemma~\ref{lem-epi-ker} implies the result.

\end{proof}

\begin{thm}\label{thm-main-bijective-2}
Let $\theta$ be a primitive aperiodic length-$\ell$ substitution with na\"{i}ve column number $c$.
The following are equivalent.
\begin{enumerate}
\item $(X_\theta, \sigma)$ factors onto a bijective substitution shift $(X_\eta,\sigma)$ via a  factor map $F$  which preserves the fixed point fibre.
\item 
$S_\theta$ has a unique minimal left ideal.
\end{enumerate}
\end{thm}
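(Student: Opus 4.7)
The plan: For the easy implication (2)$\Rightarrow$(1), I would invoke Theorem~\ref{thm-main-bijective-1} to obtain a bijective inner encoding $(\eta,\beta)$ of $\theta$ on a $c$-letter alphabet; the code $\beta$ then induces a factor map $F_\beta\colon X_\theta\to X_\eta$, which by Lemma~\ref{lem:code-factor-kappa} (using that $\theta$ is primitive) preserves the fixed point fibre. This step is essentially immediate.

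For the converse (1)$\Rightarrow$(2), I would first dispose of the trivial case $c=1$ using Lemma~\ref{lem-cn-1}, which says $\ker S_\theta$ is a left zero semigroup and hence left simple, so (2) holds automatically. Thus assume $c>1$, and let $F\colon X_\theta\to X_\eta$ be a factor map preserving fixed point fibres with $\eta$ a (nontrivial) bijective substitution. Using Proposition~\ref{prop:radius-1} (which tacitly requires trivial height), the plan is to factor $F = F_\tau\circ F_\iota^{-1}$, where $F_\iota\colon X_{\theta^{(n)}}\to X_\theta$ is the canonical conjugacy coming from a collaring with $n=(-l,r)$, $l,r\le 1$, and $\tau\colon \Aa_{\theta^{(n)}}\to\Aa_\eta$ is a code. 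Since $F_\iota^{-1}$ preserves fixed point fibres (via Lemma~\ref{lem:right-kappa-value} and Lemma~\ref{lem:code-factor-kappa}), so does $F_\tau$. Primitivity of $\theta^{(n)}$ together with the converse part of Lemma~\ref{lem:code-factor-kappa} then forces $(\eta,\tau)$ to be an inner encoding of $\theta^{(n)}$.

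The hard part will be matching the alphabet sizes so that Theorem~\ref{thm-main-bijective-1} applies: that theorem characterises unique minimal left ideals via bijective inner encodings \emph{on $c$-letter alphabets}, so I must verify $|\Aa_\eta|=c$. For this I would invoke Theorem~\ref{thm-main-bijective-2-a.e.}, which, applied to the bijective inner encoding $(\eta,\tau)$ of the trivial-height, primitive, aperiodic substitution $\theta^{(n)}$, yields that the induced factor $F_\tau$ is almost injective. Almost injectivity equates the cardinalities of a regular fibre in the two shifts over their common maximal equicontinuous factor: in $X_{\theta^{(n)}}$ this size is $c$, while in $X_\eta$ it is $|\Aa_\eta|$ (since $\eta$ is bijective), so $|\Aa_\eta|=c$. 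Applying Theorem~\ref{thm-main-bijective-1} to $(\eta,\tau)$ shows that $S_{\theta^{(n)}}$ has a unique minimal left ideal, and finally Lemma~\ref{lem:semigroup-collaring} transfers this property to $S_\theta$.
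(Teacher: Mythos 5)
Your proposal follows essentially the same route as the paper's proof: decompose $F$ as $F_\tau\circ F_\imath^{-1}$ via Proposition~\ref{prop:radius-1}, deduce that $F_\tau$ preserves the fixed point fibre and hence that $(\eta,\tau)$ is an inner encoding of $\theta^{(n)}$ by Lemma~\ref{lem:code-factor-kappa}, apply Theorem~\ref{thm-main-bijective-1}, and transfer back to $S_\theta$ with Lemma~\ref{lem:semigroup-collaring}; the converse is Theorem~\ref{thm-main-bijective-1} plus Lemma~\ref{lem:code-factor-kappa} in both treatments. Your one genuine addition is the verification, via the almost-injectivity statement of Theorem~\ref{thm-main-bijective-2-a.e.}, that $|\Aa_\eta|=c$ before invoking Theorem~\ref{thm-main-bijective-1} --- the paper applies that theorem without checking the alphabet size, so this extra step (together with your explicit handling of $c=1$) tightens a point the paper leaves implicit. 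Note that both your argument and the paper's rely on Proposition~\ref{prop:radius-1} (and, in your case, also on Theorem~\ref{thm-main-bijective-2-a.e.}), which assume trivial height, an hypothesis absent from the statement of Theorem~\ref{thm-main-bijective-2}; you flag this but, like the paper, do not resolve it.
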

\begin{proof}
First suppose that $F:X_\theta\rightarrow X_\eta$ is a factor map which  preserves the fixed point fibre and with $\eta$ bijective.
By Proposition \ref{prop:radius-1} there is an $n=(-l,r)$, with $0\leq l,r\leq 1$ such that $(X_\eta, \sigma)$ is a factor of $(X_{\theta^{(n)}    },\sigma)$ via  a radius zero factor map $F_\tau:    X_{\theta^{(n)}    }\rightarrow   X_\eta      $ where  $F = F_\tau\circ F_{\imath}^{-1}$. 
Now by Lemma \ref{lem:right-kappa-value}, $F_{\imath}$ preserves the fixed point fibre, and by assumption, so does $F$. Hence $F_\tau$ preserves the fixed point fibre. Finally by Lemma \ref{lem:code-factor-kappa} $\eta$ is an inner encoding of $\theta^{(n)}$.
By Theorem~\ref{thm-main-bijective-1}, $S_{\theta^{(n)}}$  has a unique minimal left ideal. 
By Lemma~\ref{lem:semigroup-collaring}, $S_\theta$ has a unique minimal left ideal. 

The converse follows directly from Theorem~\ref{thm-main-bijective-1}. 
\end{proof}
  We remark that Theorems~\ref {thm-main-bijective-1} and \ref{thm-main-bijective-2} imply that all factor maps to a bijective substitution which fix the fixed point fibre must  have radius zero.

%
%

\subsection{Bijective factors which do not preserve the fixed point fibre}

In this section we extend Theorem \ref{thm-main-bijective-2} to  substitutions which have a bijective substitution factor  via a factor map
which does not send fixed points to fixed points. 
 To do this we need to recall a little more information on the arithmetic information that factor maps encode. 
\subsection{Factors of substitution shifts and their $\kappa$-values}

We describe factor maps $F$ of substitution shifts by associating to each of them an  $\ell$-adic integer which describes how $F$ translates fibres of an equicontinuous factor. This element of $\Z_\ell$ is determined by where $F$ sends fixed points.

Let $X$, $Y$ be minimal shift spaces with a  common equicontinuous factor  $ (\mathcal G,R)$. Let $\pi_X :X\rightarrow \mathcal G$ and $  \pi_Y :Y\rightarrow \mathcal G  $ be  corresponding equicontinuous factor maps.
 Let $\Fac(X,Y)$ be the collection of factor maps from $(X,\sigma)$ to $(Y,\sigma)$, and let 
 $\Conj(X,Y)$ be the set of injective  factor maps from $(X,\sigma)$ to $(Y,\sigma)$.

 \begin{thm}\label{thm:ethan}  \cite[Theorem 3.3]{coven-quas-yassawi}.
Let  $(X,\sigma)$  and $(Y,\sigma)$ be  infinite minimal 
shifts.
Suppose that the group rotation $(\mathcal G,R)$ is the maximal 
equicontinuous factor of both $(X,\sigma)$ and $(Y,\sigma)$ and fix the factor maps
$\pi_X:X\rightarrow \mathcal G$ and $\pi_Y:Y\rightarrow \mathcal G$.
Then there is a map $\kappa:  \mbox{Fac}(X,Y) \rightarrow \mathcal G$
such that \[\pi_Y(F(x))=\kappa(F)+\pi_X(x) \] for all 
$x\in X$ and $F  \in \mbox{Fac}(X,Y)$. Also 
\begin{enumerate}
\item
 if $(Z,\sigma)$ is another 
shift which satisfies the assumptions on $(X,\sigma)$, then
$\kappa(G\circ F)=\kappa(G)+\kappa(F)$ for 
$F \in \mbox{Fac} (X,Y)$,  $G \in
\mbox{Fac}(Y,Z)$, and 
\item if $\min_{g\in \mathcal G}|\pi_X^{-1}(g)|=\min_{g\in \mathcal G}|\pi_Y^{-1}(g)|=c<\infty$,
then
 for each $F\in \mbox{Fac}(X,Y)$, we have  
\[ \{ z \in \Z_{r}: |\pi_Y^{-1}(z)|>c \}\subset
\{z \in \Z_{r}: |\pi_X^{-1}(z)|>c \} + \kappa(F),
\]
and  
$\kappa$ is at most  $c$-to-one.
\end{enumerate}
\end{thm}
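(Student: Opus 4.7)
The plan is to extract $\kappa(F)$ from the universal property of the maximal equicontinuous factor. First I would observe that for any $F\in\Fac(X,Y)$, the composition $\pi_Y\circ F:X\to\mathcal G$ is a continuous shift-equivariant surjection, so $(\mathcal G,R)$ is an equicontinuous factor of $(X,\sigma)$ through $\pi_Y\circ F$. Since $\pi_X$ is the maximal equicontinuous factor map of $(X,\sigma)$, the universal property yields a continuous $\phi:\mathcal G\to\mathcal G$ with $\pi_Y\circ F=\phi\circ\pi_X$, and the chain $\phi(R\pi_X(x))=\phi(\pi_X(\sigma x))=\pi_Y(\sigma F(x))=R\pi_Y(F(x))=R\phi(\pi_X(x))$ combined with surjectivity of $\pi_X$ gives $\phi\circ R=R\circ\phi$. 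The heart of the argument is then the lemma that any continuous self-map $\phi$ of the minimal rotation $(\mathcal G,R)$ commuting with $R$ must be a translation: writing $R(g)=g+r$, the identity $\phi(g+r)=\phi(g)+r$ iterates to $\phi(nr)=\phi(0)+nr$, and density of the $R$-orbit of $0$ together with continuity extends this to $\phi(g)=\phi(0)+g$ on all of $\mathcal G$. Setting $\kappa(F):=\phi(0)$ then yields the defining identity $\pi_Y(F(x))=\kappa(F)+\pi_X(x)$.

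Part (1) is then a one-line computation: $\pi_Z(G(F(x)))=\kappa(G)+\pi_Y(F(x))=\kappa(G)+\kappa(F)+\pi_X(x)$, and uniqueness of the translation associated with $G\circ F$ forces $\kappa(G\circ F)=\kappa(G)+\kappa(F)$. For the fiber inclusion in part (2), I would read off from the defining identity that $F(\pi_X^{-1}(z-\kappa(F)))\subseteq\pi_Y^{-1}(z)$, with equality because given $y\in\pi_Y^{-1}(z)$, surjectivity of $F$ produces $x\in F^{-1}(y)$, and then $\pi_X(x)=\pi_Y(y)-\kappa(F)=z-\kappa(F)$. Hence $|\pi_X^{-1}(z-\kappa(F))|\geq|\pi_Y^{-1}(z)|$, so $|\pi_Y^{-1}(z)|>c$ implies $|\pi_X^{-1}(z-\kappa(F))|>c$, and shifting by $\kappa(F)$ gives the claimed inclusion of irregular-fiber sets.

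For the $c$-to-one bound, I would suppose $F_1,\dots,F_k$ are distinct factor maps with common $\kappa$-value $g$. Picking a regular fiber $\pi_Y^{-1}(z)$ of $Y$, of size exactly $c$, and any $x_0\in\pi_X^{-1}(z-g)$ (nonempty since $\pi_X$ is onto), each $F_i(x_0)$ lies in $\pi_Y^{-1}(z)$; but minimality of $(X,\sigma)$ and shift-equivariance force a factor map to $Y$ to be determined by its value at a single point, so the $F_i(x_0)$ are pairwise distinct and $k\leq|\pi_Y^{-1}(z)|=c$. The hard part will be the translation lemma in the first paragraph: it is precisely where minimality of $(\mathcal G,R)$ and the compact group structure of $\mathcal G$ are both essential, while the rest of the argument is bookkeeping around surjectivity, fibers, and the universal property of the maximal equicontinuous factor.
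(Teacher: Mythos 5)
Your proof is correct. Note that the paper does not actually prove this statement --- it is imported verbatim from Coven--Quas--Yassawi \cite[Theorem 3.3]{coven-quas-yassawi} --- so the comparison is with that source rather than with an argument in this text. Your route (push $\pi_Y\circ F$ through the universal property of the maximal equicontinuous factor to get a continuous $\phi:\mathcal G\to\mathcal G$ commuting with $R$, then prove the translation lemma via density of the $R$-orbit of $0$) is sound, but it invokes more machinery than is needed: the remark immediately following the theorem in the paper indicates the more direct path, namely that $x\mapsto \pi_Y(F(x))-\pi_X(x)$ is a continuous $\mathcal G$-valued function on $X$ which is $\sigma$-invariant (since both $\pi_Y\circ F$ and $\pi_X$ intertwine $\sigma$ with the same rotation $R$), hence constant on a dense orbit and therefore constant by minimality of $(X,\sigma)$. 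This one-line observation is exactly your translation lemma in disguise and avoids appealing to the universal property altogether. Your treatments of (1), of the irregular-fibre inclusion (via $F$ mapping $\pi_X^{-1}(z-\kappa(F))$ \emph{onto} $\pi_Y^{-1}(z)$), and of the $c$-to-one bound (distinct factor maps with equal $\kappa$-value take distinct values at any point over a minimal fibre, since a shift-commuting continuous map out of a minimal system is determined by its value at one point) are all complete and match the standard argument.
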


 We call $\kappa(F)$ the {\em $\kappa$-value of $F$}.
 In fact $\kappa: \Fac(X,Y)\rightarrow \mathcal G$  is defined by $\kappa (F):= \pi_Y(F( x))- \pi_X(x)$, where this quantity does not depend on $x$ for minimal systems. It is important to note that the $\kappa$-value of a map depends on the choice of maximal equicontinuous factor maps.

If $F:X_\theta \rightarrow X_\eta$ is a factor map between two length-$\ell$ substitution shifts, then  $\kappa(F)=0$ if and only if $\theta$-fixed points are mapped to $\eta$-fixed points, i.e., if 
$F$
 preserves the fixed point fibre.

The following tells us   that 
$\kappa$-values of conjugacies between two length-$\ell$ shifts are constrained. {We say that $z\in \Z_\ell $ is {\em rational} if it is eventually periodic. This naming follows from the fact that if $z\in \Z_\ell$ is eventually periodic, then it is the $\ell$-adic expansion of a rational number.} 
\begin{prop}\label{prop:upper_bound_conjugacy} 
Let $\theta$  and $\theta'$ be  primitive, aperiodic length-$\ell$ substitutions, 
with $\theta$ of trivial height. If $F\in \mbox{Fac}(X_\theta,X_{\theta'})$, 
then  
$\kappa (F)$ is  rational. 
\end{prop}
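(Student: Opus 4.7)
The plan is first to use Proposition~\ref{prop:radius-1} to reduce $F$ to a radius-zero code, then to read off the $\ell$-adic digits of $\kappa(F)$ via $\theta'$-recognisability applied to $F(u)$ for a $\theta$-fixed point $u$, and finally to force those digits to be eventually periodic via a pigeonhole on an auxiliary family of factor maps.

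By Proposition~\ref{prop:radius-1}, $F=F_\tau\circ F_\imath^{-1}$ with $F_\imath^{-1}$ preserving the fixed-point fibre (so $\kappa(F_\imath^{-1})=0$) and $F_\tau$ a letter-to-letter code; additivity of $\kappa$ gives $\kappa(F)=\kappa(F_\tau)$, so after passing to a collared version of $\theta$ I may assume $F$ itself is a code $\tau\colon\Aa_\theta\to\Aa_{\theta'}$, and $\kappa(F)=\pi_{\theta'}(F(u))$ for $u$ a $\theta$-fixed point. Moss\'e's recognisability theorem for $\theta'$ then provides, for each $n\ge 1$, unique $a_n\in[0,\ell^n)$ and $y^{(n)}\in X_{\theta'}$ with
\[F(u)=\sigma^{a_n}(\theta')^n(y^{(n)}),\]
so that $\kappa(F)=a_n+\ell^n\pi_{\theta'}(y^{(n)})$; rationality of $\kappa(F)$ in $\Z_\ell$ is then equivalent to eventual periodicity of the digit sequence $k_n=(a_{n+1}-a_n)/\ell^n\in\{0,\dots,\ell-1\}$.

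For each $n$ the assignment $x\mapsto y^{(n)}(x)$ (the level-$n$ $\theta'^n$-ancestor of $F(x)$) only intertwines $\sigma^{\ell^n}$ on $X_\theta$ with $\sigma$ on $X_{\theta'}$; after the obvious integer-shift adjustment $\sigma^{c_n}$ this yields a bona fide element $F^{(n)}\in\Fac(X_\theta,X_{\theta'})$ with $\kappa(F^{(n)})\equiv(\kappa(F)-a_n)/\ell^n \pmod{\Z}$. The crucial step is then the claim that, up to pre-composition with $\sigma^{\Z}$, the family $\{F^{(n)}\}_{n\ge 0}$ realises only finitely many distinct elements of $\Fac(X_\theta,X_{\theta'})/\sigma^{\Z}$. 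Granting this, pigeonhole yields $n_1<n_2$ and $c\in\Z$ with $F^{(n_1)}=\sigma^c\circ F^{(n_2)}$, which translates to $(\kappa(F)-a_{n_1})/\ell^{n_1}-(\kappa(F)-a_{n_2})/\ell^{n_2}=c$; solving gives
\[\kappa(F)=\frac{a_{n_1}\,\ell^{n_2-n_1}-a_{n_2}+c\,\ell^{n_2}}{\ell^{n_2-n_1}-1},\]
a rational number whose denominator $\ell^{n_2-n_1}-1$ is coprime to $\ell$ and thus a unit in $\Z_\ell$, so $\kappa(F)\in\mathbb{Q}\cap\Z_\ell$ is rational, as claimed.

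The main obstacle is the finiteness of $\{F^{(n)}\}$ modulo $\sigma^{\Z}$: the natural radius of $F^{(n)}$ grows linearly in $n$, so one must arrange the integer shifts $c_n$ carefully so that the resulting family reduces, via a second application of Proposition~\ref{prop:radius-1} combined with the primitivity and aperiodicity of $\theta$ and $\theta'$, to a uniformly bounded-radius (hence finite) collection of codes on a fixed collared alphabet.
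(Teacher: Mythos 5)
Your route is genuinely different from the paper's. The paper reduces the general factor map to a conjugacy: after Proposition~\ref{prop:radius-1} turns $F$ into a code $\tau$, Theorem~\ref{th:factor-bijective} factors $F_\tau$ as $F_{\tau'}\circ F_{\beta_\tau}$ with $F_{\beta_\tau}$ an inner encoding (hence $\kappa=0$ by Lemma~\ref{lem:code-factor-kappa}) and $F_{\tau'}$ a \emph{conjugacy}, to which the already-known rationality result \cite[Proposition 3.24]{coven-quas-yassawi} applies. You instead re-derive that rationality from scratch by desubstitution and pigeonhole. That is a legitimate and more self-contained strategy (it is essentially how the cited result is proved), and it has the virtue of making the mechanism visible; the paper's route is shorter because it outsources exactly this work.

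Two points in your argument need repair before it is a proof. First, the construction of $F^{(n)}$: a map intertwining $\sigma^{\ell^n}$ on $X_\theta$ with $\sigma$ on $X_{\theta'}$ cannot be converted into an element of $\Fac(X_\theta,X_{\theta'})$ by composing with a power of the shift --- post-composition with $\sigma^{c_n}$ still intertwines $\sigma^{\ell^n}$ with $\sigma$. The correct fix is to \emph{pre-compose with $\theta^n$} (which intertwines $\sigma$ with $\sigma^{\ell^n}$), i.e.\ to take $F^{(n)}=(\theta')^{-n}\circ F\circ\theta^n$ exactly as in the proof of Proposition~\ref{prop:radius-1}; since $\theta^n(u)=u$ for a fixed point $u$, the $\kappa$-value computation you want then goes through. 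Second, the finiteness of $\{F^{(n)}\}$ modulo $\sigma^{\Z}$ is the entire content of the proposition and cannot be discharged by ``a second application of Proposition~\ref{prop:radius-1}'': that proposition's radius bound is proved only for factor maps preserving the fixed point fibre, which the $F^{(n)}$ need not do (their $\kappa$-values are $(\kappa(F)-a_n)/\ell^n\bmod\Z$). What you actually need is a uniform radius bound for $(\theta')^{-n}\circ F\circ\theta^n$ obtained directly from Moss\'e recognisability: the recognisability window of $(\theta')^n$ grows like $\ell^n$ while $\theta^n$ contracts coordinates by $\ell^n$, so the two effects cancel and the radii stay bounded independently of $n$. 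With that lemma stated and proved, your pigeonhole computation of $\kappa(F)$ as a fraction with denominator $\ell^{n_2-n_1}-1$ (a unit in $\Z_\ell$) is correct. As written, however, the crucial step is asserted rather than proved, and the tool you invoke for it does not apply.
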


\begin{proof}

In the case where the factor map is a conjugacy, the statement is \cite[Proposition 3.24]{coven-quas-yassawi}.
We will suppose that $F=F_\tau$ where $\tau:\mathcal A_\theta\rightarrow \mathcal A_{\theta'}$, as otherwise the usual combination of Proposition \ref{prop:radius-1} and Theorem \ref{th:factor-bijective} allows us to reduce to this case.

 Theorem \ref{th:factor-bijective} tells us that there is a pair $(\eta, \beta)$ which is inner encoded by $\theta$, and a code $\tau':\mathcal A_{\eta}\rightarrow \mathcal A_{\theta'} $ such that 
$F_{\tau '}:X_\eta\rightarrow X_{\theta'}$ is a conjugacy.  Again by \cite[Proposition 3.24]{coven-quas-yassawi},
$\kappa (F_{\tau'}) $ is rational. But  also, $\tau = \tau'\circ \beta$, and since $(\theta', \beta)$ is inner encoded by $\theta$, then by Lemma \ref{lem:code-factor-kappa}, $\kappa (F_\beta)=0$.  Thus $\kappa (F_{\tau})  = \kappa (F_{\tau'}) $ is also rational.

\end{proof}

If $F:X_\theta\to X_\eta$ is a factor map onto a bijective substitution shift with $\kappa(F)=m\in \Z$, then $\tilde F:=\sigma^{-m}\circ F: X_\theta\to X_\eta$ is also a factor map onto a bijective substitution and, as  $\kappa(\tilde F)=0$, we can apply Theorem~\ref{thm-main-bijective-2} to conclude that $S_\theta$ has a unique left minimal ideal. In other words, Theorem~\ref{thm-main-bijective-2} extends verbatim to the case in which the kappa-value of the factor map is an integer.

The following example shows that there exist factor maps $F$ where  $\kappa(F)\not\in \Z$.
 \begin{example}\label{ex:positive-kappa-value}
Take
 the following two  substitutions:
\begin{align*}
     \eta \colon a & \mapsto abcba, & \theta\colon 0& \mapsto 35203\\
     b & \mapsto bcacb, & 1 &\mapsto     35214\\  
     c & \mapsto cabac, & 2 &\mapsto   41520 \\   
     & & 3 &\mapsto    41534 \\ 
     & & 4 &\mapsto 02140\\ 
     & & 5 &\mapsto 02153 
\end{align*} 
The words of length two in $\mathcal L_\eta$ are
\[   \{ab,ac,ba,bc,ca,cb\},\]
and by mapping $(ab),(ac),(ba),(bc),(ca),(cb)$ to $0,1,2,3,4,5$, respectively, 
it can be verified that $\theta=\eta^{(+1)}$. Thus $X_\theta$ factors bijectively onto the bijective substitution shift $X_\eta$.
However, it can be verified that $\Ss_{\theta}$ does {\em not} have a unique minimal left ideal.
Indeed, it has two minimal idempotents, one generated by the partition $\{ \{ 0,1\}, \{2,3 \},\{4,5 \}\}$, and the other generated by the partition  $\{ \{ 0,5\}, \{1,3 \},\{2,4 \}\}$; see Lemma \ref{lem-partition1}.
This does not contradict Theorem \ref{thm-main-bijective-2} because we will see that  the radius zero conjugacy $F_\tau:X_{\theta}\rightarrow X_\eta$, which is given by the  local rule $\tau$:
\begin{align*}
	\tau:0,1&\mapsto a, \\ 2,3&\mapsto b, \\ 4,5&\mapsto c
\end{align*}
is a factor map with nonzero $\kappa$-value. In particular,  Lemma \ref{lem:kappa values} will tell us that    $\kappa( F_\tau)= -1/4= -1/(\ell -1)$.
\end{example}

We now show how to  ``correct" such factor maps  to obtain factor maps with $\kappa$-value $0$.

Recall the $k$-shifted extension of $\theta$ from Definition \ref{def:shifting}.

\begin{lem}\label{lem:kappa values}
  Let $\theta : \mathcal{A} \rightarrow \mathcal{A}^{\ell}$ be an aperiodic primitive  length-$\ell$ substitution of trivial height, and let $\zeta:=\theta^{(+ k)}$ be its $k$-shifted extension,
   with $0 \leqslant k < \ell$.
  Then there is a conjugacy given by a code $F :
  X_{\zeta}  \rightarrow X_{\theta} $ and  $\kappa (F) = \frac{k}{
  1-\ell}$.
  \end{lem}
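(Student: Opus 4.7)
The plan is to exhibit the natural first-letter projection as the desired code and to compute its $\kappa$-value by means of an intertwining relation between the substitutions $\zeta$ and $\theta$.

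First I define $\tau\colon\Aa_\zeta\to\Aa$ by $\tau([a_0 a_1]):=a_0$, where $\Aa_\zeta$ is the alphabet of $\zeta=\theta^{(+k)}$, that is, the allowed $2$-letter words of $\theta$. To see that $F_\tau\colon X_\zeta\to X_\theta$ is a conjugacy, a short induction on the length of $\zeta$-images shows that every $\zeta$-allowed word has the overlap form $[a_0 a_1][a_1 a_2][a_2 a_3]\cdots$. Indeed, by Definition~\ref{def:shifting} one has $\zeta([a_0 a_1])=[a'_{k} a'_{k+1}][a'_{k+1} a'_{k+2}]\cdots[a'_{k+\ell-1} a'_{k+\ell}]$, so consecutive symbols within a single $\zeta$-image automatically overlap; and at a junction between $\zeta([a_{i-1} a_i])$ and $\zeta([a_i a_{i+1}])$ the two overlap letters both equal $\theta_k(a_i)$. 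Hence every $u\in X_\zeta$ is of the form $u_n=[x_n x_{n+1}]$ for a uniquely determined $x\in\Aa^\Z$, so $F_\tau$ is an injective sliding-block code; and since its image is a non-empty closed shift-invariant subset of the minimal shift $X_\theta$, it equals $X_\theta$.

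The heart of the argument is to establish the intertwining identity
\[
F_\tau\circ\zeta \;=\; \sigma^k\circ\theta\circ F_\tau .
\]
Writing $m=p\ell+q$ with $0\le q<\ell$ and $u_p=[x_p x_{p+1}]$, Definition~\ref{def:shifting} gives $\zeta(u)_m=\zeta_q(u_p)=[a'_{q+k}\,a'_{q+k+1}]$ where $\theta(x_p x_{p+1})=a'_0\cdots a'_{2\ell-1}$. Applying $\tau$ yields the letter $a'_{q+k}$; a case split on whether $q+k<\ell$ (so $a'_{q+k}=\theta_{q+k}(x_p)$) or $q+k\ge\ell$ (so $a'_{q+k}=\theta_{q+k-\ell}(x_{p+1})$) shows that in both cases this letter equals $\theta(x)_{m+k}=(\sigma^k\theta(x))_m$.

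Once the intertwining identity is in hand, $\kappa(F_\tau)$ is read off algebraically. By Theorem~\ref{thm:ethan} the quantity $\kappa(F_\tau)=\pi_\theta(F_\tau(u))-\pi_\zeta(u)$ is independent of $u\in X_\zeta$; I also use the standard identities $\pi_\theta\circ\sigma=\pi_\theta+1$, $\pi_\theta\circ\theta=\ell\cdot\pi_\theta$ and $\pi_\zeta\circ\zeta=\ell\cdot\pi_\zeta$ (all immediate from the fact that $\pi$ sends fixed points to $0$ and intertwines the shift with $+1$). Substituting $\zeta(u)$ for $u$ and applying the intertwining identity yields
\[
\kappa(F_\tau)
= \pi_\theta(\sigma^k\theta F_\tau(u))-\pi_\zeta(\zeta(u))
= k+\ell\,\pi_\theta(F_\tau(u))-\ell\,\pi_\zeta(u)
= k+\ell\,\kappa(F_\tau),
\]
so $(1-\ell)\,\kappa(F_\tau)=k$, giving $\kappa(F_\tau)=k/(1-\ell)$. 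The only delicate point is the bookkeeping in the case split that establishes the intertwining relation; the rest is routine.
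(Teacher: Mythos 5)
Your proof is correct and follows essentially the same route as the paper: both hinge on the intertwining identity $F\circ\zeta=\sigma^k\circ\theta\circ F$ (the paper writes it as $\zeta\circ F^{-1}=F^{-1}\circ\sigma^k\circ\theta$ and iterates it, evaluating at fixed points and taking an $\ell$-adic limit of $k\frac{1-\ell^n}{1-\ell}$, whereas you apply $\pi$ once and solve $(1-\ell)\kappa=k$ using $\pi\circ\theta=\ell\cdot\pi$ — the same computation in closed form). Your additional verification that $F_\tau$ is a bijective sliding-block code onto $X_\theta$ is a detail the paper leaves implicit in calling it ``the natural conjugacy.''
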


\begin{proof}
Let $F^{-1}:X_\theta \rightarrow X_\zeta$ be the natural conjugacy with right radius $1$ and left radius $0$, where the valid word $ab$ is mapped to the letter  $(a,b)$. Then iteration of 
\[ \zeta\circ F^{-1} = F^{-1} \circ \sigma^k\circ \theta \]
gives
\[ \zeta^n \circ F^{-1} = F^{-1} \circ (\sigma^k\circ \theta)^n= F^{-1}\circ \sigma^{k\frac{1-\ell^n}{1-\ell}} \circ \theta^n. \]
Applying this identity to a word of length $2$, and letting $n\rightarrow \infty$, we obtain, for some fixed points $u$ and $v$ of $\theta$ and $\zeta$ respectively, 
\[ v = F^{-1} \circ  \lim_{n\rightarrow \infty}  \sigma^{k\frac{1-\ell^n}{1-\ell}} (u), \]
so that $\kappa( F^{-1} ) = \frac{-k}{1-\ell}$. The result follows.
\end{proof}

Remember   that $F^{-1}$ is not necessarily a radius zero factor map.

\begin{cor}\label{cor:all kappa values}
Let $\theta : \mathcal{A} \rightarrow \mathcal{A}^{\ell}$ be an aperiodic primitive  length-$\ell$ substitution of trivial height,
   and let 
  $\frac{p}{q}\in \Z_\ell$ be irreducible. Then there exists
  some substitution $\zeta$ and a conjugacy
   $F : X_{\zeta} \rightarrow X_{\theta}$ that 
   satisfies $\kappa (F)= \frac{p}{q}$. 
   \end{cor}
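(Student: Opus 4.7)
The plan is to reduce to Lemma~\ref{lem:kappa values} by taking a suitable power of $\theta$ and composing with a shift. Since $\theta$ is primitive and aperiodic, $X_{\theta^n}=X_\theta$ and $\theta^n$ inherits primitivity, aperiodicity and trivial height, while having length $\ell^n$. Applying Lemma~\ref{lem:kappa values} to $\theta^n$ and any $0\leq k<\ell^n$ yields a substitution $\zeta:=(\theta^n)^{(+k)}$ together with a conjugacy $G:X_\zeta\to X_{\theta^n}=X_\theta$ whose $\kappa$-value equals $\frac{k}{1-\ell^n}$. Since $\kappa(\sigma)=1$ and, by Theorem~\ref{thm:ethan}(1), $\kappa$ is additive under composition, the map $F:=\sigma^m\circ G$ is a conjugacy $X_\zeta\to X_\theta$ with $\kappa(F)=m+\frac{k}{1-\ell^n}$. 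It therefore suffices to realise an arbitrary irreducible $\frac{p}{q}\in\Z_\ell$ in the form $m+\frac{k}{1-\ell^n}$ with $m\in\Z$, $n\geq 1$ and $0\leq k<\ell^n$.

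For this arithmetic step, recall that $\frac{p}{q}\in\Z_\ell$ forces $\gcd(q,\ell)=1$, so by the Euler--Fermat theorem there is some $n\geq 1$ with $\ell^n\equiv 1\pmod{q}$; fix such an $n$ and set $t:=\frac{\ell^n-1}{q}\in\mathbb N$, noting that $1-\ell^n=-qt$. Then choose the unique integer $m$ satisfying $0\leq mq-p<q$, and define $k:=(mq-p)t$. This ensures $0\leq k<qt=\ell^n-1<\ell^n$, and a direct computation gives
\[
m+\frac{k}{1-\ell^n}\;=\;m+\frac{(mq-p)t}{-qt}\;=\;m-\frac{mq-p}{q}\;=\;\frac{p}{q}.
\]

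Combining the two steps: with $n,m,k$ chosen as above, take $\zeta=(\theta^n)^{(+k)}$ and $F=\sigma^m\circ G$, where $G:X_\zeta\to X_\theta$ is the conjugacy from Lemma~\ref{lem:kappa values} applied to $\theta^n$. Then $F$ is a conjugacy with $\kappa(F)=\frac{p}{q}$, as required. There is no serious obstacle here; the only nontrivial ingredient is Lemma~\ref{lem:kappa values} itself (already established) together with the elementary congruence argument, and the special case $\frac{p}{q}\in\Z$ reduces transparently to $k=0$, $n=1$, $F=\sigma^{p/q}\circ G$ with $G:X_{\theta^{(0,1)}}\to X_\theta$ the canonical radius-one conjugacy.
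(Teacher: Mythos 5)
Your proposal is correct and follows essentially the same route as the paper: apply Lemma~\ref{lem:kappa values} to a power $\theta^n$ with $\ell^n\equiv 1\pmod q$ (the paper takes $n=\varphi(q)$) to realise $\kappa$-values of the form $\tfrac{k}{1-\ell^n}$, then compose with a power of $\sigma$ to absorb the integer part. The only difference is cosmetic — you solve for $m$ and $k$ directly, whereas the paper first normalises to $-q<p\leq 0$ and then adds an integer shift.
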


\begin{proof}
  As shown in Lemma \ref{lem:kappa values}, if $0\leq k<\ell$ and  $\zeta = \theta^{(+k)}$, then $\kappa (F) = \frac{k}{1 -
  \ell}$. Similarly, if we replace $\theta$ by $\theta^m$ for some value of
  $m$ and take $\zeta = (\theta^m)^{(+k)}$, the corresponding factor map has
  $\kappa$-value $\frac{k}{1 - \ell^m}$, where we can take $k$ to be any value
  between $0$ and $\ell^m - 1$.
  
  To prove the general statement, suppose first that $- q < p \leqslant 0$, and that $p$ and $q$ have no common divisors. As we assume that $p/q\in \Z_\ell$, then $q$ and $\ell$ are coprime, so that
    $\ell^{\varphi (q)} \equiv 1 \pmod{q}$, where
  $\varphi$ is Euler's totient function. Thus
  $\ell^{\varphi (q)} - 1 = hq$ for some $h > 0$. Since
  $- q < p \leqslant 0$, we have $0 \leqslant - hp < hq < \ell^{\varphi (q)}$, so we can take $\zeta = (\theta^{\varphi (q)})^{( + (-hp))}$,  and the natural conjugacy $F : X_{\zeta} \rightarrow
  X_{\theta}$ will satisfy:
  \[ \kappa (F) = \frac{-hp}{1 - \ell^{\varphi (q)}} =
     \frac{-hp}{-hq} =
     \frac{p}{q}, \]
  as desired. 
  
  For the general case, it is enough to note that $\frac{p}{q} = M
  + \frac{p_0}{q}$ for some integer $M $ and $- q < p_0
  \leqslant 0$, where $p_0$ will also be coprime to $q$, so it suffices to
  take $\zeta = (\theta^{\varphi (q)})^{(+(- hp_0))}$,
  as above, and $\sigma^M \circ F$ as the desired conjugacy.
\end{proof}

Define, for a substitution of trivial height with column number $c$,
 \[ 
\mathcal F_\theta:= \{  w=w_k \ldots w_1 \in (\Z / \ell\Z)^+: 
|\theta_{w_1}\circ \ldots \circ\theta_{w_k}(\mathcal A)|=c   \}.
\]

Note that if $ \mathcal F_\theta$ contains a length $k$ word, then $\theta^k$ has a column with $c$ elements.
\begin{lem}\label{lem:minimal}
Let $\theta$ be a constant length substitution.
Let $(\eta,\beta)$ be an inner encoding of $\theta$. The length $j(\eta)$ of the smallest word in $\Ff_\eta$ is bounded by the length $j(\theta)$ of the smallest word in $\Ff_\theta$.
\end{lem}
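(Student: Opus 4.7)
The plan is to leverage the algebraic machinery developed earlier in the section: an inner encoding $(\eta,\beta)$ of $\theta$ induces an epimorphism $\Phi_\beta: S_\theta \to S_\eta$, and such an epimorphism carries kernel to kernel by Lemma \ref{lem-epi-ker}. Since the kernel of $S_\theta$ consists exactly of the rank-minimal elements (Corollary \ref{cor-cp-s}), this is enough to translate a rank-minimising word for $\theta$ into a rank-minimising word for $\eta$ of no greater length.

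First I would set up $\Phi_\beta$ carefully. The iterated intertwining relation $\beta\circ(\theta_{w_1}\circ\cdots\circ\theta_{w_k}) = (\eta_{w_1}\circ\cdots\circ\eta_{w_k})\circ\beta$, combined with the surjectivity of $\beta$, shows that the assignment $\theta_{w_1}\circ\cdots\circ\theta_{w_k} \mapsto \eta_{w_1}\circ\cdots\circ\eta_{w_k}$ yields a well-defined surjective homomorphism $\Phi_\beta: S_\theta \to S_\eta$. Well-definedness is the one point deserving care: if two different words encode the same element of $S_\theta$, then the corresponding compositions of $\eta$-maps agree on $\beta(\mathcal A_\theta) = \mathcal A_\eta$, and so they agree as elements of $S_\eta$.

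Next, I would invoke Lemma \ref{lem-epi-ker} to conclude that $\Phi_\beta$ restricts to an epimorphism $\ker S_\theta \twoheadrightarrow \ker S_\eta$. Combined with Corollary \ref{cor-cp-s}, which identifies the kernel with the set of rank-minimal maps, this says: if $f \in S_\theta$ has rank $c(\theta)$, then $\Phi_\beta(f) \in \ker S_\eta$, and hence has rank $c(\eta)$.

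Finally I would apply this to a word $w = w_{j(\theta)}\ldots w_1 \in \mathcal F_\theta$ of minimal length. The element $f := \theta_{w_1}\circ\cdots\circ\theta_{w_{j(\theta)}}$ lies in $\ker S_\theta$, so its image $\eta_{w_1}\circ\cdots\circ\eta_{w_{j(\theta)}} = \Phi_\beta(f)$ lies in $\ker S_\eta$ and therefore has rank $c(\eta)$. Hence $w \in \mathcal F_\eta$, and so $j(\eta) \le |w| = j(\theta)$. No serious obstacle appears; the lemma is essentially a direct corollary of the kernel-preservation property of semigroup epimorphisms. The one point to keep in mind is that in general $c(\eta)$ may be strictly smaller than $c(\theta)$, so a naive rank comparison at the level of individual maps is not sufficient and the argument must genuinely pass through the kernels of the two semigroups.
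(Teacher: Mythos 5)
Your proof is correct and follows essentially the same route as the paper: both arguments hinge on the epimorphism $\Phi_\beta(f)=\beta f\beta^{-1}$ induced by the inner encoding together with Lemma \ref{lem-epi-ker}, which sends $\ker S_\theta$ onto $\ker S_\eta$, so that a word witnessing minimal rank for $\theta$ also witnesses minimal rank for $\eta$. The paper phrases the conclusion via the identity $U_\eta=\beta(U_\theta)$ of minimal sets rather than via ranks, but this is only a cosmetic difference, and your closing remark that one must compare kernels rather than the (possibly different) numerical ranks $c(\theta)$ and $c(\eta)$ is exactly the right point of care.
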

\begin{proof} Let $U_\theta = \{\im f:f\in \ker S_\theta\}$. We first show that $U_\eta = \beta(U_\theta)$. Indeed, by Lemma~\ref{lem-epi-ker} we have $\ker S_\eta = \beta(\ker S_\theta)\beta^{-1}$ and hence
$$U_\eta = \{\im f:f\in \ker S_\eta\} = \{\im \beta\circ g : g \in \ker S_\theta\}.$$
By definition, $w_k \ldots w_1$ is a word in $\Ff_\theta$ if and only if 
$\theta_{w_1}\circ \ldots \circ\theta_{w_k}(\mathcal A)\in U_\theta$. Now, by the above, 
$$\eta_{w_1}\circ \ldots \circ\eta_{w_k}(\mathcal B)= \beta \circ\theta_{w_1}\circ \ldots  \circ\theta_{w_k}(\mathcal A)\in \beta(U_\theta) = U_\eta.$$  
This implies $\Ff_\theta\subset\Ff_\eta$ and hence $j(\eta)\leq j(\theta)$.
\end{proof}

\begin{lem}\label{lem:kappa-rational}
Let $\theta : \mathcal{A} \rightarrow \mathcal{A}^{\ell}$ be an aperiodic primitive  length-$\ell$ substitution of trivial height,
and let 
 $F:X_\theta\rightarrow X_\zeta$ be a factor map.
If   $\mathcal F_\theta$ contains a  word of length $j$, then $n\kappa (F) \in \Z$ for some $0\leq n\leq (\ell -1) ( \ell^{j}-1)$.
\end{lem}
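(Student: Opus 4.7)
My plan is to bound the denominator of $\kappa(F)$ by first reducing $F$ to a conjugacy and then exploiting the arithmetic of $k$-shifted substitutions provided by Lemma~\ref{lem:kappa values}.

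First, I would use Corollary~\ref{cor:factor-bij} to write $F = H \circ F_{\beta_\tau} \circ F_\iota^{-1}$, where $F_\iota : X_{(\theta^{(n)})^p} \to X_\theta$ is the natural conjugacy from a collared-and-powered $\theta$, $F_{\beta_\tau} : X_{(\theta^{(n)})^p} \to X_\eta$ is an inner encoding, and $H : X_\eta \to X_\zeta$ is a conjugacy. By Lemma~\ref{lem:right-kappa-value} and Lemma~\ref{lem:code-factor-kappa}, both $F_\iota$ and $F_{\beta_\tau}$ preserve the fixed point fibre, so $\kappa(F) = \kappa(H)$. Moreover, by Lemma~\ref{lem:minimal} the hypothesis that $\mathcal F_\theta$ contains a word of length $j$ transfers to $\mathcal F_\eta$; hence it is enough to bound the denominator of $\kappa$ for a conjugacy $H$ between two substitution shifts enjoying the same column property.

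Second, I would apply Lemma~\ref{lem:kappa values} twice to produce conjugacies with controlled $\kappa$-values. Applied to $\eta$ (length $\ell$) and to $\eta^j$ (length $\ell^j$), the lemma gives, for each admissible shift $k$, a conjugacy with $\kappa$-value $k/(1-\ell)$ or $k/(1-\ell^j)$ respectively. Combined with integer translations by $\sigma^M$, these generate a subgroup $G \leq \mathbb Q/\Z$ of order dividing $\operatorname{lcm}(\ell-1,\ell^j-1)$, which in turn divides $(\ell-1)(\ell^j-1)$. Any composition of $H$ with such conjugacies or integer shifts produces a new factor map whose $\kappa$-value differs from $\kappa(H)$ by an element of $G$.

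Third, and this is the main obstacle, I would show that some $H'$ obtained from $H$ in this way must preserve the fixed point fibre, thereby forcing $\kappa(H) \in G$. The word $w \in \mathcal F_\theta$ of length $j$ yields a column of $\theta^j$ of minimal rank $c$ at a specific position $W$, inherited by $\eta^j$ via the inner encoding; combined with the $W$-shift conjugacy from Lemma~\ref{lem:kappa values}, this should allow $\kappa(H)$ to be cancelled modulo $G$ by producing a compatible almost injective factor map from $X_{(\eta^j)^{(+W)}}$ onto $X_\zeta$ that preserves fixed points. The subtle point is to realise this cancellation using only the specific shifts encoded in $G$, rather than the arbitrary rational $\kappa$-values provided by Corollary~\ref{cor:all kappa values}; this is precisely where the hypothesis on $\mathcal F_\theta$ enters, via the availability of the rank-$c$ column at position $W$. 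Once $\kappa(F) = \kappa(H) \in G$, we obtain $n\kappa(F) \in \Z$ with $n = |G| \leq (\ell-1)(\ell^j-1)$.
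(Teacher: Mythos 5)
Your first step --- reducing to a conjugacy --- is correct and is exactly what the paper does: via Proposition \ref{prop:radius-1} and Theorem \ref{th:factor-bijective} one writes $F$ as a fixed-point-fibre-preserving map followed by a conjugacy $H=F_{\tau'}$, so $\kappa(F)=\kappa(H)$ by additivity of $\kappa$, and Lemma \ref{lem:minimal} transfers the hypothesis on $\mathcal F_\theta$ to the inner encoding. At that point, however, the paper simply cites \cite[Proposition 3.24]{coven-quas-yassawi}, which asserts the denominator bound $(\ell-1)(\ell^j-1)$ for a \emph{conjugacy}. You instead try to re-derive that bound from Lemma \ref{lem:kappa values} and Corollary \ref{cor:all kappa values}, and this is where your argument breaks down.

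The gap is in your third step, and it is not merely a missing detail: the strategy is circular. Composing $H$ with conjugacies whose $\kappa$-values lie in the subgroup $G$ (and with powers of $\sigma$) only ever changes $\kappa(H)$ by elements of $G+\Z$. Hence such a composition can have $\kappa$-value $0$ --- i.e.\ preserve the fixed point fibre --- \emph{only if} $\kappa(H)\in G+\Z$ already, which is precisely what you are trying to prove. Having a large supply of conjugacies with prescribed rational $\kappa$-values (that is all Lemma \ref{lem:kappa values} and Corollary \ref{cor:all kappa values} give you) puts no constraint whatsoever on $\kappa(H)$. What is needed is an \emph{independent} arithmetic constraint on $\kappa(H)$, and the only source for one in this circle of ideas is Theorem \ref{thm:ethan}(2): the set of irregular fibres of $X_\zeta$ must be contained in the $\kappa(H)$-translate of that of $X_\eta$, and the word $w\in\mathcal F_\eta$ of length $j$ pins down where those irregular fibres sit inside $\Z_\ell$ (as rationals with controlled denominators). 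You gesture at ``the rank-$c$ column at position $W$'' but never extract such a constraint, so the key inclusion $\kappa(H)\in G+\Z$ is asserted, not proved. A further warning sign: since $(\ell-1)\mid(\ell^j-1)$, your group $G$ has order dividing $\operatorname{lcm}(\ell-1,\ell^j-1)=\ell^j-1$, so if your argument were complete it would yield the strictly stronger bound $n\le \ell^j-1$; the fact that the lemma (following \cite{coven-quas-yassawi}) only claims the product $(\ell-1)(\ell^j-1)$ indicates that the true constraint on $\kappa(H)$ does not come from the group generated by shift-conjugacy $\kappa$-values. Either cite \cite[Proposition 3.24]{coven-quas-yassawi} for the conjugacy case, as the paper does, or supply the irregular-fibre argument in full.
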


\begin{proof}

In the case where the factor map is a conjugacy, the statement is \cite[Proposition 3.24]{coven-quas-yassawi}.
Otherwise, we can suppose that $F=F_\tau$ where $\tau:\mathcal A_\theta\rightarrow \mathcal A_{\zeta}$ (invoking the usual combination of Proposition \ref{prop:radius-1} and Theorem \ref{th:factor-bijective}  to reduce to this case).

 Theorem \ref{th:factor-bijective} tells us that there is a pair $(\eta, \beta)$ which is inner encoded by $\theta$, and a code $\tau':\mathcal A_{\eta}\rightarrow \mathcal A_{\zeta} $ such that 
$F_{\tau '}:X_\eta\rightarrow X_{\zeta}$ is a conjugacy,  and $\kappa(F_\tau)= \kappa(F_{\tau'})$. Then again by \cite[Proposition 3.24]{coven-quas-yassawi}, the denominator of the reduced form of $\kappa(F_{\tau'})$ is at most $(\ell -1) ( \ell^{j}-1)$, where $j=j(\eta)$ is the length of the smallest word in $\mathcal F_\eta$.  Now by Lemma \ref{lem:minimal}, we have that $j(\eta)\leq j(\theta)$ . Thus
the denominator of $\kappa(F_{\tau'})$ is at most $(\ell -1) ( \ell^{j(\theta)}-1)$. Since $\kappa(F_{\tau'})= \kappa(F_\tau)$, the result follows.
\end{proof}

\begin{thm}\label{thm-bijective-3}
Let $\theta$ be an aperiodic primitive constant length-$\ell$ substitution with column number $c > 1$ and trivial height. The following are equivalent:
\begin{enumerate} 
\item $X_\theta$ factors onto an aperiodic bijective length-$\ell$ substitution shift whose alphabet has c letters and such that the factor map is almost everywhere one-to-one.
\item There exists $0\leq n,k$ such that the semigroup $S_{(\theta^n)^{(+k)}}$  contains a unique minimal left ideal. \\ Moreover, if $j$ is the length of the smallest word in $\mathcal F_\theta$, then $n\leq (\ell -1)(\ell^j -1)-1$ and $0\leq k \leq \ell^n$.
\end{enumerate}

\end{thm}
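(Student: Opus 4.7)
The plan is to extend Theorem~\ref{thm-main-bijective-2}, which handles factor maps preserving the fixed point fibre, to the general case by absorbing the $\kappa$-value of an arbitrary factor map into the natural conjugacy between $X_\theta$ and $X_{(\theta^n)^{(+k)}}$. The key observation is that this natural conjugacy has a specific, tunable $\kappa$-value, as computed in Lemma~\ref{lem:kappa values} and Corollary~\ref{cor:all kappa values}.

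For the implication $(2) \Rightarrow (1)$, I would apply Theorem~\ref{thm-main-bijective-1} to $(\theta^n)^{(+k)}$: since its semigroup has a unique minimal left ideal, there exists a bijective substitution $\mu$ on a $c$-letter alphabet inner-encoding it, which yields a factor map $X_{(\theta^n)^{(+k)}} \to X_\mu$. Composing with the natural topological conjugacy $X_\theta \to X_{(\theta^n)^{(+k)}}$ produces the desired factor map $X_\theta \to X_\mu$, and aperiodicity of $\mu$ together with almost-injectivity of the composed map follow from Theorem~\ref{thm-main-bijective-2-a.e.}.

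For the implication $(1) \Rightarrow (2)$, let $F : X_\theta \to X_\eta$ be a factor map onto an aperiodic bijective substitution shift. By Proposition~\ref{prop:upper_bound_conjugacy} and Lemma~\ref{lem:kappa-rational}, $\kappa(F) = p/q$ in lowest terms with $\gcd(q,\ell)=1$ and $q \leq (\ell-1)(\ell^j-1)$. After post-composing $F$ with a suitable power of $\sigma$, I may assume $0 \leq p < q$. The construction underlying Corollary~\ref{cor:all kappa values} then yields, for $n := \varphi(q)$ and $k := p(\ell^n-1)/q$, a conjugacy $G : X_{(\theta^n)^{(+k)}} \to X_\theta$ with $\kappa(G) = -\kappa(F)$. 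Consequently $F \circ G$ has $\kappa$-value zero, i.e.\ preserves the fixed point fibre; Theorem~\ref{thm-main-bijective-2} applied to $(\theta^n)^{(+k)}$ then forces $S_{(\theta^n)^{(+k)}}$ to have a unique minimal left ideal.

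For the quantitative bounds, $n = \varphi(q) \leq q-1 \leq (\ell-1)(\ell^j-1) - 1$, and $0 \leq k < \ell^n$ since $0 \leq p < q$. The main obstacle I anticipate is the length mismatch in the $(1) \Rightarrow (2)$ direction: Theorem~\ref{thm-main-bijective-2} is framed for factor maps between substitutions of equal length, whereas $F \circ G$ goes from a length-$\ell^n$ substitution to a length-$\ell$ bijective substitution shift. This is resolved by replacing $\eta$ with $\eta^n$, a length-$\ell^n$ bijective substitution on the same alphabet with $X_{\eta^n} = X_\eta$, which restores length agreement without altering the shift space or bijectivity; a subsidiary concern is checking that shift post-composition does not spoil almost-injectivity, but this is automatic since powers of $\sigma$ are conjugacies of $X_\eta$.
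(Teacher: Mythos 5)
Your proposal is correct and follows essentially the same route as the paper: reduce to the fixed-point-fibre-preserving case of Theorem~\ref{thm-main-bijective-2} by composing with a shifted-power conjugacy whose $\kappa$-value (via Lemma~\ref{lem:kappa values} and Corollary~\ref{cor:all kappa values}) cancels the rational $\kappa(F)$ bounded by Lemma~\ref{lem:kappa-rational}, then read off the bounds on $n$ and $k$ from $\varphi(q)\leq q-1$. Your explicit handling of the length mismatch by passing to $\eta^n$ is a point the paper's own proof leaves implicit, but it does not change the argument.
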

\begin{proof} Let $F:(X_\theta,\sigma)\to (X_\eta,\sigma)$ where $\eta$ is bijective, defined on a $c$-letter alphabet. By 
Proposition \ref{prop:upper_bound_conjugacy},
$\kappa(F)$ is  rational, and by Lemma \ref{lem:kappa-rational}, $\kappa(F)=p/q$ where $1\leq q\leq (\ell -1)(\ell^j -1)$, where $j$ is the length of the smallest word in $\mathcal F_\eta$. Using $\varphi(q) \leq  (\ell -1)(\ell^j -1)-1$, and taking a shift if necessary,
 Corollary \ref{cor:all kappa values} tells us that there is $n\leq (\ell -1)(\ell^j -1)-1$ and $0\leq k \leq \ell^{n}$, and a conjugacy $\tilde F :(X_{(\theta^n)^{(+k)}},\sigma)\to (X_\theta,\sigma)$   such that      
$\kappa(F)+\kappa(\tilde F)\in \Z$. As $\kappa(         F\circ \tilde F)\in \Z$,  then $S_{(\theta^n)^{(+k)}}$ must have a unique minimal left ideal by Theorem~\ref{thm-main-bijective-2}.

For the converse, if $S_{(\theta^n)^{(+k)}}$ has a unique minimal left ideal, then by Theorem \ref{thm-main-bijective-2}, $(X_{(\theta^n)^{(+k)}},\sigma)$
factors dynamically onto a bijective substitution shift. As $(X_{(\theta^n)^{(+k)}},\sigma)$ is conjugate to $(X_\theta, \sigma)$, the result follows.
\end{proof}

\begin{example}
We return to Example \ref{ex:positive-kappa-value}, with $\theta$ and $\eta$ defined there.
We take $\zeta:= \theta^{(+3)}$, and we claim that $\zeta$ has a unique minimal ideal. The reason why we make this choice $\theta^{(+3)}$ is that, by Lemma  \ref{lem:kappa values},  the natural conjugacy $G: X_\zeta \rightarrow X_\theta$ satisfies $\kappa(G) = -3/4$.  
Then  $\kappa(\sigma \circ F)= 3/4$,
and $F\circ \sigma\circ G: X_\zeta\rightarrow X_\eta$ has $\kappa$-value zero. 
Note though that $F \circ \sigma \circ G $ has right  radius one, so we will need to work with $X_{\zeta^{(2)}}$. Theorem \ref{thm-main-bijective-2}
 guarantees that $\mathcal S_{\zeta^{(2)}}$ has a unique minimal left ideal, and Lemma \ref{lem:semigroup-collaring}
  guarantees that so also does $\mathcal S_{\zeta}$.

To define $\zeta$, we  simultaneously list and code the  words of length two in $\mathcal L_\theta$:
\begin{align*}
A&=02, \hspace{5em}
B=03, \\
C&=14,
\hspace{5em}D=15,\\
E&=20,
\hspace{5em}F=21,\\
G&=34,
\hspace{5em}H=35,\\
I&=40,
\hspace{5em}J=41,\\
K&=52,
\hspace{5em}L=53;
\end{align*}
it then can be verified that the 3-shifted extension  $\zeta:=\theta^{(+3)}$ of $\theta$ is given by
\begin{align*}
\zeta: A &\mapsto BGJDK \\
B &\mapsto BGJDL \\
C &\mapsto CIAFC \\
D &\mapsto CIAFD \\
E &\mapsto EBHKE \\
F &\mapsto EBHKF\\
G &\mapsto GIAFC \\
H &\mapsto GIAFD \\
I &\mapsto IBHKE\\
J &\mapsto IBHKF \\
K &\mapsto LGJDK \\
L &\mapsto LGJDL
\end{align*}
\end{example}

\begin{example}\label{ex:exhaustive}
              Consider the following length $3$ substitution:
              \begin{align*}
                             \theta\colon a & \mapsto a b f\\
                             b & \mapsto a e f\\
                             c & \mapsto a b f\\
                             d & \mapsto d e c\\
                             e & \mapsto d b c\\
                             f & \mapsto d e c.
              \end{align*}
        This substitution has column number $2$, trivial height, is primitive, and it has two left minimal ideals, associated to the partitions $\mathcal{P}_{\theta_0}=\mathcal{P}_{\theta_2}=\{\{a,b,c\},\{d,e,f\}\}$ and $\mathcal{P}_{\theta_1} = \{\{a,c,e\},\{b,d,f\}\}$. In fact, this substitution is \emph{quasi-bijective} (see \cite{Lemanczyk-Muellner}) in the sense that every element in $S_\theta$ (bar the identity map) has rank $2$, and thus the only irregular fibre of this substitution corresponds to the set of fixed points. Thus there  are forbidden words of length one, and $j=1$.

              Theorem~\ref{thm-bijective-3} tells us that, if for every $0\le n\le (\ell-1)(\ell^j-1)-1$ and $0\le k <\ell^n$ the semigroup $S_{(\theta^n)^{(+k)}}$ has more than one left minimal ideal, then the shift $X_\theta$ has no non-trivial bijective factors; here, $j = 1$, so the bounds for $n$ and $k$ are $3$ and $27$, respectively.

              Via automated computation, we can determine each of the finitely many substitutions $(\theta^n)^{(+k)}$, e.g. for $n=3$ and $k=11$ we get the following:
 
              \begin{align*}
                a_{b} & \mapsto f_{d} d_{b} b_{c} c_{d} d_{e} e_{c} c_{d} d_{e} e_{c} c_{d} d_{b} b_{c} c_{a} a_{b} b_{f} f_{a} a_{b} b_{f} f_{a} a_{e} e_{f} f_{d} d_{e} e_{c} c_{d} d_{e} e_{c}\\
                a_{e} & \mapsto f_{d} d_{b} b_{c} c_{d} d_{e} e_{c} c_{d} d_{e} e_{c} c_{d} d_{b} b_{c} c_{a} a_{b} b_{f} f_{d} d_{e} e_{c} c_{d} d_{b} b_{c} c_{a} a_{b} b_{f} f_{a} a_{b} b_{f}\\
                 b_{c} & \mapsto c_{a} a_{e} e_{f} f_{a} a_{b} b_{f} f_{d} d_{e} e_{c} c_{d} d_{b} b_{c} c_{a} a_{b} b_{f} f_{a} a_{b} b_{f} f_{a} a_{e} e_{f} f_{d} d_{e} e_{c} c_{a} a_{b} b_{f}\\
                 b_{f} & \mapsto c_{a} a_{e} e_{f} f_{a} a_{b} b_{f} f_{d} d_{e} e_{c} c_{d} d_{b} b_{c} c_{a} a_{b} b_{f} f_{d} d_{e} e_{c} c_{d} d_{b} b_{c} c_{a} a_{b} b_{f} f_{d} d_{e} e_{c}\\
                  c_{a} & \mapsto f_{d} d_{b} b_{c} c_{d} d_{e} e_{c} c_{d} d_{e} e_{c} c_{d} d_{b} b_{c} c_{a} a_{b} b_{f} f_{a} a_{b} b_{f} f_{a} a_{e} e_{f} f_{d} d_{e} e_{c} c_{a} a_{b} b_{f}\\
                  c_{d} & \mapsto f_{d} d_{b} b_{c} c_{d} d_{e} e_{c} c_{d} d_{e} e_{c} c_{d} d_{b} b_{c} c_{a} a_{b} b_{f} f_{d} d_{e} e_{c} c_{d} d_{b} b_{c} c_{a} a_{b} b_{f} f_{d} d_{e} e_{c}\\
                  d_{b} & \mapsto c_{a} a_{e} e_{f} f_{a} a_{b} b_{f} f_{a} a_{b} b_{f} f_{a} a_{e} e_{f} f_{d} d_{e} e_{c} c_{a} a_{b} b_{f} f_{a} a_{e} e_{f} f_{d} d_{e} e_{c} c_{d} d_{e} e_{c}\\
                  d_{e} & \mapsto c_{a} a_{e} e_{f} f_{a} a_{b} b_{f} f_{a} a_{b} b_{f} f_{a} a_{e} e_{f} f_{d} d_{e} e_{c} c_{d} d_{e} e_{c} c_{d} d_{b} b_{c} c_{a} a_{b} b_{f} f_{a} a_{b} b_{f}\\
                  e_{c} & \mapsto f_{d} d_{b} b_{c} c_{d} d_{e} e_{c} c_{a} a_{b} b_{f} f_{a} a_{e} e_{f} f_{d} d_{e} e_{c} c_{a} a_{b} b_{f} f_{a} a_{e} e_{f} f_{d} d_{e} e_{c} c_{a} a_{b} b_{f}\\
                             e_{f} & \mapsto f_{d} d_{b} b_{c} c_{d} d_{e} e_{c} c_{a} a_{b} b_{f} f_{a} a_{e} e_{f} f_{d} d_{e} e_{c} c_{d} d_{e} e_{c} c_{d} d_{b} b_{c} c_{a} a_{b} b_{f} f_{d} d_{e} e_{c}\\
                               f_{a} & \mapsto c_{a} a_{e} e_{f} f_{a} a_{b} b_{f} f_{a} a_{b} b_{f} f_{a} a_{e} e_{f} f_{d} d_{e} e_{c} c_{a} a_{b} b_{f} f_{a} a_{e} e_{f} f_{d} d_{e} e_{c} c_{a} a_{b} b_{f}\\
                                    f_{d} & \mapsto c_{a} a_{e} e_{f} f_{a} a_{b} b_{f} f_{a} a_{b} b_{f} f_{a} a_{e} e_{f} f_{d} d_{e} e_{c} c_{d} d_{e} e_{c} c_{d} d_{b} b_{c} c_{a} a_{b} b_{f} f_{d} d_{e} e_{c}\\
              \end{align*}
 where each symbol $u_v$ corresponds to a $2$-letter word in $\theta$. Computing the corresponding semigroup for each, we observe that in every case it has between $2$ and $4$ minimal left ideals; for example, in the above case there are four minimal left ideals given by the following partitions:

              \begin{align*}
                             \mathcal{P}_1 &= \{\{b_f,e_f, e_c, f_a, c_d,c_a \},\{f_d,d_b,d_e,b_c,a_b,a_e\}\}\\
                             \mathcal{P}_2 &= \{\{b_f,f_d, d_e, e_c, f_a,a_e \},\{d_b,e_f,b_c,a_b,c_d,c_a\}\}\\
                             \mathcal{P}_3 &= \{\{b_f,e_f, e_c, a_b, a_e,c_a \},\{f_d,d_b,d_e,b_c,f_a,c_d\}\}\\
                             \mathcal{P}_4 &= \{\{b_f,e_f, d_e, b_c, a_b,c_d \},\{f_d,d_b,e_c,f_a,a_e,c_a\}\}.
              \end{align*}

              Hence, $X_\theta$ cannot factor onto any bijective substitution. \end{example}

 We end with an example that shows that  if a substitution shift has a bijective shift factor, this does not imply that its pure base has a bijective shift factor.
   \begin{example}\label{ex:bijective-height}

                             Consider the following substitution on $\{a,\bar{a},b,c,d,e,f\}$:

                             \begin{align*}\theta\colon a & \mapsto a d c\\
                                           \bar{a} & \mapsto \bar{a} d c\\
                                           b & \mapsto b e a\\
                                           c & \mapsto c f b\\
                                           d & \mapsto d \bar{a} e\\
                                           e & \mapsto e b f\\
                                           f & \mapsto f c d.                                        
                             \end{align*}

                             It is routine to check that this substitution is primitive and has height $2$.
                                                        We can define a  factor map $F_\tau:X_\theta \rightarrow X_\eta$, 
                             which has local rule $\tau: \{a,\bar{a},b,c,d,e,f\}\rightarrow \{a,b,c,d,e,f\}$, with $\tau(a)= \tau(\bar{a})$, and $\tau$ is the identity otherwise; $\eta$ is the resulting inner encoding.
                              It can be verified that $\eta$ is bijective.  As such, we can check that $S_\theta$ has indeed a unique minimal left ideal, satisfying the conditions of Theorem~\ref{thm-main-bijective-1}.

                             Given that both $\theta$ and $\eta$ have height $2$, one may proceed as in the proof of Proposition~\ref{lem:almost-automorphic-height.} and define a factor map between the pure bases $\tilde{\theta}$ and $\tilde{\eta}$ of both substitutions, but, in general, the property of being bijective is {\em not} a conjugacy invariant. In fact, $\tilde{\eta}$ is not a bijective substitution.  Furthermore, it cannot be taken to be conjugate to one: it can be verified that  the maximal equicontinuous factor map of $X_{\tilde{\eta}}$ has two irregular fibres modulo $\mathbb{Z}$, namely  $0$ and $\cdots 1,1,1 = -1/2$, but any subshift that is conjugate to one arising from a bijective substitution can only have one irregular fibre modulo $\mathbb{Z}$.

                             Theorem \ref{thm-bijective-3} tells us that to decide whether $\tilde{\theta}$ has a bijective factor, we need to compute the minimal left  ideals of $(\tilde \theta^n)^{(+k)}$ with $1\leq n \leq 3$ and $0\leq k \leq 27$. Via automated computation, we have verified that $\tilde{\theta}$ has no bijective substitution shift factors. (See Example \ref{ex:exhaustive} for one example of such a computation.)
                                        \end{example}

{\footnotesize
\bibliographystyle{abbrv}

\def\ocirc#1{\ifmmode\setbox0=\hbox{$#1$}\dimen0=\ht0 \advance\dimen0
  by1pt\rlap{\hbox to\wd0{\hss\raise\dimen0
  \hbox{\hskip.2em$\scriptscriptstyle\circ$}\hss}}#1\else {\accent"17 #1}\fi}

}

\end{document}